\documentclass[10pt]{amsart}

\usepackage{subcaption} 
\usepackage{mathscinet}
\usepackage[utf8]{inputenc} 
\usepackage{tikz} 
\usepackage{tikz-cd} 
\usepackage{amssymb, amsmath, mathtools, amsfonts, amsthm} 
\usepackage{thmtools, thm-restate} 

\usepackage{graphicx}
\usepackage{fullpage}
\usepackage{caption} 
\usepackage{latexsym} 
\usepackage[inline,shortlabels]{enumitem}
\setitemize{itemsep=4pt}
\setenumerate{itemsep=3pt}
\usepackage{multicol} 
\usepackage{color}
\usepackage{tcolorbox}
\usepackage{setspace}

\usepackage[hypertexnames=false]{hyperref}
\definecolor{headercolor}{RGB}{255,255,240}
\definecolor{mycitecolor}{RGB}{169,169,169}
\definecolor{mylinkcolor}{rgb}{1.0, 0.0, 0.5}
\definecolor{myurlcolor}{rgb}{1.0, 0.0, 0.5}
\usepackage{colortbl}

\hypersetup{colorlinks=true,urlcolor=myurlcolor,
citecolor=mycitecolor,linkcolor=mylinkcolor,
linktoc=page,breaklinks=true}

\usepackage{url} 
\usepackage{cleveref} 
\usepackage[numbers]{natbib} 
\usepackage{indentfirst} 
\usepackage[mathscr]{eucal} 
\usepackage{colonequals} 
\usepackage{soul} 
\usepackage{marvosym} 

\newcounter{counter}[section]

\numberwithin{equation}{section}
\newtheorem{theorem}[counter]{Theorem}
\newtheorem{lemma}[counter]{Lemma}
\newtheorem{corollary}[counter]{Corollary}

\newtheorem{proposition}[counter]{Proposition}

\theoremstyle{definition}
\newtheorem{definition}[counter]{Definition}
\newtheorem{example}[counter]{Example} 

\theoremstyle{remark} 

\newtheorem{remark}[counter]{Remark}

\newcommand{\cdef}[1]{\textsf{{\color{blue}#1}}} 
  
\newcommand{\RR}{\mathbf{R}} 
\newcommand{\ZZ}{\mathbf{Z}} 
\newcommand{\QQ}{\mathbf{Q}} 
\newcommand{\CC}{\mathbf{C}} 
\newcommand{\DD}{\mathbb{D}} 
\newcommand{\OO}{\mathscr{O}}
\newcommand{\SnC}[2]{\mathcal{S}(#1/#2)} 
\newcommand{\hatK}{\SnC{K}{\QQ}}
\newcommand{\hatKf}[1]{\mathcal{S}\paren{#1; \QQ}}
\newcommand{\hatKfm}[2]{\mathcal{S}^{#2}\paren{#1; \QQ}}
\newcommand{\T}{T} 
\renewcommand{\AA}{\mathbf{A}} 
\newcommand{\PP}{\mathbf{P}}
 
\newcommand{\Qbar}{\bar{\QQ}}
\newcommand{\FF}{\mathcal{F}} 
\newcommand{\FD}{\mathcal{F}} 
\newcommand{\HH}{\mathbf{H}} 
\renewcommand{\SS}{\mathbf{S}}

\newcommand{\Pone}{\PP^1}

\newcommand{\md}{\,\operatorname{mod}\,} 
\newcommand{\dd}{\,\mathrm{d}} 

\newcommand{\paren}[1]{\left( #1 \right)}
\newcommand{\brk}[1]{\left\lbrace #1 \right\rbrace} 
 
\newcommand{\abs}[1]{\left| #1 \right|}

\newcommand{\Ht}{\mathrm{Ht}}

\newcommand{\mfor}{\text{ for }} 

\newcommand{\PGL}{\mathrm{PGL}}
\newcommand{\PSL}{\mathrm{PSL}}

\newcommand{\GL}{\mathrm{GL}} 

\let\originalleft\left \let\originalright\right
\renewcommand{\left}{\mathopen{}\mathclose\bgroup\originalleft}
  \renewcommand{\right}{\aftergroup\egroup\originalright}

\DeclareMathOperator{\Nm}{Nm} 
\DeclareMathOperator{\Disc}{Disc}

\DeclareMathOperator{\Aut}{Aut} 
\DeclareMathOperator{\Gal}{Gal}

\DeclareMathOperator{\re}{re}
\DeclareMathOperator{\im}{im}

\DeclareMathOperator{\Spec}{Spec}

\DeclareMathOperator{\sgn}{sgn} 
\DeclareMathOperator{\Sym}{Sym}

\DeclareMathOperator{\vol}{vol}

\definecolor{santicolor}{rgb}{1.0, 0.0, 0.16}
\definecolor{rlocolor}{rgb}{0.0, 0.45, 0.73}
\definecolor{arulcolor}{rgb}{0.0, 0.45, 0.73}
\definecolor{ilacolor}{rgb}{0.24, 0.7, 0.44} 
\definecolor{aslcolor}{rgb}{0.67, 0.38, 0.8}
\definecolor{willcolor}{rgb}{0.25, 0.4, 0.96}
\definecolor{fabcolor}{rgb}{0.0, 0.8, 0.8}
\definecolor{kevincolor}{rgb}{0.5, 1.0, 0.83}

\date{\today}
\title{Counting number fields of fixed degree by their smallest defining polynomial}

\author{Santiago Arango-Pi{\~n}eros} \address{Department of Mathematics,
University of Massachusetts Amherst, Amherst, MA 01003, USA}
\email{santiago.arango.pineros@gmail.com}
\urladdr{\url{https://sarangop1728.github.io/}}

\author{Fabian Gundlach} \thanks{FG was supported by the Deutsche
Forschungsgemeinschaft (DFG, German Research Foundation) --- Project-ID 491392403
--- TRR 358 (Project A4).} \address{Universität Paderborn, Fakultät EIM, Institut
für Mathematik, Warburger Str.~100, 33098 Paderborn, Germany.}
\email{fabian.gundlach@uni-paderborn.de}
\urladdr{\url{https://fabiangundlach.org/}}

\author{Robert J. Lemke Oliver} \thanks{RJLO was partially supported by NSF
grant DMS-2200760 and by the Office of the Vice Chancellor for Research at the
University of Wisconsin-Madison with funding from the Wisconsin Alumni Research
Foundation.} \address{Department of Mathematics, University of
Wisconsin-Madison, 480 Lincoln Dr, Madison, WI 53706}
\email{lemkeoliver@wisc.edu}
\urladdr{\url{https://lemkeoliver.github.io/}}

\author{Kevin J. McGown} \address{Department of Mathematics, California State
University Chico, Chico, California, 95929-0525} \email{kmcgown@csuchico.edu}
\urladdr{\url{https://kmcgown.yourweb.csuchico.edu}}

\author{Will Sawin} \address{Department of Mathematics, Princeton University,
Fine Hall, Washington Road, Princeton, NJ 08540}
\email{wsawin@math.princeton.edu} \urladdr{\url{https://williamsawin.com/}}
\thanks{WS was partially supported by NSF grant DMS-2502029 and a Sloan
Research Fellowship.}

\author{Allechar Serrano L\'opez} \address{Department of Mathematical Sciences,
Montana State University, Bozeman, MT 59717}
\email{allechar.serranolopez@montana.edu} \urladdr{\url{https://allechar.org/}}
\thanks{ASL was supported by NSF grant DMS-2418528.}

\author{Arul Shankar} \address{Department of Mathematics, University of
Toronto, Toronto, ON, Canada} \email{ashankar@math.toronto.edu}
\urladdr{\url{https://www.math.utoronto.ca/ashankar/}}

\author{Ila Varma} \address{Department of Mathematics, University of Toronto,
Toronto, ON, Canada} \email{ila@math.toronto.edu}
\urladdr{\url{https://www.math.toronto.edu/~ila/}}

\begin{document}

\begin{abstract} When do two irreducible polynomials with integer coefficients
  define the same number field? One can define an action of
  $\GL_2 \times \GL_1$ on the space of polynomials of degree $n$ so that for any two
  polynomials $f$ and $g$ in the same orbit, the roots of $f$ may be expressed
  as rational linear transformations of the roots of $g$; thus, they generate
  the same field. In this article, we show that almost all polynomials of
  degree $n$ with size at most $X$ can only define the same number field as
  another polynomial of degree $n$ with size at most $X$ if they lie in the
  same orbit for this group action. (Here we measure the size of polynomials by
  the greatest absolute value of their coefficients.)

  This improves on work of Bhargava, Shankar, and Wang, who proved a similar
  statement for a positive proportion of polynomials. Using this result, we
  prove that the number of degree $n$ fields such that the smallest polynomial
  defining the field has size at most $X$ is asymptotic to a constant times
  $X^{n+1}$ as long as $n\geq 3$. For $n = 2$, we obtain a precise asymptotic of
  the form $\frac{27}{\pi^2} X^2$. \end{abstract}

\maketitle

\setcounter{tocdepth}{1}
\tableofcontents

\newpage

\section{Introduction}

The most natural way of constructing number fields is to take irreducible
polynomials $f\in\ZZ[x]$, and to adjoin one of their roots to $\QQ$. That is,
setting $K_f=\QQ[x]/(f(x))$. The primitive element theorem implies that every
number field arises in this way, and in fact arises infinitely often. The
degree of the number field $K_f$ obtained from $f$ is equal the degree of $f$.
The most natural height function $\Ht$ on the family of degree $n$ polynomials
with integer coefficients is simply the maximum of the coefficients. Therefore,
for $n\geq 2$, we obtain a natural height function on the family of degree $n$
number fields $K$ by setting $\Ht(K)$ to be the minimum of $\Ht(f)$ as $f$
varies over integer degree $n$ polynomials giving rise to $K$. We call this the
\cdef{$\PP^1$-height of $K$}. In this article, we prove an asymptotic for
$N_n(\Ht \leq X)$, the number of (isomorphism classes) of degree $n$ number fields
$K$ with $\Ht(K)\leq X$.
\begin{theorem}
  \label{thm:main} 
  The asymptotic size of the set of (isomorphism classes of) degree $n$ number
  fields $K/\QQ$ with $\Pone$-height less than $X$ is
  \begin{equation*}
      N_n(\Ht < X) \sim 
      \begin{cases}
          \frac{27}{\pi^2}\cdot X^2, & \text{ if } n = 2, \\
          C_n\cdot X^{n+1}, & \text{ if } n > 2,
      \end{cases}
  \end{equation*}
  for some constants $C_n>0$, as $X\to\infty$.
\end{theorem}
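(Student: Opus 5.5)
We count fields through their minimal polynomials. For each degree $n$ field $K$ with $\Ht(K)<X$, pick the set $P_K(X)$ of irreducible integer polynomials $f$ of degree $n$ with $\Ht(f)<X$ and $K_f\cong K$. Then
\[
N_n(\Ht<X)=\sum_{f\,:\,\Ht(f)<X,\ f \text{ irred.}} \frac{1}{\#P_{K_f}(X)} .
\]
The abstract's main structural result (which I take as the paper's key intermediate theorem) says that for almost all $f$ of height $<X$, every $g$ of height $<X$ with $K_g\cong K_f$ lies in the same $\GL_2(\QQ)\times\GL_1(\QQ)$-orbit as $f$. So for almost all $f$, $\#P_{K_f}(X)$ is the number $m_X(f)$ of integral polynomials of height $<X$ in the orbit of $f$. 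The plan is therefore:
\begin{enumerate}[(i)]
\item invoke that theorem to discard an $o(X^{n+1})$ exceptional set;
\item evaluate $\sum_f 1/m_X(f)$, which counts orbits meeting the box $\{\Ht<X\}$;
\item show the result is $\sim C_nX^{n+1}$.
\end{enumerate}

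For (ii), translate to a lattice-point/geometry-of-numbers count. Write $f$ as a binary form $F(x,y)$ of degree $n$. The group acts by $(\gamma,\lambda)\cdot F=\lambda F((x,y)\gamma)$. An integral $F$ has orbit-mates of height $<X$ of the form $\lambda F\circ\gamma$ with $\gamma\in\GL_2(\QQ)$ and $\lambda\in\QQ^\times$ making the result integral. For $n\ge3$, a generic $F$ of height $\asymp X$ has only boundedly many such mates. Indeed, transforms with $\gamma$ far from $\GL_2(\ZZ)$-scaled, or with denominators, force the height up by a power of a parameter, unless $F$ lies in a thin set (e.g. $F$ nearly divisible by $p$ after a substitution, or near the "cusp"). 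One should define a local/archimedean density: the count of integral primitive $F$ in the box weighted by $1/m_X(F)$. This is
\[
\sum_F \Bigl(\sum_{(\gamma,\lambda)} \mathbf 1[\lambda F\circ\gamma \in \text{box}, \text{integral}]\Bigr)^{-1}.
\]
It converges to $X^{n+1}$ times a product of local densities and a real volume factor. The finite set of relevant $(\gamma,\lambda)$ (signed permutations $x\leftrightarrow y$, $x\mapsto -x$, $\lambda=\pm1$, plus transforms like $x\mapsto x+y$ that can keep height bounded on positive-measure regions, and $p$-adic ones like $F(px,y)/p$ when divisibility conditions hold) is handled by inclusion–exclusion. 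Each configuration contributes $X^{n+1}$ times the volume of a region times a $p$-adic mass. Convergence of the Euler product follows because the $p$-adic transformations require $p^{k}$-divisibility conditions of codimension growing with $n\ge3$. Irreducibility holds for $100\%$ of $F$ by Hilbert irreducibility/Cohen's bound, so it costs nothing. I would define $C_n$ as this convergent sum.

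For $n=2$ the behavior differs, since the group has dimension $5$ while quadratics have $3$ coefficients. Orbits are huge: two quadratics define the same field iff their discriminants agree up to squares. So here I would argue directly. $\QQ(\sqrt d)$ has height $<X$ iff some $ax^2+bx+c$ with $\max(|a|,|b|,|c|)<X$ has $b^2-4ac=dm^2$. Count squarefree $d$ represented this way, show the extremal representatives give $|d|\lesssim$ a region whose size yields $\tfrac{27}{\pi^2}X^2$, with $6/\pi^2$ from squarefreeness and the $27/6\cdot\ldots$ constant from the region of achievable discriminants ($|b^2-4ac|$ ranges up to $5X^2$ in sign-dependent ways) after showing almost every squarefree $d$ in the region is hit.

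The main obstacle is step (i)-(ii) uniformity: controlling polynomials whose orbits contain many small-height members (large stabilizer-like coincidences or $p$-adic transforms for large $p$). These must be shown to contribute $o(X^{n+1})$ via a tail estimate on large primes. I would first try bounding those with $\gamma$ of denominator $p>T$ by $O(X^{n+1}/T^{n-2})$ via counting forms with a root mod $p$ of multiplicity $\ge n-1$.
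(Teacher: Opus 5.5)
\paragraph{Case $n\geq3$.} Your reduction in (i) is sound. But it rests entirely on \Cref{thm:abstract}, which you take as given. In the paper that result is the main technical work: the explicit $S_n$-closure $\mathcal S(f;\ZZ)$ with discriminant $\Disc(f)^{n!/2}$, successive minima showing that $\hatKfm{f}{1}$ depends only on $K_f$ for generic $f$, and cross-ratios. Its proof there also needs the finiteness statement that your argument lacks.

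That finiteness statement is the real gap, in steps (ii)--(iii). You must show that outside a set of density tending to $0$, a \emph{fixed finite} set of $[(\lambda,\gamma)]\in G(\QQ)$ accounts for every integral transform of no larger height. You assert this without a mechanism, and the tail bound you propose fails for two reasons:
\begin{itemize}
\item forms with a root of multiplicity $\geq n-1$ mod $p$ have density $\asymp p^{2-n}$, which is not summable over $p$ when $n=3$;
\item a congruence count in a box of side $X$ says nothing once $p$ is comparable to $X$.
\end{itemize}

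The missing idea is to use the discriminant as a relative invariant. Write $\Disc(f)=f_0A+f_1^2B$ and diagonalize $\gamma$ (singular values over $\RR$, Smith normal form over $\ZZ_p$). This gives $|\lambda|\,|\gamma|^{n-2}|\det\gamma|\,|\Disc(f)|\ll\Ht(f)^{2(n-1)}$. For primitive integral $\gamma$ it also gives $\lambda\det(\gamma)a^2\in\ZZ$, where $a^2$ is the largest square dividing $\Disc(f)$ (\Cref{lem:absolute-value-bound,lem:denominator-bound}). Hence for $f\notin B(\varepsilon)\cup C(s)$ one gets $|\gamma|^{n-2}\ll\varepsilon^{-1}s^2$, leaving finitely many $[(\lambda,\gamma)]$. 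This is exactly where $n\geq3$ enters. The correct large-prime condition is $p^2\mid\Disc(f)$ (density $\ll p^{-2}$, controlled uniformly by Bhargava--Shankar--Wang), not a multiple root mod $p$.

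You also don't need, and haven't justified, an Euler product for $C_n$; the weight $1/m_X(F)$ couples real and $p$-adic conditions through the same $(\lambda,\gamma)$. Instead, the forms minimal with respect to the finite set $G'(\varepsilon,s)$ form a finite union of polyhedral cones intersected with congruence classes, with density $\alpha(\varepsilon,s)$. Then $C_n=\lim\alpha(\varepsilon,s)$, and $C_n>0$ follows from $N(X)\,|G'(\varepsilon,s)|\geq N(V_n(\ZZ),X)-N(B(\varepsilon),X)-N(C(s),X)$.

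\paragraph{Case $n=2$.} You have the right target. The bounds $4\Ht(f)^2\geq|4ac|=b^2+|d|m^2$ and $dm^2\leq b^2+4|ac|\leq5\Ht(f)^2$ give $\Ht(\QQ(\sqrt d))\geq\sqrt{|d|/4}$ (resp.\ $\sqrt{d/5}$). So the count is at most $\sim\frac{3}{\pi^2}(4+5)X^2$, since fundamental discriminants of one sign up to $Y$ number $\sim\frac{3}{\pi^2}Y$; the constant does not come from the squarefree density $\frac{6}{\pi^2}$.

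The entire difficulty is the reverse inequality $\Ht(\QQ(\sqrt d))\leq(1+o(1))\sqrt{|d|/4}$ (resp.\ $(1+o(1))\sqrt{d/5}$), for all but $o(X^2)$ of the $d$. Saying ``almost every squarefree $d$ in the region is hit'' gives no method for it:
\begin{itemize}
\item for $|d|$ near $4X^2$ (forcing $m=1$) you need a form of discriminant exactly $d$ whose Heegner point lies within $o(1)$ of $i$;
\item for $d$ near $5X^2$ you need a form of discriminant $d$ coefficientwise within $o(\sqrt d)$ of $\sqrt{d/5}\,(x^2+xy-y^2)$, up to signs.
\end{itemize}
A first-moment count of triples $(a,b,c)$ only shows such $d$ are hit on average, not that all but $o(X^2)$ of them are. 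The paper gets this for every large fundamental $d$ from Duke's equidistribution of Heegner points and Chelluri's equidistribution of closed geodesics in $\PSL_2(\ZZ)\backslash\PSL_2(\RR)$; only density near a single point is used. Some equidistribution input of this kind, at least on average over $d$ with a second-moment bound, is needed to close this case.
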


As might be expected, proving the above result requires some control over the
set of polynomials $f$ giving rise the the same number field $K$. To better
understand this set, it will be convenient to use the natural bijection between
polynomials $f_0x^n+f_1x^{n-1}+\cdots +f_n$ of degree $\leq n$ and binary
$n$-ic forms $f_0x^n+f_1x^{n-1}y+\cdots +f_ny^n$. The reason to do this is that,
while they carry the same information, the binary $n$-ic form perspective makes
more obvious the action of $\GL_2$ on the space of binary forms via linear
change of variables. We say that two integer-coefficient binary $n$-ic forms
$f$ and $g$ are \cdef{equivalent} if some nonzero rational multiple of $f$ lies
in the $\GL_2(\QQ)$-orbit of $g$. Then we have the following result.

\begin{theorem}\label{thm:abstract} The proportion of irreducible
  integer-coefficient binary $n$-ic forms $f$ of height $\leq X$ such that:
\begin{enumerate}[leftmargin=*, itemindent=*]
\item[(i)] there exists an irreducible integer-coefficient binary $n$-ic form
  $g$ of height $\leq X$ satisfying $K_f \cong K_g$, and
    \item[(ii)] $f$ and $g$ are {\em not} equivalent
\end{enumerate}
amongst all integer-coefficient binary $n$-ic forms with height $\leq X$ tends to
$0$ as $X$ tends to $\infty$. \end{theorem}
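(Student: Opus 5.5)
The plan is to show that almost every form $f$ of height $\le X$ defines a field $K_f$ in which every primitive element of small enough ``size'' comes from a form equivalent to $f$. I will use the geometry of numbers in the Minkowski embedding of $K_f$. Almost all forms have Galois group $S_n$, so $K_f$ has no proper subfields other than $\QQ$, and every element $\alpha\in K_f\setminus\QQ$ generates $K_f$.

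\textbf{Step 1: translate to a lattice problem.} An integral form $g$ with $K_g\cong K_f$ corresponds, up to scaling, to a primitive element $\beta$ of $K_f$; write $\theta$ for a root of $f$. The form $g$ is equivalent to $f$ precisely when $\beta$ is a linear fractional transformation $(a\theta+b)/(c\theta+d)$ with rational $a,b,c,d$. Equivalently, $\beta$ lies in the two-dimensional $\QQ$-subspace $\langle 1,\theta\rangle$, up to the Möbius twist. In the standard way, a form $g$ of height $\le X$ gives an index form and a rank-$n$ $\ZZ$-module $R_g\subset K_g$, with bounded archimedean size of $1$ and of the root. So a non-equivalent $g$ of height $\le X$ produces an element of $K_f$ that is small at every archimedean place, measured in a norm attached to $f$, and lies outside the span of $\{1,\theta\}$.

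\textbf{Step 2: successive minima bound.} For the ring $R_f$ attached to $f$, with the Minkowski norm normalized by the real roots of $f$ (using the reduction theory of Bhargava–Shankar–Wang), the basis $1,\theta,\theta^2,\dots$ shows that the first two successive minima come from $1$ and $\theta$. The key claim to prove is that for almost all $f$, the third successive minimum $\lambda_3$ exceeds the size any element coming from a height-$\le X$ form $g$ could have. Concretely, a non-equivalent $g$ would give elements $\beta$ and $\beta'$ with $\beta'$ playing the role of $g$'s root, together with $1$. These are three $\QQ$-independent elements of $K_f$ of size at most about $X^{1/n}$ times the appropriate scaling, and their products give a small module of rank $3$.

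\textbf{Step 3: count the bad $f$.} I will bound the number of bad $f$ by counting pairs $(K,\text{lattice with small } \lambda_3)$. The plan is to parametrize a bad $f$ by the field $K$ together with the three-dimensional small subspace. The number of degree-$n$ rings in $K$ with a small rank-$3$ sublattice should be $o(X^{n+1})$, via a slicing argument: fixing the ``small'' part of the coefficient data saves a power of $X$, since forms $f$ whose associated lattice has an unusually short third vector form a thin set, analogous to forms with a large cusp-like reduction. This is a Davenport-lemma count in the space $\ZZ^{n+1}$ of forms, restricted to the region where a certain polynomial invariant of $f$ is unusually small.

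The main obstacle is Step 2/3: making precise that a second, inequivalent small form forces a degenerate lattice configuration whose probability tends to $0$. My first attempt would be a direct count of pairs $(f,g)$. Fixing $g$ and the rational $\GL_2$-data relating them, $f$ is determined by $\beta\in K_g$ with $\deg\beta=n$. The number of such $\beta$ producing height $\le X$ forms is counted by lattice points in a region of $R_g\otimes\RR$ of volume about $X^{?}$, giving $\sum_g \#\{\beta\}$. Excluding $\beta\in\langle1,\theta_g\rangle$ should leave a total of $o(X^{n+1})$ for $n\ge3$. For $n=2$ every such $\beta$ is in that span, so the statement is trivial there.
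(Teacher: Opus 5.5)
Your plan transplants the $\AA^1$-height argument of Bhargava--Shankar--Wang: read off $\langle 1,\theta\rangle$ as the span of the first two successive minima of $R_f$, and call $g$ bad if its root leaves this span. For binary forms it fails at both of its load-bearing points.

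\emph{The invariant is wrong.} $G(\QQ)$-equivalence is the Möbius relation $\theta\mapsto(a\theta+b)/(c\theta+d)$, not an affine one, so membership in the linear subspace $\langle 1,\theta\rangle$ does not detect it. You acknowledge the Möbius twist in Step~1, but you then use span membership as the criterion in Steps~1 and~3. Take $g(x,y)=f(y,x)$. It has the same height as $f$ and is equivalent to it, yet its root $1/\theta$ lies outside $\langle 1,\theta\rangle$ once $n\geq 3$. So an element outside $\langle1,\theta\rangle$ coming from a small form certifies nothing. The count in your last paragraph is also false as stated: the pairs $(f,f(y,x))$ alone give $\asymp X^{n+1}$ pairs with $\beta\notin\langle 1,\theta_g\rangle$.

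\emph{The lattice is not skewed.} In general $\theta\notin R_f$. The basis $1,\zeta_1=f_0\theta,\zeta_2,\dots,\zeta_{n-1}$ of $R_f$ has lengths $1,\Ht(f),\dots,\Ht(f)$ for typical $f$, and $|\Disc R_f|^{1/2}\asymp \Ht(f)^{n-1}$ for most $f$. Minkowski's second theorem then forces the successive minima of $R_f$, and of $\OO_{K_f}$ (which has bounded index for most $f$), to be $\asymp 1,\Ht(f),\dots,\Ht(f)$. Thus $\lambda_3\asymp\lambda_2$, and your key claim in Step~2 fails. Every $g$ of height $\leq X$ produces $g_0\theta_g\in\OO_K$ of size $\ll X$, the same scale as $\lambda_3$, and elements of that size span all of $K$. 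A norm ``attached to $f$'' does not rescue this: you would have to show separately that $g$'s elements are short in it, and the invariant problem above remains. The only filtration of $K_f$ visible through successive minima is $\QQ\subsetneq K_f$, which carries no information about $f$, so Step~3 has no thin set to count.

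\textbf{The missing idea} is to move to the $S_n$-closure, which is the core of the paper.
\begin{enumerate}
\item Using the ring $B_n$ of multihomogeneous polynomials in $\alpha_i,\beta_i$, the paper builds an explicit $\ZZ$-basis of $\SnC{R_f}{\ZZ}$ consisting of $E(n,d)$ monomials of degree $d$, each of length $\ll\Ht(f)^d$.
\item It proves the discriminant of this closure is $\Disc(f)^{n!/2}$. Combined with Minkowski's second theorem for $f\notin B(\varepsilon)\cup C(s)$, this shows the successive minima are $\asymp\Ht(f)^d$. So the $S_n$-closure is highly skewed even though $R_f$ is not.
\item Consequently the degree-$\leq 1$ subspace $\mathcal{S}^1(f;\QQ)\subseteq\SnC{K}{\QQ}$ is intrinsic to $K$ for generic $f$.
\item This subspace is spanned by monomials in the projective root coordinates $\alpha_i,\beta_i$. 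The projections $\rho_j,\pi_j$ extract from it the cross ratios $(P_1,P_2;P_3,P_j)$. Hilbert~90 then produces a rational $\gamma$.
\end{enumerate}
This is how the Möbius, rather than affine, nature of equivalence gets captured.

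You would also still need a step for a generic $f$ whose partner $g$ is non-generic. The paper handles it by bounding the finitely many $[(\lambda,\gamma)]\in G(\QQ)$ that can map a form outside $B(\varepsilon)\cup C(s)$ to an integral form of no larger height. Hence each equivalence class contains at most $|G'(\varepsilon,s)|$ such forms. So these $f$ number at most a constant times the $o(X^{n+1})$ non-generic $g$.
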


\begin{remark}
  Recall that a closed point on $\Pone_\QQ$ is the vanishing locus of an
  irreducible binary $n$-ic form with integer coefficients. From this point of
  view, Theorem~ \ref{thm:main} is a result about low degree points on the
  projective line, in the sense of \cite{Viray--Vogt_2025}. Indeed, we are
  counting degree $n$ closed points on $\Pone_\QQ$ of bounded height which give
  rise to the same residue field.
\end{remark}

\subsection{History and motivation}
A classical question in number theory is: {\em for an integer $n\geq 2$, how many
  degree $n$ number fields are there with discriminant bounded by $X$}? A
folklore conjecture predicts that this number $N_n(|\Disc| < X)$ is asymptotic
to some constant times $X$. The quadratic case is classical, and amounts to
counting squarefree integers. It is known for $n=3$ by the landmark result of
Davenport--Heilbronn, for $n=4$ by work of Cohen-Diaz y Diaz-Olivier
\cite{CDyDO} and Bhargava \cite{B-quartic_count}, and for $n=5$ by Bhargava
\cite{B-quintic_count}. For $n\geq 6$, this conjecture is open, and works have
instead focused on obtaining upper and lower bounds (see for example
\cite{Schmidt-numberfields, EV-number_fields_bound, LOT-numberfield_bound,
  Patil-numberfield_bound}).

In recent years, there has been a great deal of interest in ordering and
counting number fields by invariants other than the discriminant. Much of the
attention has focused on invariants obtained by modifying the $p$-power
contribution of a ramified prime $p$ to the discriminant, depending on the
conjugacy class of the inertia group. However, an important different type of
invariant was used by Schmidt \cite{Schmidt-numberfields} in his seminal work
determining upper bounds for $N_n(|\Disc|<X)$: namely, for a number field $K$,
he considered the height of the smallest algebraic integer $\alpha\in \OO_K$ which
generates $K$, where the height of $\alpha$ with minimal polynomial
$x^n+\sum_{i\geq 1} c_{n-i}x^i$ is $\max(|c_k|^{1/k})$. We call this the
\cdef{$\AA^1$-height of $K$} since it is comparable to the height function
defined by looking at embeddings $\Spec K \to \AA^1_\QQ$, and choosing the
minimal multiplicative height of the images.

The $\AA^1$-height has been used to derive several results on the family of
degree $n$ number fields ordered by discriminant. Improving on Schmidt's
result, Ellenberg and Venkatesh \cite{EV-number_fields_bound} use the
$\AA^1$-height to improve upon the best lower bound for $N_n(|\Disc| < X)$, and
this was improved still further, again using the $\AA^1$-height, in
\cite{BhargavaShankarWang1-2022}.

Furthermore, studying degree $n$ $S_n$-number fields via their $\AA^1$-heights
has led to a re-derivation of the Malle--Bhargava heuristic
\cite{Malle-DGp2,Bhargava-Mass_formula} for their asymptotic number when
ordered by discriminant~\cite{ST-heuristics}.

By replacing $\AA^1_\QQ$ with other varieties into which $\Spec K$ might embed,
we obtain different height functions on the family of degree $n$ fields. Many
works have indicated that this is a fruitful approach to the problem of
counting number fields ordered by their discriminants. Indeed, the best known
upper bounds on the number of degree $n$ number fields ordered by discriminants
are obtained in \cite{LOT-numberfield_bound} using an $\AA^r$-height approach
for $r>1$, while the best known lower bounds are proved by Patil
\cite{Patil-numberfield_bound} using a $\PP^1$-height approach.

Our two main results fit into this framework, and are the two most natural
questions concerning any height on the family of degree $n$ number fields. In
\cite[Remark 3.3]{EV-number_fields_bound}, Ellenberg and Venkatesh hypothesized
an order of growth for the number of degree $n$ number fields ordered by
$\AA^1$- height; this was proved in \cite{BhargavaShankarWang1-2022}. Theorem
\ref{thm:main} answers a stronger version of the $\PP^1$ analogue. Meanwhile a
weaker version of the $\AA^1$-analogue of Theorem \ref{thm:abstract} was proved
in \cite[Lemma 7.2]{BhargavaShankarWang1-2022}. However, the $\PP^1$-height
analogues of these two $\AA^1$-height results are significantly more difficult
to obtain for a very simple technical reason: when degree $n$ monic
integer-coefficient polynomials $f(x)$ are ordered by $\AA^1$-height, the rings
$R_f\colonequals\ZZ[x]/(f(x))$ for almost all (100\% of) $f$ are highly
``skewed'' (the lengths of the Minkowski basis are far apart from one another).
If $f$ is such a polynomial, and if $R_f$ and $R_g$ are isomorphic for some
polynomial $g$ in-equivalent to $f$, then the height of $g$ must necessarily be
much larger than the height of $f$.

The analogous statement about Minkowski bases is false when
integer-coefficients binary $n$-ic forms are ordered by height. In fact, most
binary $n$-ic forms correspond to rings whose Minkowski bases (apart from the
element $1$) have lengths very similar to each other. So we instead study the
$S_n$-closure of these rings. We generalize work of Bhargava--Satriano in the
monic case by explicitly constructing the $S_n$-closures of rings arising from
binary $n$-ic forms. We study the relative lengths of Minkowski bases of these
$S_n$-closures, and prove that for most $f$ these rings are highly skewed (even
though the rank $n$ rings are not!). We utilize this skewness in the
$S_n$-closures to prove our main results. In the rest of the introduction,
after providing some background, we give a summary of our proofs of the main
results.

\subsection{Background}
\label{sec:background} Let $n\geq 2$, and let
\cdef{$V_n \colonequals \Sym^n(2) \cong \AA^{n+1}$} denote the \cdef{ space of
  binary $n$-ic forms}. If $B$ is a (base) ring, \cdef{$V_n(B)$} will denote
the \cdef{set of binary $n$-ic forms with coefficients in $B$}. Elements
$f \in V_n(B)$ will be written as:
\begin{align*}
    f(x,y) = f_0x^n + f_1x^{n-1}y + \cdots + f_ny^n = [f_0, f_1, \dots, f_n].
\end{align*}
The left action of $\GL_2(B)$ on $B^2$ induces a right action of $\GL_2(B)$ on
$V_n(B)$ given by

\begin{equation}\label{eq:action-on-forms}
    f^\gamma(x,y) \colonequals f(ax + by, cx + dy), \quad \gamma = \begin{pmatrix}
        a & b \\ c & d
    \end{pmatrix}.
\end{equation}
Observe that if $\gamma \in \GL_2(B)$, and $P = [\beta:\alpha]$ is a root of
$f \in V_n(B)$, then $\gamma^{-1}\cdot P$ is a root of $f^\gamma$. We obtain a right action of
$B^\times\times\GL_2(B)$ on $V_n(B)$, where $(\lambda,\gamma)$ sends $f$ to
$\lambda f^\gamma$. This action factors through the quotient by the torus
$$
T(B) \colonequals \{(t^{-n},\left(\begin{smallmatrix}t\\&t\end{smallmatrix}\right))\mid t\in B^\times\}\,.
$$
We write $$G(B) \colonequals (B^\times\times\GL_2(B))/T(B).$$
In particular, we will be concerned with the rings $B = \ZZ, \QQ, \Qbar$. 

The \cdef{discriminant} $\Disc=\Disc_n\in\ZZ[V_n]$ is a degree $2n-2$ polynomial
which is a \cdef{relative invariant} for the action of $G(B)$ on $V_n(B)$.
Namely, for $f\in V_n(B)$ and $(\lambda,\gamma)\in G(B)$, we have
\begin{equation*}
\Disc(\lambda f^\gamma)=\lambda^{2n-2}\det(\gamma)^{n(n-1)}\Disc(f).
\end{equation*}

Given $0\neq f\in V_n(\Qbar)$, we can factor $f$ as a product of linear forms:
\begin{align*}
    f &= (\alpha_1x-\beta_1y)(\alpha_2x-\beta_2y)\cdot\dots\cdot(\alpha_nx-\beta_ny).
\end{align*}
If $f_0\neq0$, we can also write
\[
  f =
  f_0\left(x-\tfrac{\beta_1}{\alpha_1}y\right)\left(x-\tfrac{\beta_2}{\alpha_2}y\right)\cdot\dots\cdot\left(x-\tfrac{\beta_n}{\alpha_n}y\right).
\]
We define \cdef{the roots of $f$} as points on the projective line
$\PP^1(\Qbar)$, namely the points $P_i \colonequals [\beta_i:\alpha_i]$. For
$f$ as above, the discriminant of $f \in V_n(\Qbar)$ can be written in terms of
its factorization:
\begin{equation*}
  \Disc(f) \colonequals \prod_{i<j} (\alpha_i \beta_j - \beta_i \alpha_j)^2=f_0^{2n-2}\prod_{i<j}\Bigl(\frac{\beta_i}{\alpha_i}-\frac{\beta_j}{\alpha_j} \Bigr)^2.
\end{equation*}

Next we describe the connection between integer coefficient binary $n$-ic forms
and rank $n$ rings.

For $f \in V_n(\ZZ)$ with $f_0\neq 0$, let $\theta$ be the image of $x$ in
$K_f \colonequals \QQ[x]/f(x,1)$. We associate to $f$ the lattice
$R_f \subseteq K_f$ generated by $\zeta_0 \colonequals 1$ and
\begin{equation}
  \label{eq:zeta_k}
  \zeta_k \colonequals f_0\theta^k + \cdots + f_{k-1}\theta, \mfor{1 \leq k \leq n-1.}
\end{equation}
It turns out that $R_f$ is a ring of rank $n$, as shown in \cite[Proposition
1.1]{Nakagawa89}. Subsequent work of Wood \cite{Wood-binary} extends this
construction, associating a rank $n$ ring $R_f$ to every binary $n$-ic form
$f\in V_n(\ZZ)$ (even $f_0=0$). We will let $K_f$ denote $R_f\otimes\QQ$, a definition
which agrees with our previous definition of $K_f$ when $f_0\neq 0$. Moreover, we
have the discriminant equality $\Disc R_f = \Disc(f)$ (see \cite[Proposition
4]{Simon01}).
\begin{definition}
  A \cdef{binary ring of rank $n$} is a rank $n$ ring $R$ that is isomorphic to
  $R_f$ for some binary $n$-ic form $f\in V_n(\ZZ)$.
\end{definition}
This construction of $R_f$ from $f$ generalizes in a straightforward way from
$\ZZ$ to arbitrary base ring $B$: elements in $V_n(B)$ give rise to
$B$-algebras of rank $n$. Moreover, the $B$-algebra $R_f$ corresponding to
$f\in V_n(B)$ is isomorphic to $R_g$ for every $g\in V_n(B)$ in the
$G(B)$-orbit of $f$. Hence, we obtain a map
\begin{equation*}
    V_n(B)/G(B) \to \brk{\text{$B$-algebras of rank $n$}}.
\end{equation*}

\begin{remark}
  If $n = 3$ and $B=\ZZ\textnormal{ or }\QQ$ (or in fact, any base scheme) the
  above map is a bijection, and is due to the Levi \cite{Levi}, Delone--Faddeev
  \cite{DeloneFaddeev} correspondence (later generalized by Gan--Gross--Savin
  \cite{GanGrossSavin}). This ($n = 3$) is the only case in which all rank $n$
  rings are binary, i.e., arise from binary $n$-ic forms. In the case $n = 4$
  and $B = \mathbf Z$, Wood~\cite{Wood-2012} showed that the quartic rings that
  arise from binary quartic forms are those that have monogenic cubic resolvent
  ring.
\end{remark}

\begin{remark}
\label{rmk:infinite-fibers}
For $n\geq4$ and $B = \QQ$, the map is far from injective. In fact, any field
extension $K/\QQ$ of degree~$n$ arises from infinitely many $G(\QQ)$-orbits in
$V_n(\QQ)$. To show this, interpret the projective line over~$K$ as an
$n$-dimensional variety over $\QQ$. For any proper subfield $K'\subsetneq K$, the points
in $\PP^1(K')$ lie on a subvariety of smaller dimension. Any point $P$ in
$\PP^1(K)\setminus\bigcup_{K'\subsetneq K}\PP^1(K')$ generates the field $K$, so there is an
irreducible form $f\in V_n(\QQ)$ which has $P$ as a root, and which therefore
satisfies $K_f\cong K$. On the other hand, $G(\QQ)$ acts on the roots of forms in
$V_n(\QQ)$ through the $3$-dimensional group $\PGL_2(\QQ)$. Hence, the roots of
the forms in a $G(\QQ)$-orbit all lie in a $3$-dimensional subvariety. However,
the (Zariski dense) subset
$\PP^1(K) \setminus \bigcup_{K'\subsetneq K}\PP^1(K')$ of the $n$-dimensional projective line over
$K$ cannot be covered by finitely many $3$-dimensional subvarieties.
\end{remark}

We say that two elements of $V_n(\ZZ)$ are \cdef{$G(\QQ)$-equivalent} if they
lie in the same $G(\QQ)$-orbit. When $f$ is irreducible, then $K_f$ is a number
field. When $f$ is separable, $K_f \simeq K_{F_1}\times\cdots\times K_{F_r}$ is an étale
$\QQ$-algebra, where $f = F_1\cdots F_r$ is the factorization of $f$ over $\QQ$.

Bhargava and Satriano~\cite{BhargavaSatriano-2014}, building on earlier work of
Bhargava~\cite[\S2.1]{Bhargava-HCL3-2024}, define the $S_n$-closure of a rank
$n$ ring $R$ as follows. First consider the $n$-fold tensor product
$R^{\otimes n}$ and the ideal $I(R/\ZZ)$ generated by all elements of the
form \begin{equation*} s_j(a)-\sum_{1\leq i_1<\cdots<i_j\leq n}a^{(i_1)}\cdots a^{(i_j)}
\end{equation*}for all $a\in A$ and $j\in\{1,\ldots,n\}$, where $s_j(a)$ denotes $(-1)^j$ times the $(n-j)$-th coefficient of the characteristic polynomial of the $\ZZ$-module homomorphism $ R \to R$ given by $x\mapsto ax$, and $a^{(i)}$ denotes the tensor $1\otimes\cdots\otimes a\otimes\cdots\otimes 1$ in $R^{\otimes n}$, with $a$ in the $i$-th position and ones elsewhere.

When $f$ is monic, so that $R_f$ is monogenic, an explicit basis of the
$S_n$-closure of $R_f$ was given by Bhargava and Satriano~\cite[Theorem
16]{BhargavaSatriano-2014} using powers of the generator of $R_f$. Using this
basis, they computed the discriminant of the ring. For most polynomials $f$,
the discriminant is not much smaller than the product of the lengths of the
basis vectors, showing that the lengths of these basis vectors approximate the
successive minima of the lattice. Using this, one can see that the
$S_n$-closure of $R_f$ is highly skewed, just as $R_f$ is.

On the way to proving our main theorems, we generalize this to arbitrary $f$.
We construct a basis for the $S_n$-closure of $R_f$, compute the discriminant
of $R_f$, and for most $f$ show that the lengths of these basis vectors
approximate the successive minima, showing that the $S_n$-closure of $R_f$ is
highly skewed, though $R_f$ itself usually is not.

\subsection{Strategy of proof}
\label{sec:strategy-of-proof}

We now give a heuristic argument for \Cref{thm:main} and explain the strategy
of proof. We use that most of the $\asymp X^{n+1}$ forms $f\in V_n(\ZZ)$ of height at
most $X$ satisfy the following properties:
\begin{enumerate}[(a)]
\item \label{it:f-Sn} $f$ is irreducible over $\QQ$ with Galois group $S_n$ by
  Hilbert's irreducibility theorem, and thus gives rise to a number field $K_f$
  of degree $n$ with Galois group $S_n$. (See \Cref{lem:usually-full-galois}.)
\item \label{it:f-large-disc} $\Disc(f) \asymp X^{2(n-1)}$ since $\Disc(f)$ is a
  homogeneous polynomial of degree $2(n-1)$ in the coefficients of~$f$. (See
  \Cref{lem:usually-large-disc}.)
\item \label{it:sqfree-disc-approx} $\Disc(f)$ is close to squarefree, i.e.,
  not divisible by a large square, essentially by
  \cite{BhargavaShankarWang2-2022}. Hence, the corresponding ring $R_f$ has
  small index in the ring of integers of $K_f$ and the discriminant of $K_f$ is
  $\asymp X^{2(n-1)}$. (See \Cref{lem:usually-almost-squarefree}.)
\end{enumerate}

A folklore conjecture (which is known to hold for $n\in\{2,3,4,5\}$) predicts
that there are $\asymp X^{2(n-1)}$ number fields with discriminant $\asymp X^{2(n-1)}$.

For $n=2$, we therefore have many more forms ($\asymp X^3$) than possible number
fields ($\asymp X^2$), so it seems likely that most quadratic number fields whose
discriminant is in range (i.e., is the discriminant of some $f\in V_n(\RR)$ of
height at most $X$) will correspond to some forms $f\in V_n(\ZZ)$ of height at
most $X$. As the number of such number fields is
$\sim \frac{27}{\pi^2} \cdot X^2$, this would imply the case $n=2$ of \Cref{thm:main}.

To make this rigorous, we use equidistribution results of Duke \cite{Duke88} in
the imaginary quadratic case and Chelluri \cite{Chelluri2004} in the real
quadratic case (also discussed in \cite[p.~4]{ELMV12}), which imply the
equidistribution of the subsets
\[
	\{f / \sqrt{|d|} : f \in V_2(\ZZ)\textnormal{ with }\Disc(f) = d\}
	\qquad\textnormal{of}\qquad
	\left\{f \in V_2(\RR) : \Disc(f) = \sgn(d)\right\}
\]
for fundamental discriminants $d\to\pm\infty$. (See \Cref{sec:quadratic-case}.)

For $n=3$, Levi has shown that two forms giving rise to the same number field
are always $G(\QQ)$-equivalent. This strong statement is false for $n\geq4$. (See
\Cref{rmk:infinite-fibers}.) However, we conjecturally have many more possible
number fields ($\asymp X^{2(n-1)}$) than forms ($\asymp X^{n+1}$) of height
$X$, which suggests that two forms of height at most $X$ giving rise to the
same number field are ``usually'' $G(\QQ)$-equivalent. This heuristically
justifies the case $n\geq3$ of \Cref{thm:abstract}. A sieve theory argument can be
used to show that the number of $G(\QQ)$-equivalence classes of forms
$f\in V_n(\ZZ)$ of height at most $X$ is $\sim C_n\cdot X^{n+1}$ for some
$C_n>0$. (See \Cref{thm:counting-equivalence-classes}.) This heuristically
justifies the case $n\geq3$ of \Cref{thm:main}.

To make this rigorous, we apply the following method that was used in
\cite[section~5]{BhargavaShankarWang1-2022} to count number fields by their
$\AA^1$-height. Consider monic polynomials $f\in\ZZ[x]$ of degree $n$, almost all
of which are irreducible and therefore give rise to an order
$R_f = \ZZ[x]/(f(x))$ in a number field $K_f = \QQ[x]/(f(x))$. Denote the image
of $x$ in $R_f$ by $\theta_f$. One can show that two monic polynomials giving rise
to the same number field are usually equivalent up to
$f(x)\mapsto a^{-n} f(ax+b)$ with $a\in\QQ^\times$ and $b\in\QQ$ by looking at the successive
minima of the lattice $R_f$ (which usually has small index in the integer
lattice of $K_f$). Consider the filtration
\[
	\QQ = U^{0}(f) \subsetneq U^{1}(f) \subsetneq \cdots \subsetneq U^{n-1}(f) = K_f
\]
given by $U^{d}(f) = \langle 1,\theta_f,\dots,\theta_f^d\rangle_\QQ$. Most monic polynomials satisfy
properties analogous to \ref{it:f-Sn}, \ref{it:f-large-disc}, and
\ref{it:sqfree-disc-approx}. Together with Minkowski's second theorem, these
properties imply that every set $R_f\cap U^d(f)$ contains a $\QQ$-basis of
$U^d(f)$ (namely $1,\theta_f,\dots,\theta_f^d$) consisting of vectors that are much
shorter than all vectors in $R_f\setminus U^d(f)$. This implies that two polynomials
giving rise to the same number field usually have the same associated
filtrations. In particular, the subspaces
$U^{1}(f) = \langle 1,\theta_f\rangle_\QQ$ agree, which implies that the polynomials are related
by $f(x)\mapsto a^{-n} f(ax+b)$.

When counting number fields by $\PP^1$-height, one runs into the following
problem: For forms $f\in V_n(\ZZ)$ of height at most $X$, the successive minima
of $R_f\setminus\QQ$ are usually all of a similar order of magnitude. We can therefore
only make use of the filtration $\QQ\subsetneq K_f$, which is too weak to recover
specific information about the form $f$. We overcome this obstacle by
considering the $S_n$-closures $\SnC{R_f}{\ZZ} \subseteq \SnC{K_f}{\QQ}$ of the ring
$R_f$ and the field $K_f$, respectively. In
\Cref{sec:basis-of-closure,sec:successive-minima}, using our control of the
successive minima of $\SnC{R_f}{\ZZ}$, we construct a filtration
\[
	\QQ = \hatKfm{f}{0} \subsetneq \hatKfm{f}{1} \subsetneq \cdots \subsetneq \hatKfm{f}{n-1} = \SnC{K_f}{\QQ}
\]
such that for most $f$, every set $\SnC{R_f}{\ZZ}\cap\hatKfm{f}{d}$ contains a
$\QQ$-basis of $\hatKfm{f}{d}$ consisting of vectors that are much shorter than
all vectors in $\SnC{R_f}{\ZZ}\setminus\hatKfm{f}{d}$. It follows that two forms giving
rise to the same number field usually have the same associated filtrations,
which, finally, implies that the two forms are $G(\QQ)$-equivalent. (See
\Cref{sec:without-representation-theory}.)

\section{The quadratic case}
\label{sec:quadratic-case}

\subsection{Setup} Let $\DD \subset \ZZ$ be the set of \cdef{fundamental
  discriminants}, i.e., the set of discriminants of quadratic number fields. As
with any discriminant, if $d \in \DD$, then $d\equiv 0,1 \md 4$. If
$d \equiv 0 \md 4$, then $d = 4D$ for some squarefree integer $D$ satisfying
$D \equiv 2,3 \md 4$. If $d \equiv 1 \md 4$, then $d = D$ is squarefree. It is well known
that
\begin{equation*}
    \label{eq:quadratic-fields-by-discriminant}
    \begin{aligned}
      N_2^+(\Disc < X) \colonequals \# \brk{K/\QQ \textrm{ quadratic field}:  0 < \Disc(K) \leq X} &= \#\brk{d \in \DD : 0 < d \leq X} \sim \frac{3}{\pi^2}\cdot X, \\
      N_2^-(\Disc < X) \colonequals \# \brk{K/\QQ  \textrm{ quadratic  field}:  -X \leq \Disc(K) < 0} &= \#\brk{d \in \DD : -X \leq d < 0} \sim \frac{3}{\pi^2}\cdot X.
    \end{aligned}
\end{equation*}
We will leverage this result to show the following theorem.
\begin{theorem}
    \label{thm:quadratic-case}
    The number of quadratic number fields with $\Pone$-height bounded by $X$,
    as $X \to \infty$, is
    \begin{equation*}
      N_2(\Ht < X) \colonequals \# \brk{K/\QQ  \textnormal{ quadratic field}:  \Ht(K) \leq X} \sim \frac{27}{\pi^2}\cdot X^2.
    \end{equation*}
\end{theorem}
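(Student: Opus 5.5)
The plan is to prove matching upper and lower bounds by reducing to counting fundamental discriminants in explicit ranges. A quadratic field $K$ of discriminant $d\in\DD$ satisfies $\Ht(K)\le X$ if and only if there is $f=[a,b,c]\in V_2(\ZZ)$ with $\max(|a|,|b|,|c|)\le X$ and $\Disc(f)=b^2-4ac=dm^2$ for some integer $m\ge1$.

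\emph{Upper bound.} If $\Ht(f)\le X$, then $b^2-4ac\le b^2+4|ac|\le 5X^2$. Similarly $4ac-b^2\le 4X^2$. Hence $|d|\le |d|m^2\le 5X^2$ when $d>0$, and $|d|\le 4X^2$ when $d<0$. By the asymptotics for $N_2^{\pm}(\Disc<X)$ recalled above, this gives
\[
N_2(\Ht\le X)\le \tfrac{3}{\pi^2}(5X^2+4X^2)(1+o(1))=\tfrac{27}{\pi^2}X^2(1+o(1)).
\]
These ranges are sharp on the real locus: $[X,X,-X]$ has discriminant $5X^2$, and $[X,0,X]$ has discriminant $-4X^2$.

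\emph{Lower bound.} Fix $\varepsilon>0$. It suffices to show that for all sufficiently large $X$, every fundamental discriminant $d$ with $\varepsilon X^2\le d\le(5-\varepsilon)X^2$, or with $\varepsilon X^2\le -d\le(4-\varepsilon)X^2$, is the discriminant of some primitive $f\in V_2(\ZZ)$ with $\Ht(f)\le X$. The excluded discriminants number $O(\varepsilon X^2)$, so letting $\varepsilon\to0$ afterwards recovers the full constant.

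Rescale by setting $g=f/\sqrt{|d|}$. Then $g$ lies on the quadric $Q_\pm=\{g\in V_2(\RR):\Disc(g)=\pm1\}$, and the condition $\Ht(f)\le X$ becomes $g\in B_t:=\{\Ht(g)\le t\}$ with $t=X/\sqrt{|d|}$. In our ranges $t$ lies in the compact interval $[1/\sqrt{5-\varepsilon},1/\sqrt\varepsilon]$ (respectively $[1/\sqrt{4-\varepsilon},1/\sqrt\varepsilon]$). For every such $t$, the set $B_t\cap Q_\pm$ contains a nonempty open subset of $Q_\pm$. This follows because $t>1/\sqrt5$ (respectively $t>1/2$) and the extremal forms above lie in the interior of the allowed region after a small perturbation.

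By compactness, I would choose finitely many fixed open sets $U_1,\dots,U_k\subset Q_\pm$ with compact closure, such that each $B_t\cap Q_\pm$ contains some $U_i$. Each $U_i$ should moreover be chosen so that its image in $\SL_2(\ZZ)\backslash Q_\pm$ has positive invariant measure. Here $\SL_2(\ZZ)\backslash Q_-$ is the modular surface (two copies, one per sign of $a$). In the real case, points of $Q_+$ correspond to oriented geodesics, and $\SL_2(\ZZ)\backslash Q_+$ is identified with the unit tangent bundle quotient $\SL_2(\ZZ)\backslash\PSL_2(\RR)$ modulo the geodesic flow. Rather than work with that non-Hausdorff quotient directly, I would pull $U_i$ back to a set of positive measure in $\SL_2(\ZZ)\backslash\PSL_2(\RR)$.

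\emph{Main obstacle.} The hard step is applying the equidistribution theorems of Duke \cite{Duke88} ($d<0$) and Chelluri \cite{Chelluri2004} ($d>0$) in a form uniform enough for this. These theorems say that the Heegner points, respectively the closed geodesics, attached to the class group of discriminant $d$ equidistribute as $|d|\to\infty$ through fundamental discriminants. Since there are only finitely many sets $U_i$, each of positive measure, equidistribution implies that for all $|d|$ large, some form $f$ of discriminant $d$ has $f/\sqrt{|d|}\in U_i$ for every $i$. In the real case this means some point of the closed geodesic, which is a form equivalent to the reduced ones, lands in $U_i$. Hence $\Ht(f)\le X$. The points I expect to need care with are: translating geodesic equidistribution into the statement that some $\SL_2(\ZZ)$-translate of the rescaled form lies in $U_i$; and handling the two sheets of $Q_-$ (positive versus negative definite), for which using $\pm f$ suffices. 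Everything else is bookkeeping with the count of fundamental discriminants.
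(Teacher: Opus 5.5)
Your proposal is correct and follows essentially the same route as the paper: the trivial bounds $d\le 5\Ht(f)^2$ and $-d\le 4\Ht(f)^2$ give the upper bound, and Duke/Chelluri equidistribution produces, for every large fundamental $d$, an integral form of discriminant $d$ whose rescaling $f/\sqrt{|d|}$ lies near the extremal form $[1,0,1]/2$ resp.\ $[1,1,-1]/\sqrt5$ (the paper packages this as $\Ht(\QQ(\sqrt d))\sim\sqrt{|d|/4}$ resp.\ $\sqrt{d/5}$). The only superfluous piece is your compactness argument over $t$: since $\{\Ht\le t\}$ grows with $t$, a single small neighborhood of the extremal form inside $\{\Ht<1/\sqrt{5-\varepsilon}\}$ (resp.\ $\{\Ht<1/\sqrt{4-\varepsilon}\}$) works for every $t$ in your range, and no perturbation is needed because the extremal forms already have height strictly below $t$.
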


The main ingredients in the proof are Duke's equidistribution theorem
\cite[Theorem 1]{Duke88} (in the case of negative discriminants) and Chelluri's
generalization (in the case of positive discriminants). In fact, we don't need
the full power of equidistribution, since mere density suffices for our
purposes. We proceed to recall the precise statements of their results.

\subsection{Duke's theorem for negative discriminants}
\label{sec:duke-imaginary}
Let $d \in \DD$ be a negative fundamental discriminant. Consider the quadratic
imaginary number field $\QQ(\sqrt{d})$ embedded in the complex numbers, so that
$\sqrt{d}/i > 0$. Let $\HH$ denote the upper half plane, with its usual metric
$\dd s^2 = y^{-2}(\dd x^2 + \dd y^2)$. The group $\PSL_2(\RR)$ is the isometry
group of the hyperbolic plane $\HH$, and it acts by fractional linear
transformations. Denote by $\mathcal{Y}(1)$ the \cdef{modular orbifold}
$\PSL_2(\ZZ)\backslash \HH$, and let
\begin{equation}
    \label{eq:fundamental-domain}
    \FD \colonequals \brk{z \in \HH : \re(z) \in [-1/2,1/2), |z| > 1, \text{ or } |z| = 1, \re(z) \leq 0},
\end{equation}
denote the fundamental domain. The set of \cdef{Heegner points} associated to
the negative fundamental discriminant $d$ is the finite set
\begin{equation*}
  \Lambda_d \colonequals \brk{\tau_{[a,b,c]} \colonequals \dfrac{-b+\sqrt{d}}{2a} \in \FD : d = b^2-4ac, \; a,b,c \in \ZZ}.
\end{equation*}
  
Gauss proved that the size of $\Lambda_d$ is precisely $h(d)$, the class number of
$\QQ(\sqrt{d})$. Duke proves that the Heegner points equidistribute in a
natural way, when $\DD \ni d \to -\infty$.
\begin{theorem}[Duke]
    \label{thm:duke-imaginary}
    Let $\dd\mu = \tfrac3\pi\dd x\dd y/y^2$ be the
    invariant probability measure
    of $\FD$. Let $\Omega \subset \FD$ be convex
    \footnote{The geodesic between any two points of $\Omega$ is itself
      contained in $\Omega$.} with piecewise smooth boundary.
    Then, there exists $\delta > 0$ depending only
    on $\Omega$ such that
    \begin{equation*}
        \dfrac{\#(\Lambda_d \cap \Omega)}{\#\Lambda_d} = \mu(\Omega) + O(|d|^{-\delta}), \quad \text{as } d\in \DD \text{ and } d \to -\infty.
    \end{equation*}
\end{theorem}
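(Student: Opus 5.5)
The plan is the Weyl-criterion strategy: reduce the counting statement to power-saving bounds for the Weyl sums $W_d(u)=\sum_{\tau\in\Lambda_d}u(\tau)$, where $u$ runs over a spectral basis of $L^2(\mathcal{Y}(1),\mu)$, and then normalise by $\#\Lambda_d=h(d)$. Siegel's theorem gives $h(d)\gg_\varepsilon |d|^{1/2-\varepsilon}$, so any bound $W_d(u)\ll |d|^{1/2-\eta}$ with $\eta>0$, polynomial in the spectral parameter of $u$, suffices.

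\textbf{Step 1: smoothing and truncation.} First I replace the indicator of $\Omega$ by smooth, compactly supported functions $\phi_\pm$ on $\mathcal{Y}(1)$ with $\phi_-\le \mathbf 1_\Omega\le\phi_+$.
\begin{itemize}
\item The $\phi_\pm$ are chosen at scale $\varepsilon=|d|^{-\delta_1}$ near $\partial\Omega$, so that $\int(\phi_+-\phi_-)\,d\mu\ll\varepsilon$. The piecewise smooth boundary controls the measure of an $\varepsilon$-neighbourhood of $\partial\Omega$.
\item I also truncate at height $\operatorname{Im}z\le Y$ with $Y=|d|^{\delta_2}$. Each $\tau_{[a,b,c]}$ with $\operatorname{Im}\tau>Y$ has $a<\sqrt{|d|}/(2Y)$, and for each such $a$ there are $\ll|d|^{\varepsilon}$ admissible $b\bmod 2a$ with $b^2\equiv d\pmod{4a}$ in the reduced range. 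Hence there are $\ll|d|^{1/2+\varepsilon}/Y$ Heegner points above height $Y$, which is a power saving relative to $h(d)$.
\end{itemize}

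\textbf{Step 2: spectral expansion.} Next I expand $\phi_\pm$ spectrally:
\[
\phi=\langle\phi,1\rangle+\sum_j\langle\phi,u_j\rangle u_j+\frac{1}{4\pi}\int_{\RR}\langle\phi,E(\cdot,\tfrac12+it)\rangle E(\cdot,\tfrac12+it)\,dt,
\]
where the $u_j$ are Hecke–Maass cusp forms. The constant term produces the main term $\mu(\Omega)\,h(d)$. Integrating by parts with the Laplacian shows that the spectral coefficients decay like $(1+|t_j|)^{-A}\varepsilon^{-A'}Y^{A''}$. So it suffices to bound $W_d(u_j)$ and $W_d(E(\cdot,\tfrac12+it))$ polynomially in $t$, with a fixed power saving in $|d|$.

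\textbf{Step 3: the Eisenstein part.} By Hecke's formula,
\[
\sum_{\tau\in\Lambda_d}E(\tau,s)=c\,|d|^{s/2}\frac{\zeta_K(s)}{\zeta(2s)},\qquad K=\QQ(\sqrt d),
\]
with $\zeta_K(s)=\zeta(s)L(s,\chi_d)$. On $\operatorname{Re}s=\tfrac12$ the Burgess subconvexity bound $L(\tfrac12+it,\chi_d)\ll(1+|t|)^{A}|d|^{3/16+\varepsilon}$ gives $W_d(E)\ll |d|^{1/4+3/16+\varepsilon}$. This is a power saving over $|d|^{1/2}$.

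\textbf{Step 4: the cusp forms (main obstacle).} Via the Shintani/Katok–Sarnak theta lift, $W_d(u_j)$ equals, up to harmless factors, $|d|^{3/4}\rho_j(d)$, where $\rho_j(d)$ is the $|d|$-th Fourier coefficient of a weight-$\tfrac12$ Maass form on $\Gamma_0(4)$ in Kohnen's plus space. Trivial bounds give only $W_d\ll |d|^{1/2+\varepsilon}$, so a genuine nontrivial estimate $\rho_j(d)\ll |d|^{-1/4-\delta}$ is needed. Equivalently, by Waldspurger, this is subconvexity for $L(\tfrac12,u_j\otimes\chi_d)$ in the $d$-aspect.

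I would follow Iwaniec's approach. The idea is to embed $\rho_j(d)$ in a family over a spectral window using the half-integral weight Kuznetsov formula. The Kloosterman sums that appear are Salié sums, which can be evaluated explicitly. One then averages over the level in an auxiliary family $\Gamma_0(4N)$ to gain from the sum over moduli. This yields a saving $|d|^{-1/28}$, with polynomial dependence on $t_j$.

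The real difficulty is that the Shimura lift is not injective on the half-integral weight side. One therefore has to work with all of the half-integral weight forms and their Fourier coefficients uniformly, without appealing to multiplicativity. Combining Steps 1–4 and choosing $\delta_1,\delta_2$ small gives the error $O(|d|^{-\delta})$.
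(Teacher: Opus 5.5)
The paper gives no proof of this theorem. It quotes it from Duke's 1988 paper (Theorem~1 there), and all it uses later is the qualitative consequence \Cref{cor:duke-imaginary}: every nonempty open set meets $\Lambda_d$ once $|d|$ is large. The authors say explicitly that ``mere density suffices.'' Your proposal is a different thing, an outline of Duke's own argument, and as an outline it is accurate:
\begin{itemize}
\item the Weyl sums are normalised by Siegel's ineffective bound $h(d)\gg_\varepsilon|d|^{1/2-\varepsilon}$;
\item your count $\ll|d|^{1/2+\varepsilon}/Y$ of Heegner points above height $Y$ is correct;
\item Hecke's formula with Burgess gives $|d|^{7/16+\varepsilon}$ for the Eisenstein part, whereas convexity would give only $|d|^{1/2+\varepsilon}$, so subconvexity really is needed there;
\item for cusp forms, the Katok--Sarnak/Shintani identity $W_d(u_j)\approx|d|^{3/4}\rho_j(d)$ followed by Iwaniec's level-averaging method, with Proskurin's half-integral weight Kuznetsov formula and explicitly evaluated Sali\'e sums, yields $\rho_j(d)\ll|d|^{-2/7+\varepsilon}$. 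This is the $|d|^{-1/28}$ saving you quote.
\end{itemize}

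Step~4, however, is a pointer to the core of Duke's paper rather than an argument. That includes the polynomial-in-$t_j$ control of the norm of the theta lift, which is hidden in your ``harmless factors.'' So logically your write-up rests on Duke--Iwaniec exactly as the paper's citation does. What it adds is the explicit reduction and the quantitative rate. The paper never needs the rate: for its purposes any proof of qualitative density for all fundamental $d\to-\infty$ would do, for instance Waldspurger combined with any subconvex bound for $L(\tfrac12,u_j\otimes\chi_d)$.

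One correction to your motivation in Step~4. The obstruction is not non-injectivity of the Shimura lift; on Kohnen's plus space, where the Katok--Sarnak lift lives, eigenforms correspond one-to-one. The real obstruction is that the squarefree-indexed coefficients $\rho_j(d)$ are not governed by Hecke eigenvalues: the operators $T_{p^2}$ only relate $\rho_j(dm^2)$ to $\rho_j(d)$. That is why one must treat the whole spectral family through Kuznetsov and isolate a single coefficient by positivity, which is the conclusion you correctly drew.
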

\begin{corollary}
  \label{cor:duke-imaginary}
  For any nonempty open $\Omega \subset\FD$, there exists $N_\Omega > 0$ such that for every
  fundamental discriminant $d < -N_\Omega$, there exists a quadratic form
  $[a,b,c]\in V_2(\ZZ)$ with $\tau_{[a,b,c]} \in \Omega \cap \Lambda_d$.
\end{corollary}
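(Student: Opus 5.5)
The corollary should follow from Theorem~\ref{thm:duke-imaginary} once we find a convex set with piecewise smooth boundary and positive measure inside the given open set. Since $\Omega\subset\FD$ is nonempty and open, choose a point $z_0\in\Omega$ and a radius $r>0$ with the closed hyperbolic disc $B=\{z\in\HH : d_{\HH}(z,z_0)\le r\}$ contained in $\Omega$. If $z_0$ is on the boundary of $\FD$, first replace it with a nearby interior point of $\Omega$; this is possible because $\Omega$ is open in $\HH$. Hyperbolic discs are geodesically convex: in the disc model centered at $z_0$ they become Euclidean discs centered at the origin, and geodesics through the origin are diameters. A hyperbolic disc is also a Euclidean disc in $\HH$, so its boundary is smooth. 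Finally $\mu(B)>0$, since $B$ has nonempty interior and $\mu$ has a positive density $\frac{3}{\pi}y^{-2}$.

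Now apply Theorem~\ref{thm:duke-imaginary} to $B$. It gives $\delta>0$ and, implicitly, a constant $C$ and threshold $N_0$ such that for every fundamental discriminant $d<-N_0$,
\[
\frac{\#(\Lambda_d\cap B)}{\#\Lambda_d}\ \ge\ \mu(B)-C|d|^{-\delta}.
\]
Choose $N_\Omega\ge N_0$ large enough that $C N_\Omega^{-\delta}<\mu(B)/2$. Then for $d<-N_\Omega$ the right-hand side exceeds $\mu(B)/2>0$. Since $\#\Lambda_d=h(d)\ge 1$, this forces $\Lambda_d\cap B\neq\emptyset$.

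Any $\tau\in\Lambda_d\cap B$ is by definition of the form $\tau_{[a,b,c]}$ with $a,b,c\in\ZZ$ and $b^2-4ac=d$, and it lies in $\Omega$ because $B\subset\Omega$. This is the required form.

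The only real point to check is the hypothesis of Theorem~\ref{thm:duke-imaginary}, namely that the test set is convex with piecewise smooth boundary and lies in $\FD$. The hyperbolic disc handles convexity and smoothness. It lies in $\FD$ because we can shrink $r$ so that $B\subset\Omega\subset\FD$.
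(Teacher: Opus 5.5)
Your proposal is correct and is exactly the deduction the paper leaves implicit (it states the corollary without proof): shrink $\Omega$ to a geodesically convex set with smooth boundary and positive $\mu$-measure, apply Theorem~\ref{thm:duke-imaginary}, and use that $\mu(B)-O(|d|^{-\delta})>0$ once $|d|$ is large. One minor slip: the fact that diameters through the center stay in the disc only shows the hyperbolic disc is star-shaped about $z_0$, so for full geodesic convexity cite instead the standard fact that $z\mapsto d_{\HH}(z,z_0)$ is convex along geodesics of $\HH$, which makes hyperbolic balls convex.
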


\subsection{Heights of imaginary quadratic fields} Recall that a quadratic form
$f = ax^2 + bxy + cz^2 \in V_2(\ZZ)$ is called \cdef{definite} if it has negative
discriminant, \cdef{positive} when $a > 0$, and \cdef{primitive} when
$\gcd(a,b,c) = 1$. Moreover, a positive definite form $f$ is said to be
\cdef{reduced} if it satisfies
\begin{enumerate}[label=(\roman*)]
    \item $|b| \leq a \leq c$, and
    \item If $|b| = a$ or $a = c$, then $b \geq 0$.
\end{enumerate}
More geometrically, to any form $f = ax^2 + bxy + cz^2 \in V_2(\ZZ)$ with
$a \neq 0$ and discriminant $d$ we can associate the complex number
$\tau_{[a,b,c]} = (-b+\sqrt{d})/2a$. When $f$ is positive definite,
$\tau_{[a,b,c]} \in \HH$, and
$f = a(x-\tau_{[a,b,c]}y)(x-\bar\tau_{[a,b,c]}y)$. If in addition $f$ is reduced,
then $\tau_{[a,b,c]}$ is in the fundamental domain~(\ref{eq:fundamental-domain}).
Moreover, if $\Disc(f) = d <0$ is a fundamental discriminant, then $f$ is
primitive and $\tau_{[a,b,c]} \in \Lambda_d$.

The following lemma is essentially \cite[Theorem 1]{Ruppert98}.
\begin{lemma}
    \label{lem:imaginary-ht-estimate}
    For $d \in \DD$ negative, we have that
    \begin{enumerate}[label=(\roman*)]
    \item \label{it:lower-imag} $\sqrt{\tfrac{|d|}{4}} \leq \Ht\paren{\QQ(\sqrt{d})},$ and
    \item \label{it:upper-imag} $\Ht\paren{\QQ(\sqrt{d})} \leq \paren{1 +
        o(1)}\sqrt{\tfrac{|d|}{4}},$ as $d \to -\infty$.
    \end{enumerate}
\end{lemma}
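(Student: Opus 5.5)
The plan is to prove the lower bound (i) by a direct inequality on the discriminant of any form defining the field, and the upper bound (ii) by using \Cref{cor:duke-imaginary} to find a reduced form whose Heegner point lies near $\rho=e^{2\pi i/3}$, where the coefficients of the form are balanced.

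For (i), let $f=[a,b,c]\in V_2(\ZZ)$ be any irreducible form with $K_f\cong\QQ(\sqrt{d})$ and $\Ht(f)=H$. Then $\Disc(f)=b^2-4ac=m^2d$ for some nonzero integer $m$, because $\Disc(f)$ and $d$ differ by a rational square and $d$ is fundamental. Hence $|d|\le |b^2-4ac|$. Since $d<0$ we have $b^2<4ac$, so $ac>0$, and therefore
\[
|b^2-4ac| = 4ac-b^2 \le 4ac \le 4H^2 .
\]
This gives $H\ge\sqrt{|d|/4}$. (The bound $4H^2$ is sharp only up to the $-b^2$ term, which is why (ii) asks for $b$ small relative to $a$ and $c$.)

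For (ii), fix $\epsilon>0$. Take $\Omega\subset\FD$ to be a small open neighborhood of the point $i$ (or any interior point with $|\tau|$ close to $1$ and $\re\tau$ close to $0$), for instance
\[
\Omega=\{\tau\in\FD: |\re\tau|<\eta,\ 1<|\tau|<1+\eta\}\quad\text{for a small } \eta>0 .
\]
By \Cref{cor:duke-imaginary}, for every fundamental $d<-N_\Omega$ there is a form $[a,b,c]$ of discriminant $d$ with $\tau=\tau_{[a,b,c]}\in\Omega$. After replacing the form by its negative if needed, we may assume $a>0$. We have
\[
\re\tau=-\frac{b}{2a},\qquad |\tau|^2=\frac{c}{a},\qquad \im\tau=\frac{\sqrt{|d|}}{2a}.
\]
Membership in $\Omega$ gives $|b|<2\eta a$ and $a<c<(1+\eta)^2a$. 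Also $|\tau|^2\ge(\im\tau)^2=|d|/(4a^2)$, and $|\tau|^2\le(1+\eta)^2$. Combining these,
\[
a^2 \le |d|/\big(4(\im\tau)^2/|\tau|^2\cdot \ldots\big),
\]
or, more cleanly, $|d|=4ac-b^2\ge 4a^2-4\eta^2a^2$, so $a\le\sqrt{|d|/4}\,(1-\eta^2)^{-1/2}$. Then
\[
\Ht(f)=\max(a,|b|,c)\le c\le(1+\eta)^2a\le(1+\eta)^2(1-\eta^2)^{-1/2}\sqrt{|d|/4}.
\]
Choosing $\eta$ so that this factor is at most $1+\epsilon$ yields (ii), since $K_f=\QQ(\sqrt d)$.

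The only point needing care is the choice of $\Omega$. It must be open and nonempty, so that \Cref{cor:duke-imaginary} applies, yet force both $|b|\ll a$ and $c\approx a$. The neighborhood of $i$ does exactly this, so I expect the argument to be straightforward once the corollary is granted; the real depth is hidden in Duke's theorem.
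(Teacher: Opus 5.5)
Your proof is correct and follows the paper's route: the same inequality $|d|\le 4ac-b^2\le 4\Ht(f)^2$ for (i), and for (ii) \Cref{cor:duke-imaginary} applied to a small open region of $\FD$ next to $i$. The only difference is cosmetic: the paper uses the square $-\varepsilon\le\re\tau\le 0$, $1\le\im\tau\le 1+\varepsilon$ and gets $a\le\sqrt{|d|/4}$ directly from $\im\tau\ge 1$, whereas you get $a\le(1-\eta^2)^{-1/2}\sqrt{|d|/4}$ from $|d|=4ac-b^2$. You should delete the garbled intermediate display, and also the opening reference to $\rho=e^{2\pi i/3}$, which is the wrong point: near $\rho$ one has $|b|\approx a\approx c$ and $|d|\approx 3a^2$, giving height only about $\sqrt{|d|/3}$.
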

\begin{proof}
  For any form $f = [a,b,c]\in V_2(\ZZ)$ such that $K_f \cong \QQ(\sqrt{d})$, the
  binary ring $R_f$ is an order in the ring of integers of $\QQ(\sqrt{d})$, and
  so $b^2 - 4ac = \Disc(f) = \Disc(R_f) = m^2\cdot d$ for some integer
  $m\geq1$. It follows that
    \begin{equation*}
        4\Ht(f)^2 \geq |4ac| = |b^2 - dm^2| = b^2 + |d|m^2 \geq |d|.
    \end{equation*}
    This proves~\ref{it:lower-imag}.

    To prove~\ref{it:upper-imag}, let $1/2 > \varepsilon > 0$. Let
    $\Omega = \Omega_\varepsilon$ be the interior of the square with lower right corner at
    $i$ given by:
    \begin{equation*}
        \brk{\tau \in \CC : -\varepsilon\leq \re(\tau) \leq 0,\  1 \leq \im(\tau) \leq 1 + \varepsilon} \subseteq \FD.
    \end{equation*}
    From \Cref{cor:duke-imaginary}, there exists $N_\varepsilon > 0$ such that if
    $d < 0$ is a fundamental discriminant with $|d| > N_\varepsilon$, then we can find a
    Heegner point $\tau_{[a,b,c]} \in \Lambda_d \cap \Omega$. Since
    \begin{equation*}
        \re(\tau_{[a,b,c]}) = -b/2a, \quad \im(\tau_{[a,b,c]}) = \sqrt{|d|}/2a,
    \end{equation*}
    it follows that
    \begin{equation*}
      a > 0, \quad 0 \leq \frac{b}{2a} \leq \varepsilon, \quad 1 \leq \frac{\sqrt{|d|}}{2a} \leq 1 + \varepsilon .
    \end{equation*}
    From the third inequality above, it follows that
    $|a| = a \leq \sqrt{|d|/4}$ and with the second inequality, it follows that
    $|b| = b \leq 2 \varepsilon a \leq a \leq \sqrt{|d|/4}$. Putting everything together,
    \begin{align*}
        |c| = c &= \dfrac{b^2 + |d|}{4a} \\
        &\leq \dfrac{(2\varepsilon a)^2 + |d|}{4a} \\
        &= \varepsilon^2 a + \dfrac{\sqrt{|d|}}{2a} \cdot \sqrt{|d|/4} \\
        &\leq \varepsilon^2 \sqrt{|d|/4} + \paren{1 + \varepsilon}\sqrt{|d|/4} = \paren{1 + \varepsilon + \varepsilon^2} \sqrt{|d|/4}.
    \end{align*}
    The result follows by observing that $\Ht\paren{\QQ(\sqrt{d})}$ equals the
    minimum value of $\max(|a|,|b|,|c|)$, as $f =[a,b,c]$ ranges over all forms
    $f \in V_2(\ZZ)$ with discriminant $d$.
\end{proof}

\subsection{The unit tangent bundle}
\label{sec:T1-HH}
Recall that the \cdef{unit tangent bundle} of a two-dimensional Riemannian
manifold $M$, denoted $T_1M$, is the closed submanifold of the tangent bundle
$TM$ consisting of pairs $(z, \vec v)$ where $\vec v\in T_zM$ is a unit tangent
vector based at $z\in M$. The projection $T_1M \to M$ is an $\SS^1$-bundle, where
$\SS^1\colonequals \brk{z \in \CC : \|z\| = 1}$ is the unit circle. In this
section, we recall how $\PSL_2(\RR)$ is identified with the unit tangent bundle
of the upper-half plane $\HH$.

\medskip Recall that $\PSL_2(\RR)$ acts transitively by isometries on $\HH$.
For every $\gamma \in \PSL_2(\RR)$ we have the commutative square
  \begin{equation*}
    \begin{tikzcd}
      T_1\HH \arrow[r, "\dot\gamma"] \arrow[d] & \T_1\HH \arrow[d] \\
      \HH \arrow[r, "\gamma"'] & \HH,
    \end{tikzcd}
  \end{equation*}
  where $\dot\gamma$ is the differential of $\gamma$. This diagram gives rise to an
  action of $\PSL_2(\RR)$ on $T_1\HH = \HH\times\SS^1$. Indeed, take
  $\gamma \in \PSL_2(\RR)$ and $(\tau, \vec v) \in \HH\times\SS^1$. Let
  $\alpha(t)$ be a unit-speed geodesic satisfying $\alpha(0) =\tau$ and
  $\dot\alpha(0) = \vec v$. A calculation shows that
\begin{equation*}
  \dot{(\gamma\alpha)}(t)= \dfrac{\dot\alpha(t)}{j(\gamma,\alpha(t))^2},
\end{equation*}
where $j(\gamma,z)=cz+d$ is the automorphy factor of
$\gamma=(\begin{smallmatrix}a & b\\ c & d\end{smallmatrix})$. It follows that the
action of $\PSL_2(\RR)$ on $\HH\times\SS^1$ is given by:
\begin{equation*}
    \gamma\cdot (\tau, \vec v) = \left(\gamma\cdot \tau, \dfrac{\|j(\gamma,\tau)\|^2}{j(\gamma,\tau)^2}\vec v\right).
\end{equation*}

This action is simply transitive, and we obtain an isomorphism
$\HH\times \SS^1 \cong \PSL_2(\RR)$ by fixing a base point to act on. We choose the
point $(i,i) \in \HH\times\SS^1$ and obtain:
  \begin{equation*}
    \PSL_2(\RR) \to \HH\times \SS^1, \quad \gamma \mapsto \gamma\cdot(i,i) = \paren{\gamma\cdot i, \dfrac{\|j(\gamma,i)\|^2}{j(\gamma,i)^2} i}.
  \end{equation*}

  Under this identification, the unit-speed geodesic $ie^t$ on $\HH$ connecting
  $0$ and $\infty$ corresponds to the geodesic
\begin{equation*}
  \RR \to \PSL_2(\RR), \quad t \mapsto
  \begin{pmatrix}
        e^{t/2} & 0 \\
        0 & e^{-t/2}
    \end{pmatrix}.
  \end{equation*}
  The \cdef{geodesic flow} on $\PSL_2(\RR)$ is the smooth group action of the
  additive group $g\colon \PSL_2(\RR)\times \RR \to \PSL_2(\RR)$ given by
  \begin{equation*}
    (\gamma, t) \mapsto g_t(\gamma) \colonequals \gamma\cdot
      \begin{pmatrix}
        e^{t/2} & 0 \\
        0 & e^{-t/2}
    \end{pmatrix}.
  \end{equation*}
  We also have the two \cdef{horocycle flows}
  $h^+,h^- \colon \PSL_2(\RR)\times \RR \to \PSL_2(\RR)$ given by
    \begin{align*}
    (\gamma,u) \mapsto h^+_u(\gamma) \colonequals \gamma\cdot
      \begin{pmatrix}
        1 & u \\
        0 & 1
      \end{pmatrix}, \qquad
    (\gamma,v)\mapsto h^-_s(\gamma) \colonequals \gamma\cdot
      \begin{pmatrix}
        1 & 0 \\
        s & 1
    \end{pmatrix}.
    \end{align*}
    \begin{figure}[ht]
      \centering
      \includegraphics[width=0.6\textwidth]{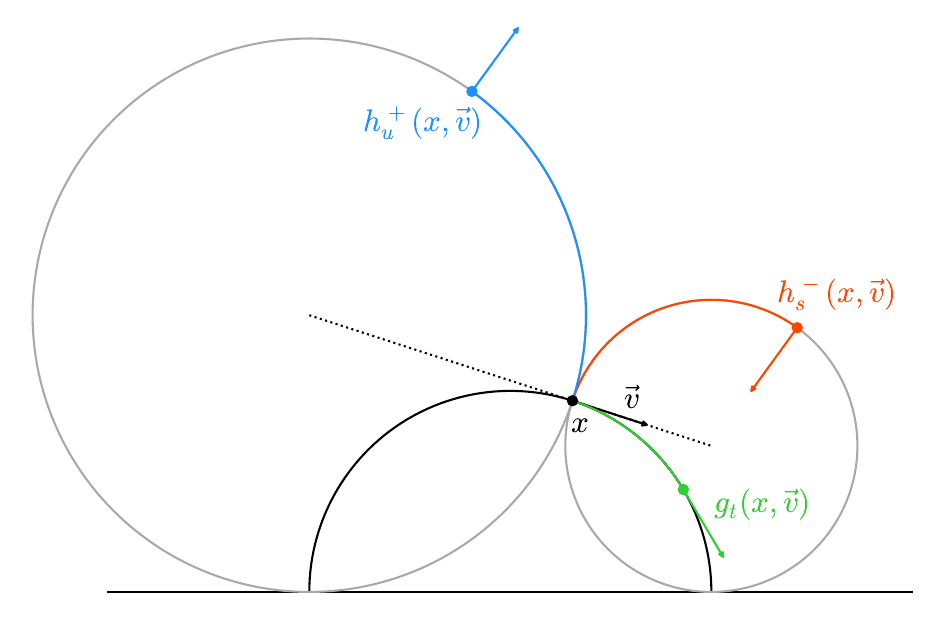}
      \caption{Geometric interpretation of the geodesic and horocycle flows on
        $T_1\HH =\PSL_2(\RR)$.}
      \label{fig:flows}
    \end{figure}

  \begin{remark}
    There are several ways to think about $T_1\mathcal{Y}(1)$, the unit tangent bundle of
    the modular surface (see \cite[Section 1.3]{ELMV12}). We will identify
    $T_1\mathcal{Y}(1)$ with $\PSL_2(\ZZ)\backslash \PSL_2(\RR)$, the space of lattices in
    $\RR^2$ with covolume one.
  \end{remark}

\subsection{Duke's theorem for positive discriminants} 
\label{sec:duke-real}
Let $d \in \DD$ be a positive fundamental discriminant. Consider the real
quadratic field $\QQ(\sqrt{d})$ embedded in $\CC$ so that $\sqrt{d}>0$. Suppose
that $[a,b,c] \in V_2(\ZZ)$ is a quadratic form with discriminant
$d = b^2 - 4ac > 0$ and $a\neq 0$. By replacing the form with its negative, if
necessary, we may assume that $a>0$. This form gives rise to a pair of real
numbers
\begin{equation*}
  \bar\tau_{[a,b,c]}\colonequals \dfrac{-b-\sqrt{d}}{2a} < \tau_{[a,b,c]} \colonequals \dfrac{-b + \sqrt{d}}{2a} \in \QQ(\sqrt{d})\subset \RR.
\end{equation*}

There is a unique unit-speed geodesic semi-circle in $\HH$ connecting
$\bar\tau$ to $\tau$ on $\HH$, given by the formula
\begin{equation*}
  t \mapsto g_t(\alpha_{[a,b,c]})\cdot i
\end{equation*}
where
\begin{equation*}
  \alpha_{[a,b,c]} \colonequals
  \sqrt{a/\sqrt{d}}
  \begin{pmatrix}
    \tau_{[a,b,c]} & \bar\tau_{[a,b,c]} \\
    1 & 1
  \end{pmatrix} \in \PSL_2(\RR).
\end{equation*}

By the discussion in \Cref{sec:T1-HH}, we can lift the geodesic
$\alpha_{[a,b,c]}(t)$ to the unit tangent bundle $T_1\HH = \PSL_2(\RR)$ via
\begin{equation*}
  \alpha_{[a,b,c]}(t) \colonequals g_t(\alpha_{[a,b,c]}).
\end{equation*}

Finally, $\alpha_{[a,b,c]}(t)$ descends to a closed geodesic
$[\alpha_{[a,b,c]}(t)] \colonequals \PSL_2(\ZZ)\alpha_{[a,b,c]}(t)$ on the unit tangent
bundle of the modular surface
$T_1\mathcal{Y}(1) = \PSL_2(\ZZ)\backslash \PSL_2(\RR)$. This geodesic orbit is compact and
depends only on the $\PSL_2(\ZZ)$-orbit of the form $[a,b,c]$. Taking the union
over all $\PSL_2(\ZZ)$-orbits of forms $[a,b,c]$ of discriminant $d$ geodesics,
we obtain the finite set
\begin{equation*}
  \Lambda_d \colonequals \bigcup_{[a,b,c]}
  \brk{[\alpha_{[a,b,c]}(t)]} \subset T_1\mathcal{Y}(1).
\end{equation*}
This collection of compact orbits of the geodesic flow carries a natural
probability measure, invariant under the geodesic flow, which we denote by
$\dd\mu_d$.

\begin{theorem}
    \label{thm:duke-real}
    Let $\dd\mu_{\mathrm{L}}$ be the probability Haar measure of
    $T_1\mathcal{Y}(1) = \PSL_2(\ZZ)\backslash\PSL_2(\RR)$. Let $\varphi\colon T_1\mathcal{Y}(1) \to \RR$ be a
    continuous and compactly supported function. Then, as $d \to \infty$ among
    the fundamental discriminants,
    \begin{equation*}
      \int_{\Lambda_d}\varphi(t)\dd\mu_d \to \int_{T_1\mathcal{Y}(1)} \varphi(t)\dd\mu_{\mathrm{L}}.
    \end{equation*}
  \end{theorem}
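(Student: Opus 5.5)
This is the equidistribution theorem of Duke \cite{Duke88}, lifted from the modular surface to its unit tangent bundle by Chelluri \cite{Chelluri2004} (see also \cite[p.~4]{ELMV12}). In the paper I would cite it rather than reprove it. The proof I would follow is the standard spectral one, via Weyl's criterion.

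\textbf{Reduction to eigenfunctions.} Both sides of the claimed limit are positive linear functionals in $\varphi$, and the $\mu_d$ are probability measures. So it suffices to check convergence on a family of functions whose span is dense in $C_c(T_1\mathcal{Y}(1))$ in the sup norm. I would also need tightness of $\{\mu_d\}$, i.e.\ no escape of mass into the cusp. This follows from the same estimates applied to incomplete Eisenstein series, or from a direct count: a closed geodesic of discriminant $d$ spends only an $O(1/T)$-fraction of its length above height $T$.

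\textbf{Spectral decomposition.} I would decompose $L^2(\PSL_2(\ZZ)\backslash\PSL_2(\RR))$ into $K=\mathrm{SO}_2$-types of weight $2k$. In each weight this gives the constants, weight-$2k$ Maass cusp forms $\phi$ (obtained from weight-$0$ Maass forms and holomorphic forms by raising and lowering operators), and Eisenstein series. The constant function gives the main term $\int\varphi\dd\mu_{\mathrm{L}}$. It then remains to show, for each fixed nonconstant eigenfunction $\phi$ of weight $2k$, that the geodesic period satisfies
\[
  \frac{1}{\sum_{[a,b,c]}\ell_{[a,b,c]}}\sum_{[a,b,c]}\int_{0}^{\ell_{[a,b,c]}}\phi\bigl(g_t(\alpha_{[a,b,c]})\bigr)\dd t \longrightarrow 0 .
\]
Here the sum runs over the $\PSL_2(\ZZ)$-orbits of forms of discriminant $d$, and $\ell_{[a,b,c]}$ denotes the length of the corresponding closed geodesic. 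By Siegel's theorem, the denominator $\sum\ell = 2h(d)\log\varepsilon_d$ is $\gg d^{1/2-\epsilon}$. For Eisenstein series the period reduces to $L$-values of Hecke characters of $\QQ(\sqrt d)$, and convexity-breaking suffices.

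\textbf{Periods as Fourier coefficients.} The heart of the argument is the Katok--Sarnak formula and its weight-$2k$ analogue. It expresses the sum of periods of $\phi$ over all geodesics of discriminant $d$ as (a constant times) $d^{1/4}\,\rho(d)$, where $\rho(d)$ is the $d$-th Fourier coefficient of a half-integral weight form attached to $\phi$ by the Shimura--Shintani--Waldspurger correspondence. The trivial bound only gives $\rho(d)\ll d^{1/4+\epsilon}$, which exactly fails. The main obstacle is therefore Iwaniec's nontrivial bound $\rho(d)\ll d^{1/4-\delta}$ for some $\delta>0$, obtained via the Kuznetsov/Petersson formula in half-integral weight together with Kloosterman sum estimates, as Duke did. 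For nonzero weight $2k$, which is Chelluri's contribution, one must track the dependence on $k$ and verify that the same saving persists. Combining this saving with Siegel's lower bound $\sum\ell\gg d^{1/2-\epsilon}$ shows that each normalized period tends to $0$. Tightness plus density of the finite linear combinations then gives the theorem for all continuous compactly supported $\varphi$.
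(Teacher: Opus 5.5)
Your proposal agrees with the paper: the paper gives no proof of this theorem and simply cites Chelluri~\cite{Chelluri2004} in the formulation of \cite[Theorem~1.3]{ELMV12}, and it only uses the density consequence, Corollary~\ref{cor:duke-real}. Your spectral outline is the standard Duke--Iwaniec--Chelluri argument.

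One correction to that outline: the ``direct count'' route to tightness is false as stated. A single closed geodesic can spend almost all of its length high in the cusp. For squarefree $d=m^2+4$ with $m$ odd, the class of $[1,m,-1]$ has reduced cycle $[1,m,-1],[-1,m,1]$, so its geodesic of length $\approx 4\log m$ is essentially two excursions to height $\approx \sqrt{d}/2$; the fraction of its length above any fixed height $T$ therefore tends to $1$, not $O(1/T)$. Non-escape of mass holds only for the whole packet $\Lambda_d$, and it is proved with the same subconvexity input for $L(s,\chi_d)$ as the Eisenstein terms, not by an elementary count. For compactly supported $\varphi$ you need no tightness input at all, because the $\mu_d$ are probability measures and sup-norm approximation by bounded test functions suffices.
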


  The previous theorem is originally due to Chelluri \cite{Chelluri2004} and is
  stated here using the notation of \cite[Theorem~1.3]{ELMV12}. For our
  purposes, we need only the following corollary.

\begin{corollary}
  \label{cor:duke-real}
  For any nonempty open
  $\Omega \subset T_1\mathcal{Y}(1) = \PSL_2(\ZZ)\backslash\PSL_2(\RR)$, there exists
  $N_\Omega > 0$ such that for every fundamental discriminant $d > N_\Omega$, there
  exists a quadratic form $[a,b,c]\in V_2(\ZZ)$ with discriminant $d$ such that
  \begin{equation*}
    [g_t(\alpha_{[a,b,c]})] \in \Omega,
  \end{equation*}
  for some $t\in \RR$.
\end{corollary}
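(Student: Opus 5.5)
We deduce \Cref{cor:duke-real} from \Cref{thm:duke-real} by testing against a bump function. Given a nonempty open $\Omega \subset T_1\mathcal{Y}(1)$, we fix a nonzero continuous function $\varphi \colon T_1\mathcal{Y}(1) \to [0,1]$ with compact support contained in $\Omega$. Such a function exists because $T_1\mathcal{Y}(1)=\PSL_2(\ZZ)\backslash\PSL_2(\RR)$ is a locally compact Hausdorff space (a manifold quotient, or orbifold). Urysohn's lemma then applies to a small compact ball inside $\Omega$.

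Since the Haar measure $\mu_{\mathrm{L}}$ has full support, every nonempty open set has positive measure. Hence
\[
c \colonequals \int_{T_1\mathcal{Y}(1)} \varphi \dd\mu_{\mathrm{L}} > 0 .
\]
By \Cref{thm:duke-real}, there is an $N_\Omega$ such that for every fundamental discriminant $d > N_\Omega$ we have $\int_{\Lambda_d} \varphi \dd\mu_d > c/2 > 0$.

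Suppose $d > N_\Omega$. Because $\varphi \geq 0$ and the integral is positive, $\varphi$ cannot vanish identically on the support of $\mu_d$. That support lies in $\Lambda_d$, which is the union of the closed geodesic orbits $[g_t(\alpha_{[a,b,c]})]$. So some point of $\Lambda_d$ has $\varphi > 0$, and such a point lies in $\Omega$. By the definition of $\Lambda_d$, this point has the form $[g_t(\alpha_{[a,b,c]})]$ for some form $[a,b,c]\in V_2(\ZZ)$ of discriminant $d$ and some $t \in \RR$. This is exactly the conclusion of \Cref{cor:duke-real}.

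The only point needing care is the claim that $\mu_d$ is supported on $\Lambda_d$, or at least assigns it full mass. This holds by construction: $\mu_d$ is the normalized sum of the arc-length measures along the finitely many closed geodesics making up $\Lambda_d$. Everything else is routine.
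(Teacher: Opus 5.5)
Your proof is correct and is the deduction the paper intends; the paper states this corollary without proof as an immediate consequence of \Cref{thm:duke-real}, and your argument supplies it: pairing the equidistribution statement with a nonnegative bump function supported in $\Omega$, whose $\mu_{\mathrm{L}}$-integral is positive, forces $\Lambda_d$ to meet $\Omega$ for all large fundamental $d$. As a cosmetic remark, $\PSL_2(\ZZ)\backslash\PSL_2(\RR)$ is an honest manifold, since a discrete subgroup acts freely and properly by left multiplication, so no orbifold caveat is needed when you invoke Urysohn's lemma.
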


\subsection{Heights of real quadratic fields} We prove the version of
\Cref{lem:imaginary-ht-estimate} for positive fundamental discriminants. The
strategy of the proof is exactly the same: we find a convenient open
neighborhood of a ``binary form'' with minimal height (for a given
discriminant), say $f_{\min} = x^2 + xy - y^2$, and use Chelluri's result to
show that we can find binary forms $f$ of any discriminant $d\in\DD$ such that
$f/\sqrt d$ is close to $f_{\min}$, and hence $\Ht(f)$ is close to
$\sqrt{d}\cdot\Ht(f_{\min})$, as $d \to \infty$.
\begin{figure}[ht]
  \centering
  \includegraphics[width=0.5\textwidth]{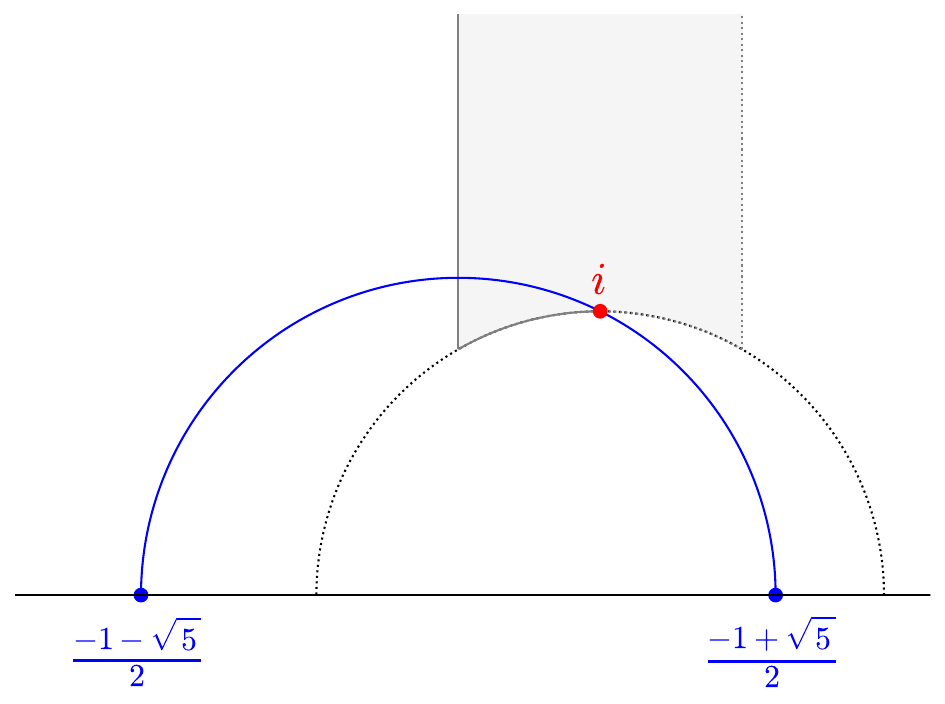}
  \caption{Geodesic corresponding to the quadratic form {\color{blue}$x^2+xy-y^2$} of
    discriminant $5$.}
\end{figure}

\begin{lemma}
    \label{lem:real-ht-estimate}
    For $d \in \DD$ positive, we have that
    \begin{enumerate}[label=(\roman*)]
    \item \label{it:lower-real}
      $\sqrt{\tfrac{d}{5}} \leq \Ht\paren{\QQ(\sqrt{d})},$ and
    \item \label{it:upper-real} $\Ht\paren{\QQ(\sqrt{d})} \leq \paren{1 + o(1)}\sqrt{\tfrac{d}{5}},$
      as $d \to \infty$.
    \end{enumerate}
\end{lemma}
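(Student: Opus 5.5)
The argument mirrors \Cref{lem:imaginary-ht-estimate}: an elementary inequality gives the lower bound, and \Cref{cor:duke-real} supplies forms near the extremal one for the upper bound.

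\emph{Lower bound.} Let $f=[a,b,c]\in V_2(\ZZ)$ with $K_f\cong\QQ(\sqrt d)$. As in the imaginary case, $R_f$ is an order, so $b^2-4ac=m^2d\ge d$ for some integer $m\geq 1$. Put $H=\Ht(f)$. We have $b^2\le H^2$, and we want to show $b^2-4ac\le 5H^2$. If $ac\ge 0$ then $b^2-4ac\le b^2\le H^2$. If $ac<0$ then $|4ac|\le 4H^2$, so $b^2-4ac\le H^2+4H^2=5H^2$. In either case $d\le 5H^2$, i.e.\ $H\ge\sqrt{d/5}$, which is \ref{it:lower-real}.

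\emph{Upper bound.} Fix $\varepsilon>0$ and consider the extremal form $f_{\min}=x^2+xy-y^2$ of discriminant $5$, with height $1=\sqrt{5/5}$. Its normalized geodesic is $\alpha_{[1,1,-1]}\in\PSL_2(\RR)$. The map
\[
\PSL_2(\RR)\to\{f\in V_2(\RR):\Disc f=1\},\qquad \gamma\mapsto (x^2-y^2)^{\gamma^{-1}}\ \text{(up to sign)},
\]
sends $\alpha_{[a,b,c]}$ to $\pm[a,b,c]/\sqrt d$. Indeed, $\alpha_{[a,b,c]}$ has columns proportional to the root vectors $(\tau,1)$ and $(\bar\tau,1)$, and the scalar $\sqrt{a/\sqrt d}$ is chosen so that the determinant is $1$. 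This should be a short computation: one checks $a(x-\tau y)(x-\bar\tau y)$ is the transform of $\sqrt d\,xy$, or of $\sqrt d\,(x^2-y^2)$ after rotating. The map is continuous. Moreover, moving along the geodesic flow, $g_t$, replaces the form by a diagonal rescaling of the same form, $xy\mapsto xy$, so each point of the closed geodesic $[\alpha_{[a,b,c]}(t)]$ corresponds to a $\PSL_2(\ZZ)$-translate of $[a,b,c]/\sqrt d$, i.e.\ to an integral form of discriminant $d$ divided by $\sqrt d$.

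Now let $U\subset\PSL_2(\RR)$ be a small open neighborhood of $\alpha_{[1,1,-1]}$ such that every $\gamma\in U$ corresponds to a real form $h$ of discriminant $1$ within $\varepsilon$ of $f_{\min}/\sqrt5$ in the sup norm on coefficients. Then $\Ht(h)\le (1+\varepsilon)/\sqrt5$ after shrinking $U$. Let $\Omega$ be the (open) image of $U$ in $T_1\mathcal{Y}(1)$. By \Cref{cor:duke-real}, for $d>N_\Omega$ there is $[a,b,c]$ of discriminant $d$ and $t\in\RR$ with $\delta\, g_t(\alpha_{[a,b,c]})\in U$ for some $\delta\in\PSL_2(\ZZ)$. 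The corresponding form is $g/\sqrt d$, where $g$ is an integral form of discriminant $d$ in the $\PSL_2(\ZZ)$-orbit of $[a,b,c]$. Hence $\Ht(g)\le(1+\varepsilon)\sqrt{d/5}$ and $K_g\cong\QQ(\sqrt d)$, giving \ref{it:upper-real}.

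\emph{Main obstacle.} The key point is the bookkeeping in the previous paragraph. One must verify that points along the closed geodesic (not just its starting point $\alpha_{[a,b,c]}$) correspond to integral forms of discriminant $d$ scaled by $1/\sqrt d$. This requires checking that the geodesic flow acts on the associated form only by the diagonal torus that stabilizes $xy$ up to rescaling $x$ and $y$ reciprocally, so that the form itself is unchanged. If this fails, the fallback is to invoke equidistribution with a $g_t$-invariant neighborhood. Concretely, one takes $\Omega$ to be the image of $U\cdot\{g_t:|t|<\eta\}$ and notes that the corresponding set of forms is still a neighborhood of $f_{\min}/\sqrt5$ in the discriminant-$1$ hyperboloid.
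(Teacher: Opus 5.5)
Your proof is correct and is essentially the paper's argument. The lower bound is the same inequality $d\le b^2-4ac\le |b|^2+4|a||c|\le 5\Ht(f)^2$. For the upper bound you apply \Cref{cor:duke-real} to a neighborhood of $\alpha_{[1,1,-1]}$, and your orbit map $\gamma\mapsto (xy)^{\gamma^{-1}}$, which sends $\alpha_{[a,b,c]}$ to $-[a,b,c]/\sqrt{d}$, is a coordinate-free substitute for the paper's explicit $h^-_s\circ g_t\circ h^+_u$ computation recovering $\tau_{[a,b,c]},\bar\tau_{[a,b,c]}$ from column ratios.

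Do use $xy$ consistently as the base form, not $x^2-y^2$, which has discriminant $4$ and is not fixed by the diagonal torus. With $xy$ your ``main obstacle'' disappears and no fallback is needed: $(xy)^{\mathrm{diag}(e^{-t/2},e^{t/2})}=xy$, and left multiplication by $\delta\in\PSL_2(\ZZ)$ only replaces the form by its $\delta^{-1}$-translate.
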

\begin{proof}
  By definition, there exists $f = [a,b,c] \in V_2(\ZZ)$ with
  $K_f = \QQ(\sqrt{d})$ and $\Ht\paren{f}= \Ht\paren{\QQ(\sqrt{d})}$. Then
  $\Disc(f)=b^2-4ac$ must be $d$ times a perfect square. In particular, since
  $d$ is positive,
  \[\abs{d} =d \leq \Disc(f) = b^2-4ac \leq \abs{b}^2 + 4 \abs{a} \abs{c} \leq
    \Ht\paren{f}^2 + 4 \Ht\paren{f}^2= 5 \Ht(f)^2 =5
    \Ht\paren{\QQ(\sqrt{d})}^2\] which implies~\ref{it:lower-real}.

  To prove~\ref{it:upper-real}, let $1 > \varepsilon > 0$, and consider the matrix
  \begin{equation*}
    \alpha \colonequals \alpha_{[1,1,-1]} = \sqrt{1/\sqrt{5}}
    \begin{pmatrix}
      \tau & \bar\tau \\
      1 & 1
    \end{pmatrix} \in \PSL_2(\RR), \quad \text{ where } \quad \tau \colonequals \frac{-1+\sqrt{5}}{2}.
  \end{equation*}
  Take the following open neighborhood of $\alpha$ in $\PSL_2(\RR)$ defined using
  the flows $h^+,g$, and $h^-$
  \begin{equation*}
    U_\varepsilon \colonequals \brk{(h_s^-\circ g_t \circ h_u^+)(\alpha) :
    s,t,u \in (-\varepsilon, \varepsilon)}.
\end{equation*}
Let $\Omega_\varepsilon$ be the projection of $U_\varepsilon$ to
$\PSL_2(\ZZ)\backslash \PSL_2(\RR)$. By \Cref{cor:duke-real}, there exists
$N_\varepsilon > 0$ such that for every fundamental discriminant
$d > N_\varepsilon$, we can find a form $f = [a,b,c]\in V_2(\ZZ)$ of discriminant
$d$ with $[g_{t_0}(\alpha_{[a,b,c]})] \in \Omega_\varepsilon$ for some
$t_0\in \RR$. Thus, there exists some $\gamma \in \PSL_2(\ZZ)$ such that
$\gamma g_{t_0}(\alpha_{[a,b,c]}) = g_{t_0}(\gamma\alpha_{[a,b,c]}) \in U_\varepsilon$. But
$\gamma\alpha_{[a,b,c]} = \alpha_{[a',b',c']}$ where
$[a',b',c'] = f^{\gamma^{-1}}$, so without loss of generality we assume that
$g_{t_0}(\alpha_{[a,b,c]}) \in U_\varepsilon$. This means that there exist
$s,t,u \in (-\varepsilon,\varepsilon)$ such that
$g_{t_0}(\alpha_{[a,b,c]}) = (h_s^-\circ g_t \circ h_u^+)(\alpha)$. Expanding this expression, we
obtain
\begin{align*}
  \sqrt{a/\sqrt{d}}
  \begin{pmatrix}
    \tau_{[a,b,c]} & \bar\tau_{[a,b,c]} \\
    1 & 1
  \end{pmatrix}
  \begin{pmatrix}
    e^{t_0/2} & 0 \\
    0 & e^{-t_0/2}
  \end{pmatrix}
  &= \sqrt{1/\sqrt{5}}
  \begin{pmatrix}
      \tau & \bar\tau \\
      1 & 1
  \end{pmatrix}
  \begin{pmatrix}
      e^{t/2} + use^{-t/2} & ue^{-t/2}\\
      se^{-t/2} & e^{-t/2}
  \end{pmatrix} \\
  &= \dfrac{e^{-t/2}}{\sqrt[4]{5}}
  \begin{pmatrix}
    \tau(e^t + us) + \bar\tau s & \tau u + \bar\tau \\
    e^t + us + s & u + 1
  \end{pmatrix}
\end{align*}
Since the equality above is in $\PSL_2(\RR)$, the ratios of corresponding
entries of these matrices are equal, and the ratios between entries in a
vertical column are preserved by $g_{t_0}$, for small $\varepsilon>0$, we obtain the
identities
\begin{align*}
  \tau_{[a,b,c]} = \dfrac{-b+\sqrt{d}}{2a} &= \dfrac{\tau(e^t + us) + \bar\tau s}{e^t + us + s} = \tau+O(\varepsilon) = \dfrac{-1+\sqrt{5}}{2} + O(\varepsilon), \\
  \bar\tau_{[a,b,c]} =\dfrac{-b-\sqrt{d}}{2a} &= \dfrac{\tau u + \bar\tau}{u+1}                       = \bar\tau + O(\varepsilon) = \dfrac{-1-\sqrt{5}}{2}  + O(\varepsilon).
\end{align*}
Using these identities in combination with
\begin{equation*}
  \tau_{[a,b,c]}-\bar\tau_{[a,b,c]} = \dfrac{\sqrt{d}}{a}, \quad
  \tau_{[a,b,c]}+\bar\tau_{[a,b,c]} = -\dfrac{b}{a}, \quad
  \tau_{[a,b,c]}\bar\tau_{[a,b,c]} = \dfrac{c}{a},
\end{equation*}
one follows the same argument as in the proof of
\Cref{lem:imaginary-ht-estimate} to conclude that
$$
  \max\paren{\abs{a},\abs{b},\abs{c}} = (1 +o(1))\sqrt{d/5}\,.
$$
The result then follows by the inequality
$\Ht\paren{\QQ(\sqrt{d})} \leq \Ht([a,b,c])$.
\end{proof}
We note in passing that the previous lemma proves the limit conjectured
following the proof of Proposition~2 in Section~3 of~\cite{Ruppert98}.

\subsection{Proof of \texorpdfstring{\Cref{thm:quadratic-case}}{Theorem
    quadratic case}} We consider two counting functions for quadratic number
fields:
\begin{align*}
  N_2(\Ht < X) &\colonequals \#\brk{K \textnormal{ quadratic number field} : \Ht(K) < X} = \#\brk{d \in \DD : \Ht\paren{\QQ(\sqrt{d})} < X},\\
  N_2(\Disc < Y) &\colonequals \#\brk{K \textnormal{ quadratic number field} : |\Disc(K)| < Y} = \#\brk{d \in \DD : |d| < Y}.
\end{align*}
Partition $\DD = \DD^+ \sqcup \DD^-$ according to sign, and write
\begin{equation*}
    N_2(\Ht < X) = N_2^+(\Ht < X) + N_2^-(\Ht < X),
\end{equation*}
where
$N_2^{\pm}(\Ht < X) = \#\brk{d \in \DD^\pm : \Ht\paren{\QQ(\sqrt{d})} < X}$. From
\Cref{lem:real-ht-estimate}, we have that
\begin{align*}
  N_2^+(\Disc < 5X^2) = N_2^+(\Ht < X) + \#\brk{d \in \DD^+ : \sqrt{d/5} \leq X \leq \Ht\paren{\QQ(\sqrt{d})}} \sim N_2^+(\Ht <  X), \text{ as } X \to \infty.
\end{align*}

Similarly, \Cref{lem:imaginary-ht-estimate} gives
$N_2^-(\Disc,4X^2) \sim N_2^-(\Ht,X)$. We conclude that
\begin{equation*}
  N_2(\Ht,X) \sim N_2^+(\Disc,5X^2) + N_2^-(\Disc,4X^2) \sim \frac{3}{\pi^2}(5X^2) + \frac{3}{\pi^2}(4X^2) = \frac{27}{\pi^2}\cdot X^2 
\end{equation*}\hfill $\square$

\section{Special forms}
\label{sec:special-forms}

For any subset $S$ of $V_n(\ZZ)$, let $N(S,X)$ be the number of forms $f\in S$
with $\Ht(f)\leq X$. For $S\subseteq T\subseteq V_n(\ZZ)$ with
$T\neq\emptyset$, we say that $S$ has \cdef{density} $c$ inside $T$ if
$\lim_{X\to\infty} N(S,X)/N(T,X) = c$. We say that a binary $n$-ic form is
\cdef{primitive} if the greatest common divisor of the coefficients of $f$ is
$1$.

Let $V_n'(\ZZ)$ be the set of primitive binary $n$-ic forms $f \in V_n(\ZZ)$ such
that $K_f$ is a number field whose Galois closure has Galois group $S_n$.

We will show that for small $\varepsilon>0$ and large $s\geq1$, the following subsets have
small density in $V_n(\ZZ)$:
\[
B(\varepsilon) \coloneq \{f\in V_n(\ZZ) : |\Disc(f)|<\varepsilon\Ht(f)^{2(n-1)}\},
\]
\[
C(s) \coloneq \{f\in V_n(\ZZ) : a^2 \mid \Disc(f)\textnormal{ for some }a>s\}.
\]

For $B(\varepsilon)$, this will be done in \Cref{ss-small-discriminant}. For
$C(s)$, the crucial special case is when $a$ is a prime power, which will be
handled in \Cref{ss-small-prime} by $p$-adic integration methods. Using this,
the general case will be handled in \Cref{ss-large-square}.

\subsection{Forms with smaller Galois groups}

\begin{lemma}\label{lem:usually-full-galois}
  The set $V_n'(\ZZ)$ (defined above) has density $1$ inside the set of all
  primitive binary $n$-ic forms.
\end{lemma}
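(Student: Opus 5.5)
The plan is to show that the complement of $V_n'(\ZZ)$ inside the primitive forms has density $0$. We split the complement into two parts: forms with $f_0=0$, and forms with $f_0\neq0$ whose dehomogenization $f(x,1)\in\ZZ[x]$ is reducible or has Galois group smaller than $S_n$. The forms with $f_0=0$ number $O(X^n)$, which is negligible against the $\asymp X^{n+1}$ primitive forms of height at most $X$. (The primitive forms have density $1/\zeta(n+1)>0$ among all of $V_n(\ZZ)$, so it suffices to show that the bad set has density $0$ inside all of $V_n(\ZZ)$.) When $f_0\neq0$, $K_f=\QQ[x]/f(x,1)$ is a number field with Galois closure of group $S_n$ exactly when $f(x,1)$ is irreducible with Galois group $S_n$. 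So the task reduces to a quantitative Hilbert irreducibility statement for the generic polynomial $f_0x^n+f_1x^{n-1}+\cdots+f_n$ with coefficients in a box.

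For this we use the large sieve, following Gallagher's argument. Fix a prime $p$ and a conjugacy class type: partitions $\lambda$ of $n$ correspond to factorization types mod $p$. By Chebotarev/Frobenius, if $f(x,1)$ has Galois group $H\subsetneq S_n$ (including the reducible case, where $H$ is intransitive), then for every $p\nmid f_0\Disc(f)$ the factorization type of $f$ mod $p$ is the cycle type of some element of $H$. By Jordan's lemma, no proper subgroup meets every conjugacy class of $S_n$. It is also a classical fact that a subgroup containing an $n$-cycle, an $(n-1)$-cycle, and a transposition times an $(n-2)$-cycle-type element (or similar) is all of $S_n$. Hence for each bad $f$ there is one of finitely many cycle types $\lambda$ such that $f$ mod $p$ never has type $\lambda$ for any good prime $p$. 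Among forms over $\FF_p$ with $f_0\neq0$, the proportion with separable factorization type $\lambda$ is $\ge c_\lambda>0$ uniformly for large $p$, since it equals the proportion of elements of type $\lambda$ in $S_n$ up to $O(1/p)$.

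The large sieve in $n+1$ variables then gives: the number of $f$ of height $\le X$ avoiding a set of residues of density $\ge c$ mod $p$ for all primes $p\le Q$ is $\ll X^{n+1}/L$, where $L\gg \sum_{p\le Q} c/(1-c)\gg Q/\log Q$. We take $Q=X^{1/2}$. Primes dividing $f_0\Disc(f)$ cause no trouble here: we simply sieve on the residue classes mod $p$ where $f$ has the separable type $\lambda$, and such residues automatically satisfy $p\nmid f_0\Disc f$. The bound is therefore $O(X^{n+1/2}\log X)=o(X^{n+1})$. Summing over the finitely many $\lambda$ finishes the argument.

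The main point requiring care is the group-theoretic step, namely the guarantee that a bad $f$ misses some fixed type. This needs the finite list of types such that every proper subgroup misses at least one of them, and Jordan's lemma alone suffices for that, since there are finitely many subgroups. Alternatively, one can simply cite the known quantitative Hilbert irreducibility results (Gallagher; Cohen; S.~D.~Cohen's theorem for polynomials with coefficients in a box), which directly give the $O(X^{n+1/2}\log X)$ bound.
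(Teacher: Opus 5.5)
Your proposal is correct and follows the same route as the paper. You discard the hyperplane $f_0=0$ and apply quantitative Hilbert irreducibility to $f(x,1)$; the paper does this simply by citing Cohen's Theorem~2.1, which is the $O(X^{n+1/2}\log X)$ bound you offer as your alternative, and your large-sieve sketch in the style of Gallagher is a reproof of that cited bound. Your observation that the bad set has density $0$ in all of $V_n(\ZZ)$, and hence in the positive-density subset of primitive forms, makes explicit a step the paper leaves implicit.
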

\begin{proof}
  For the binary $n$-ic forms $f$ with $f_0 \neq 0$, the number field $K_f$ is
  isomorphic to $\QQ[x]/f(x,1)$, so this is a direct application of Hilbert's
  irreducibility (see \cite[Theorem 2.1]{Cohen81}). Since $f_0 = 0$ is a
  hyperplane in $V_n$, this is enough.
\end{proof}

\subsection{Forms whose discriminant is small}\label{ss-small-discriminant}

\begin{lemma}
\label{lem:usually-large-disc}
We have $N(B(\varepsilon),X) = \beta(\varepsilon) X^{n+1} + O_\varepsilon(X^n)$ for
$X\geq1$ for some $\beta(\varepsilon)$ with $\lim_{\varepsilon\to0}\beta(\varepsilon)=0$.
\end{lemma}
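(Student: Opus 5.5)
The plan is to reduce the count to a lattice-point count in a dilate of a fixed semialgebraic region of $V_n(\RR)$. Set
\[
R(\varepsilon) \colonequals \{f\in V_n(\RR) : \Ht(f)\leq 1,\ |\Disc(f)|<\varepsilon\Ht(f)^{2(n-1)}\}.
\]
Since $\Disc$ is homogeneous of degree $2(n-1)$ and $\Ht$ is homogeneous of degree $1$ (for real scalars $\lambda>0$), the condition defining $B(\varepsilon)$ is invariant under scaling. Hence $N(B(\varepsilon),X)=\#(V_n(\ZZ)\cap X\cdot R(\varepsilon))$ for $X\geq 1$. The form $f=0$ is counted or not according to convention, which changes the count only by $O(1)$.

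Next we apply a Davenport-type lattice point count (Davenport's lemma) to $X\cdot R(\varepsilon)$. The region $R(\varepsilon)$ is bounded and semialgebraic: it is cut out by finitely many polynomial inequalities. Concretely, the condition $\Ht(f)\leq 1$ is given by $|f_i|\leq 1$. The inequality $|\Disc(f)|<\varepsilon\Ht(f)^{2(n-1)}$ is handled by splitting according to which coefficient attains the maximum, with $\Ht(f)^{2(n-1)} = f_i^{2(n-1)}$ on each piece. Therefore every line parallel to a coordinate axis meets $R(\varepsilon)$, and each of its coordinate projections, in a number of intervals bounded independently of $\varepsilon$; the bound depends only on $n$ and the degrees involved. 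Davenport's lemma then gives
\[
N(B(\varepsilon),X)=\vol(R(\varepsilon))X^{n+1}+O(X^n),
\]
where the error term is bounded by the volumes of the projections of $X\cdot R(\varepsilon)$. Those projections lie in $[-X,X]^k$ with $k\leq n$, so the error is $O(X^n)$, uniformly in $\varepsilon$ in fact. We set $\beta(\varepsilon)\colonequals\vol(R(\varepsilon))$.

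It remains to show $\beta(\varepsilon)\to0$. The sets $R(\varepsilon)$ decrease as $\varepsilon$ decreases and are contained in the unit cube, so by dominated (monotone) convergence
\[
\lim_{\varepsilon\to0}\beta(\varepsilon)=\vol\paren{\bigcap_{\varepsilon>0} R(\varepsilon)}.
\]
Now $\bigcap_{\varepsilon>0} R(\varepsilon)\subseteq\{f : \Disc(f)=0\}\cup\{0\}$, and $\Disc$ is a nonzero polynomial (for instance $\Disc(x^n-y^n)\neq0$). The zero set of a nonzero polynomial has Lebesgue measure zero, so the limit is $0$.

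The only step needing care is the uniform applicability of Davenport's lemma, that is, checking that the number of connected components of the fibres is bounded. This follows from the semialgebraic description by a Milnor–Thom/Bézout bound: along any line, the relevant polynomials have degree at most $2(n-1)$. Everything else is formal.
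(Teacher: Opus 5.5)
Your proposal is correct and matches the paper's proof: scaling invariance reduces the count to lattice points in $X\cdot R(\varepsilon)$ for a bounded semialgebraic region, Davenport's lemma gives $\beta(\varepsilon)=\vol(R(\varepsilon))$ with an $O(X^n)$ error, and dominated convergence plus the measure-zero locus $\{\Disc=0\}$ gives $\beta(\varepsilon)\to0$. Your extra remarks (the $f=0$ convention, the piecewise description of $\Ht(f)^{2(n-1)}$, and the claimed uniformity in $\varepsilon$) are harmless refinements that the statement does not require.
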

\begin{proof}
  Let $B_\RR(\varepsilon)$ be the set of forms $f\in V_n(\RR)$ with
  $|\Disc(f)|<\varepsilon\Ht(f)^{2(n-1)}$. This set is invariant under scaling and
  semi-algebraic. By \cite{Davenport-1951}, the number of integer lattice
  points in this set therefore satisfies
  $N(B(\varepsilon),X)=\beta(\varepsilon)X^{n+1}+O_\varepsilon(X^n)$ with
  $\beta(\varepsilon) = \vol(B_\RR(\varepsilon)\cap\{\Ht\leq1\})$. Since
  $\bigcap_{\varepsilon>0} B_\RR(\varepsilon) = \{\Disc=0\}$ has measure $0$, we have
  $\beta(\varepsilon)\to0$ for $\varepsilon\to0$ by the dominated convergence theorem.
\end{proof}

\subsection{Forms whose discriminant is divisible by a large power of a small
  prime}\label{ss-small-prime}

For $K$ an \'{e}tale algebra over $\QQ_p$ (i.e., a product of finite field
extensions of $\QQ_p$), let $\OO_{K}$ be the set of elements in $K$ which are
integral over $\mathbf Z_p$ (i.e., the product of the rings of integers of the
field extensions).

For any $m\geq1$, we let
\[
  \PP^m(K) = \{(x_0,\dots,x_m)\in K^{m+1}\mid x_0,\dots,x_m\textnormal{ generate the
    unit ideal of }K\} / K^\times
\]
and denote its elements by $[x_0:\dots:x_m]$ as usual. Equivalently,
\[
  \PP^m(K) = \{(x_0,\dots,x_m)\in \OO_K^{m+1}\mid x_0,\dots,x_m\textnormal{ generate
    the unit ideal of }\OO_K\} / \OO_K^\times.
\]

The ring of integers $\OO_K$ carries a unique Haar
measure $\mu_{\OO_K}$ satisfying $\mu_{\OO_K}(\OO_K)=1$.
We denote the resulting
($\GL_m(\OO_K)$-invariant) standard measure on $\OO_K^m$ by $\mu_{\OO_K^m}$. We
define a measure $\mu_{\PP^m(K)}$ on $\PP^m(K)$ by
\[
	\mu_{\PP^m(K)}(A) = \mu_{\OO_K^{m+1}}\left(\left\{(x_0,\dots,x_m)\in\OO_K^{m+1}\;\middle|\;
        \begin{matrix}x_0,\dots,x_m\textnormal{ generate the unit ideal of
            $\OO_K$}\\\textnormal{ and } [x_0:\dots:x_m]\in
          A\end{matrix}\right\}\right)\Big/\mu_{\OO_K}(\OO_K^\times).
\]
This measure is $\GL_{m+1}(\OO_K)$-invariant and corresponds to the standard
measure on $\OO_K^m$ under the embedding
\[
	\OO_K^m \hookrightarrow \PP^m(K),\qquad
	(x_1,\dots,x_m) \mapsto [1:x_1:\dots:x_m].
\]
(In particular, the open subset
$\{ [1: x_1:\dots: x_m] \mid x_1,\dots, x_m \in \OO_K\} \cong \OO_K^m$ of
$\PP^m(K)$ has measure $1$.)

Let $\operatorname{Mor}(K,\overline{\QQ}_p)$ be the set of $\QQ_p$-algebra
homomorphisms from $K$ to a fixed algebraic closure $\overline{\QQ}_p$ of
$\QQ_p$. There is a norm map
$\operatorname{Nm} \colon \PP^1(K) \to \PP^n(\QQ_p)$ which sends the element
$[\beta : \alpha ] \in \PP^1(K) $ to $[a_0:\dots:a_n]$, where
\[a_i = (-1)^i \sum_{ \substack{ S \subseteq \operatorname{Mor}(K,\overline{\QQ}_p)\\
      \abs{S}=i}} \prod_{\sigma \in S} \sigma(\beta) \prod_{\sigma \in S^c} \sigma(\alpha) \] are the coefficients of
the binary $n$-ic form
$\prod_{\sigma\in\operatorname{Mor}(K,\overline{\QQ}_p)}(\sigma(\alpha)x-\sigma(\beta)y)$. (For
$\alpha=1$, these are the coefficients of the characteristic polynomial of $\beta$.)

The discriminant of a binary $n$-ic form is a homogeneous polynomial $\Disc$ in
its coefficients $a_0,\dots, a_n$. For $P=[a_0:\dots:a_n] \in \PP^n(K)$ where
$a_0,\dots,a_n\in \OO_{K}$ generate the unit ideal, we let $\abs{\Disc(P)}_p$ be
the $p$-adic absolute value of $\Disc(a_0x^n+\dots+a_ny^n)$, which does not
depend on the choice of representative.

Fix now a representative of each isomorphism class of \'{e}tale algebras of
degree $n$ over $\QQ_p$.

\begin{lemma}\label{change-of-variables} For any
  $\mu_{ \PP^n(\QQ_p)}$-integrable function $\psi$ on $\PP^n(\QQ_p)$, we have

  \[ \int_ {\PP^n(\QQ_p)} \psi d \mu_{\PP^n(\QQ_p)} = \sum_{\substack{K/\QQ_p\\ \deg n \\
        \text{\'etale}}} \frac{ \abs{\Disc(K/\QQ_p)}_p^{1/2}}{\abs{\Aut(K)}}
    \int_{\PP^1(K)} \abs{ \Disc(\operatorname{Nm}(P))}_p^{1/2} \psi(
    \operatorname{Nm}(P)) d\mu_{\PP^1(K)}(P).\]
\end{lemma}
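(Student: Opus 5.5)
We will prove the formula by a change of variables along the norm map, computing its Jacobian locally. The first step is to reduce to an affine chart. The locus in $\PP^n(\QQ_p)$ of forms with vanishing discriminant, or with $a_0=0$, has measure zero. Likewise, the locus in each $\PP^1(K)$ where $\alpha$ is a zero divisor has measure zero. So we may restrict to forms with $a_0\neq 0$, i.e.\ to $[1:a_1/a_0:\dots:a_n/a_0]$, and to points $[\beta:1]$ with $\beta\in K$.

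Even so, the affine coordinates need not lie in $\OO$. To handle this we use $\GL_{n+1}(\ZZ_p)$- and $\GL_2(\OO_K)$-invariance. More precisely, both sides transform compatibly under $\GL_2(\ZZ_p)$: this group acts on $\PP^1(K)$ preserving $\mu_{\PP^1(K)}$, and on binary forms by a linear substitution lying in $\GL_{n+1}(\ZZ_p)$, which preserves $\mu_{\PP^n(\QQ_p)}$. It also preserves $|\Disc|_p$, since $\det\gamma$ is a unit, and it commutes with $\operatorname{Nm}$. The charts $\gamma\cdot\{[\beta:1]:\beta\in\OO_K\}$ cover $\PP^1(K)$ up to measure zero, and their images under $\operatorname{Nm}$ lie in the corresponding charts of $\PP^n(\QQ_p)$. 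Hence by a partition argument it suffices to prove the identity for $\psi$ supported on $\{[1:c_1:\dots:c_n]:c_i\in\ZZ_p\}\cong\ZZ_p^n$ with nonzero discriminant. Only $\beta\in\OO_K$ with $\alpha=1$ map there: a primitive form with $a_0=1$ forces $\alpha$ to be a unit, and after scaling we may take $\alpha=1$.

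The second step identifies the fibers. In the chart just described, $\operatorname{Nm}$ becomes the characteristic polynomial map $\chi\colon\OO_K\to\ZZ_p^n$, $\beta\mapsto(\text{coefficients of }\prod_\sigma(x-\sigma(\beta)))$. A separable monic polynomial $g\in\ZZ_p[x]$ has the preimages $\beta$ with $\QQ_p[x]/g\cong K$ as follows. Over the étale algebras $K$, the elements $\beta\in K$ with characteristic polynomial $g$ are exactly the images of $x$ under the isomorphisms $\QQ_p[x]/g\to K$, so there are $|\Aut(K)|$ of them. Moreover, exactly one representative $K$ occurs. Each such $\beta$ is automatically in $\OO_K$, being integral. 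Thus, summing over $K$ with weight $1/|\Aut(K)|$, every $g$ is counted exactly once.

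The third step is the Jacobian computation, which is the main point. Over $\overline{\QQ}_p$, in the coordinates $\sigma(\beta)$, the map $\beta\mapsto\chi(\beta)$ factors through the roots-to-coefficients map. That map has Jacobian determinant $\prod_{i<j}(\sigma_i\beta-\sigma_j\beta)$, whose absolute value is $|\Disc(\chi(\beta))|_p^{1/2}$. We must then pass from $K$ with its Haar measure to coordinates $(\sigma(\beta))_\sigma$. Choosing a $\ZZ_p$-basis $e_1,\dots,e_n$ of $\OO_K$, the matrix $(\sigma(e_k))$ has determinant of absolute value $|\Disc(K/\QQ_p)|_p^{1/2}$. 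Combining the two, the $p$-adic Jacobian of $\chi\colon\OO_K\to\ZZ_p^n$ in Haar coordinates has absolute value $|\Disc(K/\QQ_p)|_p^{1/2}\,|\Disc(\chi(\beta))|_p^{1/2}$. One must check this is legitimate: the chain rule holds over the field generated, and the determinant is Galois invariant up to sign, hence in $\QQ_p$. Since $\chi$ is a local analytic isomorphism wherever the discriminant is nonzero, the $p$-adic change of variables formula (the analytic Jacobian formula for finite-to-one maps) gives
\[\int_{\ZZ_p^n}\psi\,d\mu=\sum_K\frac{1}{|\Aut(K)|}\int_{\OO_K}\psi(\chi(\beta))\,|\operatorname{Jac}\chi(\beta)|_p\,d\mu_{\OO_K}(\beta).\]
This is the claimed formula on this chart.

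The main obstacle is getting the Jacobian normalization exactly right. This means tracking the two discriminant factors, and making sure the reduction to charts does not introduce extra factors from the measures on $\PP^m$. The latter is handled by the stated compatibility of $\mu_{\PP^m}$ with the affine embedding $\OO^m\hookrightarrow\PP^m$.
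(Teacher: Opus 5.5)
Your fiber count and Jacobian computation are correct and agree with the paper's argument (Serre's). The gap is in the reduction to the chart $\{[1:c_1:\dots:c_n]:c_i\in\ZZ_p\}$.

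You claim that the translates $\gamma\cdot\{[\beta:1]:\beta\in\OO_K\}$, $\gamma\in\GL_2(\ZZ_p)$, cover $\PP^1(K)$ up to a null set. Equivalently, almost every primitive form would have a $\GL_2(\ZZ_p)$-translate with unit leading coefficient. This is false as soon as $n\geq p+1$. For $\gamma=\begin{psmallmatrix} a & b \\ c & d \end{psmallmatrix}\in\GL_2(\ZZ_p)$, the leading coefficient of $f^\gamma$ is $f(a,c)$, and $(a,c)\not\equiv(0,0)\pmod p$. So if $f \bmod p$ vanishes on all of $\PP^1(\mathbb{F}_p)$ (i.e.\ is divisible by $x^py-xy^p$), no translate of $f$ lies in your chart. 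Such forms fill whole residue classes mod $p$, so they have positive measure.

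For example, take $p=2$ and $n=3$: all $f\equiv xy(x+y)\pmod 2$ are excluded. Dually, for $K=\QQ_2^3$, the open set of points of $\PP^1(\QQ_2)^3$ whose three coordinates reduce to the three distinct points of $\PP^1(\mathbb{F}_2)$ lies in none of your charts. As written, your argument therefore proves the lemma only for $p\geq n$, whereas it is stated, and later used, for every $p$.

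The paper avoids this by doing the fiber count globally and computing the Jacobian \emph{pointwise}. The absolute value of the Jacobian determinant with respect to the integral lattices is unchanged under base change to an unramified extension $F/\QQ_p$. Once the residue field of $F$ has more than $n$ elements, $\GL_2(\OO_F)$ can move any primitive form to one with unit leading coefficient, which reduces to the monic computation.

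Alternatively, you can stay in your framework and simply drop integrality, using the single chart $\beta\in K$ (whose complement is null). With respect to Haar measure:
\begin{itemize}
\item $\mu_{\PP^1(K)}$ has density $\prod_i\max(1,\|\beta_i\|)^{-2}$, where $\|\cdot\|$ is the normalized absolute value on the factor $K_i$ of $K$;
\item $\mu_{\PP^n(\QQ_p)}$ has density $M^{-(n+1)}$, where $M=\max(1,|c_1|_p,\dots,|c_n|_p)$.
\end{itemize}
For the primitive representative, $|\Disc(\Nm P)|_p^{1/2}=M^{-(n-1)}|\Disc(\chi(\beta))|_p^{1/2}$. Gauss's lemma gives $M=\prod_\sigma\max(1,|\sigma(\beta)|_p)=\prod_i\max(1,\|\beta_i\|)$. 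Hence the extra factors combine to exactly $M^{-(n+1)}$, and your Jacobian formula on all of $K$ yields the lemma for every $p$.
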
 

\begin{proof}
  Every $[f]\in\PP^n(\QQ_p)$ with non-vanishing discriminant lies in the image of
  the norm map $\Nm\colon\PP^1(K)\to\PP^n(\QQ_p)$ of exactly one étale
  $\QQ_p$-algebra $K$ (up to isomorphism), namely $K_f$. Since the set of
  $[f]\in\PP^n(\QQ_p)$ with vanishing discriminant has measure zero, it suffices
  to check that for every $\QQ_p$-algebra $K$, the integral over the image of
  the corresponding norm map is given by

\begin{equation}
\label{eq:change-of-variables}
\int_{\Nm(\PP^1(K))} \psi d\mu_{\PP^n(\QQ_p)}
=
\frac{ \abs{\Disc(K/\QQ_p)}_p^{1/2}}{\abs{\Aut(K)}}  \int_{\PP^1(K)} \abs{ \Disc(\operatorname{Nm}(P))}_p^{1/2} \psi( \operatorname{Nm}(P)) d\mu_{\PP^1(K)}(P).   
\end{equation}

The determinant of a map between two vector spaces over $\QQ_p$ of the same
dimension is well-defined if we choose a basis for these vector spaces. The
$p$-adic absolute value of the determinant depends only on the
$\mathbf Z_p$-lattice generated by the basis, and so is well-defined if we
choose a $\mathbf Z_p$-lattice in each vector space .The determinant of a map
between two vector spaces over $\QQ_p$ of the same dimension is well-defined if
we choose a basis for these vector spaces. The $p$-adic absolute value of the
determinant depends only on the $\mathbf Z_p$-lattice generated by the basis,
and so is well-defined if we choose a $\mathbf Z_p$-lattice in each vector
space. \Cref{eq:change-of-variables} follows from the general change of
variables formula for $p$-adic integration once we prove the following claim.

\textbf{Claim:} The norm map is $\abs{\Aut(K)}$-to-1 onto its image outside a
set of measure $0$ and the determinant of the Jacobian of the norm map, defined
using the $\mathbf Z_p$-lattice of the tangent space of projective space
$\PP^1(K)$ arising from the tangent space of $\PP^1_{\OO_K}$ and the
$\mathbf Z_p$-lattice of the tangent space of projective space $\PP^n(\QQ_p)$
arising from the tangent space of $\PP^n_{ \mathbf Z_p}$, is
$\abs{\Disc(K/\QQ_p)}_p^{1/2}\abs{ \Disc(\operatorname{Nm}(P))}_p^{1/2}$.

Let us first see that the map is $\lvert\Aut(K)\rvert$-to-$1$. There is a
natural action of $\Aut(K)$ on $\PP^1(K)$. To show the map is
$\lvert\Aut(K)\rvert$-to-$1$, it suffices to show the inverse image of any
point consists of a single simply transitive $\lvert\Aut(K)\rvert$-orbit. An
orbit only fails to be simply transitive if an element in the orbit is fixed by
some automorphism, which happens if and only if it is contained in $\PP^1(K')$
for some proper subfield $K'$ of $K$. This forms a lower-dimensional subvariety
and hence has measure $0$. Outside the vanishing locus of the discriminant, we
can recover the $\QQ_p$-algebra $K$ and the point of $\PP^1(K)$ by taking the
ring $K_f$ and element $[1:\theta]$ of $\PP^1_{K_f}$, up to isomorphism, and thus
the inverse image indeed forms a single orbit.

To check the determinant statement, we observe that this determinant is
preserved after passing from $\QQ_p$ to any unramified extension $F$ of
$\QQ_p$. Furthermore the determinant, and the claimed formula for it, are both
invariant under the action of $\GL_2(\OO_K)$. Using this action, we may
transform $f$ into a form with invertible $x^n$-coefficient and thereby reduce
to the monic case.

The statement in the monic case is that for $F$ a $p$-adic field and $K$ a rank
$n$ étale $F$-algebra, the $p$-adic absolute value of the determinant of the
Jacobian of the map $\alpha \mapsto \operatorname{Nm}_K^F(x-\alpha)$ from
$\OO_K$ to the space of degree $n$ monic polynomials over $\OO_F$, defined
using the $\OO_F$-lattices $\OO_K$ and $\OO_F^n$,
is
\[\abs{\Disc(K/F)}_p^{1/2}\abs{ \Disc(\operatorname{Nm}_K^F(x-\alpha ))}_p^{1/2}.\]
This was essentially proven by Serre \cite[Lemme~2]{Serre78}, but we repeat the
argument here.

We first reduce to the case $K =F^n$. To do this, we base change from $F$ to
the Galois closure $L$ of $K$. observe that if we base change everything from
$F$ to $L$, the map $\alpha \to \operatorname{Nm}_K^F(x-\alpha)$ and its derivative are
preserved, the base change of the $\OO_F$-lattice $\OO_F^n$ is the lattices
$\OO_L^n$, but the base change of the lattice $\OO_K$ is
$\OO_K \otimes_{\OO_F} \OO_L$ which is not $\OO_L^n$. The change of basis matrix to
convert from a basis to $\OO_K$ over $\OO_F$ to the standard basis of $\OO_L^n$
over $\OO_L$ has the property that multiplying by its transpose produces the
matrix of traces whose determinant is $\Disc(K/F)$, and hence its determinant
must be a square root of $\Disc(K/F)$, thus changing from one lattice to
another cancels the $\abs{\Disc(K/F)}_p^{1/2}$ factor and we may reduce to the
case where $K =F^n$.

In this case, the norm map sends a tuple $\alpha_1,\dots, \alpha_n$ to the polynomial
$(x-\alpha_1)\dots (x-\alpha_n)$. The Jacobian is the $n\times n$ matrix whose
$ij$-th entry is $(-1)^{j-1}$ times the $j-1$st elementary symmetric polynomial
of the list $\alpha_1,\dots,\alpha_n$ with $\alpha_i$ removed. The determinant of this matrix
is classical, and known to be
\[\pm \prod_{1\leq i<j \leq n} (\alpha_i-\alpha_j), \] which is the square root of
$\Disc(\operatorname{Nm}(x-\alpha ))$, giving the monic case with $K=F^n$ and
completing the verification of the claim.
\end{proof}

\begin{lemma}\label{p-adic-measure} The measure of the set of
  $[f] \in \PP^n(\QQ_p)$ with $\abs{\Disc(f)}_p \leq p^{-k}$ is
  $ O_n( p^{-k/2})$
  with the implicit constant depending
  only on $n$.
\end{lemma}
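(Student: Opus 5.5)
We apply \Cref{change-of-variables} with $\psi$ the indicator function of $\{[f]\in\PP^n(\QQ_p) : \abs{\Disc(f)}_p\leq p^{-k}\}$. Since there are only $O_n(1)$ isomorphism classes of étale algebras of degree $n$ over $\QQ_p$ (uniformly in $p$, by Krasner's lemma/Serre's mass formula; for $p>n$ only tamely ramified ones occur, and their number is bounded in terms of $n$), it suffices to bound each summand by $O_n(p^{-k/2})$. For a fixed $K$, the integrand is $\abs{\Disc(\Nm(P))}_p^{1/2}\psi(\Nm(P))$. On the support of $\psi\circ\Nm$ we have $\abs{\Disc(\Nm(P))}_p\leq p^{-k}$, so the integrand is at most $p^{-k/2}$. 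Also $\abs{\Disc(K/\QQ_p)}_p^{1/2}\leq 1$ and $\abs{\Aut(K)}\geq1$. The summand for $K$ is therefore at most $p^{-k/2}\,\mu_{\PP^1(K)}(\PP^1(K))$.

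It remains to bound the total measure of $\PP^1(K)$ uniformly. Write $K=\prod_i K_i$ with $K_i/\QQ_p$ fields. Then $\PP^1(K)=\prod_i\PP^1(K_i)$. A point $[\beta:\alpha]$ of $\PP^1(K_i)$ has a representative with either $\alpha=1,\beta\in\OO_{K_i}$, or $\beta=1,\alpha\in\mathfrak m_{K_i}$. So $\PP^1(K_i)$ is covered by two charts, each of measure at most $1$ under $\mu_{\PP^1(K_i)}$. This matches the normalization of $\mu_{\PP^m}$, under which the standard affine chart $\OO_K\hookrightarrow\PP^1(K)$ has measure $1$, and which is $\GL_2(\OO_K)$-invariant, so the swapped chart also has measure at most $1$. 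Hence $\mu_{\PP^1(K)}(\PP^1(K))\leq 2^{r}\leq 2^n$, where $r\leq n$ is the number of factors. One needs to check that $\mu_{\PP^1(K)}$ is the product of the measures $\mu_{\PP^1(K_i)}$. This holds because $\OO_K=\prod\OO_{K_i}$, the Haar measures multiply, and the unit-ideal condition and the quotient by $\OO_K^\times$ both factor componentwise.

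Combining, the measure in question is at most $\#\{K\}\cdot 2^n\cdot p^{-k/2}=O_n(p^{-k/2})$.

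The main point requiring care is the uniformity in $p$ of the number of étale algebras, since that count grows with $p$ for $p\leq n$ but is bounded for each such small $p$. Only finitely many primes $p\leq n$ occur, and there are finitely many degree-$n$ extensions of each $\QQ_p$, so taking the maximum over these gives a constant depending only on $n$. The measure-zero locus $\Disc=0$ is irrelevant, as \Cref{change-of-variables} already disregards it.
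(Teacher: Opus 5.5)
Your proposal is correct and follows the paper's proof. You apply \Cref{change-of-variables} to the indicator function, bound the integrand by $p^{-k/2}$, and use $\abs{\Disc(K/\QQ_p)}_p^{1/2}\le 1$, $\abs{\Aut(K)}\ge 1$, a uniform bound on the total mass of $\PP^1(K)$, and the $O_n(1)$ count of degree-$n$ étale algebras. The only cosmetic difference is how you bound the mass: you get $\mu_{\PP^1(K)}(\PP^1(K))\le 2^n$ from a two-chart cover of each field factor, while the paper reads off $\mu_{\PP^1(K)}(\PP^1(K))\le \mu_{\OO_K}(\OO_K^\times)^{-1}\le 2^n$ directly from the definition of the measure.
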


\begin{proof} We apply Lemma \ref{change-of-variables} to the function $\psi$
  which is $1$ if $\abs{\Disc(f)}_p \leq p^{-k}$ and $0$ otherwise. For this
  $\psi$, we have
  $\abs{ \Disc(\operatorname{Nm}(P))}_p^{1/2} \psi( \operatorname{Nm}(P)) \leq
  p^{-k/2}$ for all $P$, and thus Lemma \ref{change-of-variables} gives
\[ \int_ {\PP^n(\QQ_p)} \psi d \mu_{\PP^n(\QQ_p)} = \sum_{\substack{K/\QQ_p\\ \deg n \\ \text{\'etale}}}\frac{ \abs{\Disc(K/\QQ_p)}_p^{1/2}}{\abs{\Aut(K)}}  \int_{\PP^1(K)} \abs{ \Disc(\operatorname{Nm}(P))}_p^{1/2} \psi( \operatorname{Nm}(P)) d\mu_{\PP^1(K)}(P)\]
\[ \leq \sum_{\substack{K/\QQ_p\\ \deg n \\ \text{\'etale}}}\frac{ \abs{\Disc(K/\QQ_p)}_p^{1/2}}{\abs{\Aut(K)}}  \int_{\PP^1(K)} p^{-k/2} d\mu_{\PP^1(K)} \leq \sum_{\substack{K/\QQ_p\\ \deg n \\ \text{\'etale}}}\frac{ \abs{\Disc(K/\QQ_p)}_p^{1/2}}{\abs{\Aut(K)}} \cdot p^{-k/2} \cdot \mu_{\OO_K}(\OO_K^\times)^{-1} \]
since the $p$-adic measure of $\PP^1(K)$ is bounded by $\mu_{\OO_K^{m+1}}(\OO_K^{m+1})/\mu_{\OO_K}(\OO_K^\times) = \mu(\OO_K^\times)^{-1}$.

Now $\abs{\Disc(K/\QQ_p)}_p^{1/2}\leq 1$,  $\abs{\Aut(K)}\geq 1$,
$\mu(\OO_K^\times)\geq 2^{-n}$,
and the number of \'{e}tale algebras of degree $n$ over $\QQ_p$ is bounded only in terms of $n$, so
$$\sum_{K/\QQ_p}\frac{ \abs{\Disc(K/\QQ_p)}_p^{1/2}}{\abs{\Aut(K)}\mu(\OO_K^\times)} = O_n(1)$$
and hence the whole expression is $O_n(p^{-k/2})$.  \end{proof}

\begin{lemma}\label{lem:density-of-divisible}
  The fraction of residue classes $f \in V_n (\mathbf Z/p^k\mathbf Z)$ with
  $p^k \mid \Disc(f)$ is $O(p^{-k/2})$ where the implicit constant depends only on
  $n$.
\end{lemma}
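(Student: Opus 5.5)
The plan is to reduce the statement to Lemma~\ref{p-adic-measure} by comparing the counting measure on residue classes with the Haar measure on $\PP^n(\QQ_p)$. First I pass from $V_n(\ZZ/p^k\ZZ)$ to $\ZZ_p^{n+1}$ with its Haar probability measure. Since $\Disc$ has integer coefficients, the condition $p^k\mid\Disc(f)$ depends only on $f \bmod p^k$. Hence the fraction in question equals
\[
\mu_{\ZZ_p^{n+1}}\bigl(\{f\in\ZZ_p^{n+1} : \abs{\Disc(f)}_p\leq p^{-k}\}\bigr).
\]

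Next I decompose by content. Write $f=p^j f'$ with $f'$ primitive, meaning some coefficient is a unit. Since $\Disc$ is homogeneous of degree $2n-2$, we have $\abs{\Disc(f)}_p = p^{-j(2n-2)}\abs{\Disc(f')}_p$. The set of $f$ with content exactly $p^j$ is $p^j$ times the set of primitive vectors, so its measure is scaled by $p^{-j(n+1)}$.

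On primitive vectors, the pushforward of Haar measure to $\PP^n(\QQ_p)$ is, by the definition of $\mu_{\PP^n(\QQ_p)}$, exactly $\mu_{\ZZ_p}(\ZZ_p^\times)\,\mu_{\PP^n(\QQ_p)}$, where $\mu_{\ZZ_p}(\ZZ_p^\times)=1-1/p$. The discriminant condition is invariant under multiplication by units, so Lemma~\ref{p-adic-measure} shows that the primitive $f'$ with $\abs{\Disc(f')}_p\leq p^{-m}$ have measure $O_n(p^{-m/2})$ for every $m\geq 0$.

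Summing over $j\geq0$, with $m=\max(0,\,k-j(2n-2))$, the total measure is
\[
\ll_n \sum_{j\geq 0} p^{-j(n+1)}\, p^{-\max(0,\,k-j(2n-2))/2}.
\]
I split this sum at $j_0=k/(2n-2)$.

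For $j\leq j_0$, each term is $p^{-k/2}\,p^{-j(n+1)+j(n-1)} = p^{-k/2}\,p^{-2j}$. This is a geometric series, so these terms contribute $O(p^{-k/2})$.

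For $j> j_0$, the terms are bounded by $p^{-j(n+1)}$, and their sum is
\[
\ll p^{-j_0(n+1)} = p^{-k(n+1)/(2n-2)} \leq p^{-k/2},
\]
since $(n+1)/(2n-2)\geq 1/2$.

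Combining the two ranges gives $O_n(p^{-k/2})$, with the constant depending only on $n$, as claimed.

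There is no real obstacle here. The only point needing care is the normalization between $\mu_{\ZZ_p^{n+1}}$ restricted to primitive vectors and $\mu_{\PP^n(\QQ_p)}$, and this comparison is exactly the definition of the latter measure given above.
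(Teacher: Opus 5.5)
Your proposal is correct and follows essentially the same route as the paper: pass to Haar measure on $\ZZ_p^{n+1}$, decompose by content $p^j$, use homogeneity of $\Disc$ together with Lemma~\ref{p-adic-measure} on the primitive part, and sum a geometric series. The only cosmetic difference is that the paper does not split the sum at $j_0$; it notes that every term is uniformly $O(p^{-2j-k/2})$, since $p^{-j(n+1)}\le p^{-2j-k/2}$ whenever $j(n-1)\ge k/2$.
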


\begin{proof}
  The desired fraction of residue classes is the measure of the set
\[
\Omega_k \coloneq \{f=(a_0,\dots,a_n)\in\ZZ_p^{n+1} : p^k \mid \Disc(f)\}.
\]
We partition $\ZZ_p^{n+1}$ according to the greatest common divisor of the
coordinates as $\ZZ_p^{n+1} = \{0\}\sqcup\bigsqcup_{\ell\geq0} p^\ell T$, where
\[
T \coloneq \{(a_0,\dots,a_n)\in\ZZ_p^{n+1} \mid a_0,\dots,a_n\textnormal{ generate the unit ideal of }\ZZ_p\}.
\]
Since $\Disc(p^\ell f) = p^{(2n-2)\ell}\Disc(f)$, we have
\[\Omega_k\cap p^\ell T = p^\ell\cdot\left(\Omega_{\max(0,k-(2n-2)\ell)}\cap T\right)\] for any
$k,\ell\geq0$. By \Cref{p-adic-measure} and the definition of our measure on
$\PP^n(\ZZ_p)$, the set $\Omega_k\cap T$ has measure $O(p^{-k/2})$ for any
$k\geq0$. Hence,
\begin{align*}
    \mu(\Omega_k)
    &= \sum_{\ell\geq0} \mu_{\ZZ_p^{n+1}}\left(\Omega_k\cap p^\ell T\right)
    = \sum_{\ell\geq0} p^{-(n+1)\ell}\cdot \mu_{\ZZ_p^{n+1}}\left(\Omega_{\max(0,k-(2n-2)\ell)}\cap T\right) \\
    &= \sum_{\ell\geq0} p^{-(n+1)\ell}\cdot O\left(p^{\min(0,(n-1)\ell-k/2)}\right)
    = \sum_{\ell\geq0} O(p^{-2\ell - k/2})
    = O(p^{-k/2}).
\qedhere
\end{align*}
\end{proof}

\subsection{Forms whose discriminant is divisible by a large square}\label{ss-large-square}

\begin{lemma}
\label{lem:usually-almost-squarefree}
We have $N(C(s),X) \leq \gamma(s) X^{n+1} + o_s(X^{n+1})$ for $X\to\infty$ for some
$\gamma(s)$ with $\lim_{s\to\infty}\gamma(s)=0$.
\end{lemma}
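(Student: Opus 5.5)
We split according to whether the square divisor of $\Disc(f)$ involves a large prime or only small primes. Fix an auxiliary parameter $Y\ge 2$, to be chosen later as a slowly growing function of $s$. Suppose $f\in C(s)$, so $a^2\mid\Disc(f)$ for some $a>s$. Then one of the following holds:
\begin{enumerate}[(1)]
\item $p^2\mid\Disc(f)$ for some prime $p>Y$;
\item $a$ is $Y$-smooth.
\end{enumerate}
In case (2), $a$ has at most $\pi(Y)$ distinct prime factors. So some prime power $p^k\,\|\,a$ with $p\le Y$ satisfies $p^k\ge s^{1/\pi(Y)}$, and then $p^{2k}\mid\Disc(f)$. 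Thus $C(s)$ is contained in the union of $C_1(Y)=\{f: p^2\mid\Disc(f)\text{ for some } p>Y\}$ and $C_2(s,Y)=\bigcup_{p\le Y}\bigcup_{k:\,p^k\ge s^{1/\pi(Y)}}\{f : p^{2k}\mid\Disc(f)\}$.

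\textbf{The set $C_2(s,Y)$.} This is a finite union of sets defined by congruence conditions modulo $p^{2k}$; the union is finite because we may stop at the least admissible $k$ for each $p$. For a fixed modulus $p^{2k}$, counting lattice points in the box $\{\Ht\le X\}$ gives
\[
N(\{f: p^{2k}\mid\Disc(f)\},X)=\rho_{p^{2k}}\,(2X)^{n+1}+O_{p,k}(X^n),
\]
where $\rho_{p^{2k}}$ is the proportion of residue classes in $V_n(\ZZ/p^{2k}\ZZ)$ with $p^{2k}\mid\Disc$. By \Cref{lem:density-of-divisible}, $\rho_{p^{2k}}=O_n(p^{-k})$. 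For each $p\le Y$ we only need the least $k$ with $p^k\ge s^{1/\pi(Y)}$, since divisibility by a higher power implies divisibility by this one. Summing over $p\le Y$ gives
\[
N(C_2(s,Y),X)\le O_n\bigl(\pi(Y)\,s^{-1/\pi(Y)}\bigr)X^{n+1}+o_{s,Y}(X^{n+1}).
\]

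\textbf{The set $C_1(Y)$.} This is the main obstacle, since the individual congruence densities $O(p^{-1})$ from \Cref{lem:density-of-divisible} are not summable over all primes. Here I would invoke the uniformity estimate of Bhargava--Shankar--Wang \cite{BhargavaShankarWang2-2022} for discriminants of integral binary $n$-ic forms. I expect it to give a bound of the form
\[
N(C_1(Y),X)=O_n\!\left(\frac{X^{n+1}}{Y^{c}}\right)+o(X^{n+1})
\]
for some $c>0$. It comes from splitting the divisibility $p^2\mid\Disc(f)$ into the case where $p^2$ divides it ``mod $p$'' (where $f$ has a repeated factor mod $p$ of a specific type, handled by a geometric sieve) and the case where it does so only ``mod $p^2$'' (handled by their embedding/lattice-point argument). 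In the small-to-moderate range $Y<p\le X^{\delta}$ one could alternatively combine \Cref{lem:density-of-divisible} with the Ekedahl-type geometric sieve, but the large primes genuinely require their input.

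\textbf{Conclusion.} Combining the two bounds gives
\[
N(C(s),X)\le C_n\Bigl(Y^{-c}+\pi(Y)\,s^{-1/\pi(Y)}\Bigr)X^{n+1}+o_{s}(X^{n+1}).
\]
Finally choose $Y=Y(s)\to\infty$ slowly enough that $\pi(Y)\,s^{-1/\pi(Y)}\to 0$; for instance $\pi(Y)\approx\sqrt{\log s}$ works, since then $s^{-1/\pi(Y)}=e^{-\sqrt{\log s}}$. Setting $\gamma(s)=C_n\bigl(Y(s)^{-c}+\pi(Y(s))\,s^{-1/\pi(Y(s))}\bigr)$ gives $\gamma(s)\to0$ as $s\to\infty$, as required.
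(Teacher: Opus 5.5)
Your decomposition into $C_1(Y)$ and $C_2(s,Y)$ is valid. The smooth part is handled correctly, and differently from the paper. Since a $Y$-smooth $a>s$ has some $p^k\,\|\,a$ with $p^k>s^{1/\pi(Y)}$, \Cref{lem:density-of-divisible} alone bounds this part by $O(\pi(Y)s^{-1/\pi(Y)})X^{n+1}+O_{s}(X^n)$.

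The gap is the set $C_1(Y)$. All the real difficulty now sits there, and you only \emph{expect} the needed bound to follow from \cite{BhargavaShankarWang2-2022}. Your threshold $Y=Y(s)$ does not grow with $X$. The paper, by contrast, uses \cite[Theorem~5]{BhargavaShankarWang2-2022} only for primes $p>M=X^{1/(22n^5)}$, a power of $X$, to get $E_1=o(X^{n+1})$. Nothing in your proposal controls the primes $Y<p\le X^{1/(22n^5)}$. The fallback you suggest for that range does not work:
\begin{itemize}
\item \Cref{lem:density-of-divisible} with modulus $p^2$ only gives density $O(p^{-1})$ for $p^2\mid\Disc(f)$, and $\sum_{Y<p\le X^{\delta}}p^{-1}$ is unbounded as $X\to\infty$.
\item The Ekedahl sieve removes only the codimension-two locus mod $p$ (a triple root or two double roots). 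It does not touch the forms whose reduction has a single double root but whose discriminant is still divisible by $p^2$, and that is exactly where the $p^{-1}$ loss comes from.
\end{itemize}

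The missing ingredient is the sharp local bound $\omega(p)=O(p^{-2})$ for the density of $p^2\mid\Disc(f)$. The paper takes this from \cite[Proposition~A.1]{BhargavaShankarWang2-2022}. For $p^2\le X$, each residue class modulo $p^2$ contains $\ll (X/p^2)^{n+1}$ forms of height at most $X$. Hence $\#\{f:\Ht(f)\le X,\ p^2\mid\Disc(f)\}\ll\omega(p)X^{n+1}$ uniformly in such $p$. Summing over $Y<p\le X^{1/(22n^5)}$, and using \cite[Theorem~5]{BhargavaShankarWang2-2022} only for the larger primes, gives
\[
N(C_1(Y),X)\ll X^{n+1}\sum_{p>Y}p^{-2}+o(X^{n+1})\ll \frac{X^{n+1}}{Y}+o(X^{n+1}).
\]
This is your expected bound with $c=1$, and with it your argument is complete.

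After this repair you use the same external inputs as the paper. The only real difference is in the smooth/composite part. The paper sums the multiplicative densities $\omega(a)$ over all $s<a\le M^2$, which needs $\omega(p^k)\ll p^{-\max(2,k)}$ and so Proposition~A.1 there too. Your pigeonhole on prime powers needs only \Cref{lem:density-of-divisible}, at the price of a weaker but still sufficient rate $\gamma(s)$.
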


If we only took into account squarefree integers $a$ in the definition of
$C(s)$, this lemma would follow almost immediately from
\cite[Theorem~5]{BhargavaShankarWang2-2022}.

\begin{proof}[Proof of \Cref{lem:usually-almost-squarefree}]
  Let $M\geq s$. If $\Disc(f)$ is not divisible by $p^2$ for any prime number
  $p>M$, but is divisible by $a^2$ for some integer $a>M$, then it is divisible
  by $a^2$ for some integer $M<a\leq M^2$. (Let $a>M$ be minimal with
  $a^2\mid\Disc(f)$, and assume $a>M^2$. By assumption, the integer $a$ is
  composite. Let $p$ be its smallest prime factor. Then,
  $\frac ap\geq\sqrt a > M$ and $(\frac ap)^2\mid\Disc(f)$, contradicting the
  minimality of $a$.) Hence,
\[
  N(C(s),X) \leq E_1 + E_2
\]
with
\[
  E_1 \coloneq \#\{f\in V_n(\ZZ):\Ht(f)\leq X\textnormal{ and }p^2\mid\Disc(f)\textnormal{ for
    some prime number }p>M\}
\]
and
\[
  E_2 \coloneq \#\{f\in V_n(\ZZ):\Ht(f)\leq X\textnormal{ and }a^2\mid\Disc(f)\textnormal{ for
    some integer }s<a\leq M^2\}.
\]
Let $M = X^{1/(22n^5)}$ and assume that $X$ is large enough so that $M\geq s$ as
above. By \cite[Theorem~5]{BhargavaShankarWang2-2022}, we then have
\[
  E_1 = o(X^{n+1}) \qquad\textnormal{for }X\rightarrow\infty.
\]
For all integers $a\leq M^2$ and all $X\geq1$, we have $a^2\leq M^4\leq X$ and therefore
\[
  \#\{f\in V_n(\ZZ) : \Ht(f)\leq X\textnormal{ and }a^2\mid\Disc(f)\} \ll \omega(a) X^{n+1},
\]
where $\omega(a)$ is the fraction of residue classes $f\in V_n(\ZZ/a^2\ZZ)$ with
$a^2\mid\Disc(f)$. Thus,
\[
E_2 \ll \sum_{a>s} \omega(a) X^{n+1}.
\]
By \Cref{lem:density-of-divisible}, we have $\omega(p^k) = O(p^{-k})$. By
\cite[Proposition~A.1]{BhargavaShankarWang2-2022}, we have
$\omega(p^k)= O(p^{-2})$ for all $k\geq1$. Thus, the Euler product
\[
  \sum_{a\geq1}\omega(a) = \prod_p \sum_{k\geq0} \omega(p^k) = \prod_p \left(1 + \sum_{k\geq1}
    O(p^{-\max(2,k)})\right) = \prod_p (1 + O(p^{-2}))
\]
converges, so $\sum_{a>s}\omega(a)$ goes to $0$ as $s$ goes to infinity.
\end{proof}

\section{The \texorpdfstring{$S_n$}{Sn}-closure}\label{sec:basis-of-closure}

In this section, we construct and prove some elementary properties of the
$S_n$-closure of the rings $R_f$, when $f\in V_n(\ZZ)$ is an element whose
coefficients have greatest common divisor 1.

\subsection{The work of Bhargava--Satriano}

Let $A/B$ be a rank $n$ extension of rings, i.e., $A$ is a $B$-algebra which is
free of rank $n$ as a $B$-module. Then Bhargava--Satriano
\cite{BhargavaSatriano-2014} define the \cdef{$S_n$-closure of $A$ over $B$} to
be the quotient ring $\SnC{A}{B}=A^{\otimes n}/I(A/B)$, where $I(A/B)$ is the ideal
generated by all elements of the form
\begin{equation*}
s_j(a)-\sum_{1\leq i_1<\cdots<i_j\leq n}a^{(i_1)}\cdots a^{(i_j)}
\end{equation*}
for all $a\in A$ and $j\in\{1,\ldots,n\}$. Above, $s_j(a)$ denotes $(-1)^j$ times the
$(n-j)$-th coefficient of the characteristic polynomial of the $B$-module
homomorphism $A\to A$ given by $x\mapsto ax$, and $a^{(i)}$ denotes the tensor
$1\otimes\cdots\otimes a\otimes\cdots\otimes 1$ in $A^{\otimes n}$, with $a$ in the $i$-th position and ones elsewhere.

A number of properties of the $S_n$-closure are proven in
\cite{BhargavaSatriano-2014}, including many we will need. In this subsection,
we recall their construction in \cite[\S6]{BhargavaSatriano-2014} of the
$S_n$-closure of monogenic rings. Let
\[
f(x) = x^n + f_1x^{n-1} + \dots + f_n
\]
be a monic degree $n$ polynomial with coefficients in $B$. Then $R_f=B[x]/f(x)$
is a \cdef{monogenic ring}. The $S_n$-closure of $R_f$ is constructed as
follows: consider the ring $B[X_1,\ldots,X_n]$, and define the element
$\Sigma_k $, for $k\in\{1,\ldots,n\}$, to be the $k$-th elementary symmetric polynomial in
the $X_i$. Then the $S_n$-closure of $R_f$ is
\begin{equation*}
\SnC{R_f}{B} = B[X_1,\ldots,X_n]/(\Sigma_1 +f_1,\Sigma_2 -f_2,\ldots,\Sigma_n -(-1)^nf_n),
\end{equation*}
the quotient of $B[X_1,\dots,X_n]$ by the ideal generated by the relations
$\Sigma_k -(-1)^k f_k$ for $k$ from $1$ to~$n$. They next observed that this
expresses the $S_n$-closure as the tensor product of $B[X_1,\dots,X_n]$ over
$B[\Sigma_1 ,\dots, \Sigma_n ]$ with $B$ (using the $B$-algebra homomorphism
$B[\Sigma_1,\dots,\Sigma_n]\to B$ sending $\Sigma_i$ to $(-1)^i f_i$), so a
$B$-basis of the $S_n$-closure may be provided by a basis for
$B[X_1,\dots,X_n]$ as a module over $B[\Sigma_1 ,\dots, \Sigma_n ]$. They found an
explicit basis of monomials of the form $\prod_{i=1}^n X_i^{e_i}$ where the
$e_i$ satisfy $0\leq e_i <i$ to produce an explicit basis of the $S_n$-closure.

Bhargava--Satriano did not consider the successive minima of the $S_n$-closure
in the case $B=\ZZ$, but this basis could be used to give an upper bound for
the successive minima by calculating the length of each basis vector. Combined
with the discriminant formula \cite[(16)]{BhargavaSatriano-2014}, this gives
lower bounds for the successive minima as well.

\subsection{The $S_n$-closure of binary rings} Our goal is to replicate the
construction of Bhargava--Satriano in the previous subsection for the ring
$R_f$ associated to any primitive $n$-ic form $f\in V_n(\ZZ)$. We first define
analogues of the ring $B[X_1,\dots,X_n]$, the subring
$B[\Sigma_1 ,\dots, \Sigma_n ]$, and a basis for the ring as a module over the subring.
For any ring $R$, let $B_n(R)$ be the subring of
$R[\alpha_1,\dots,\alpha_n, \beta_1,\dots,\beta_n]$ consisting of linear combinations of
monomials such that, for each $i,j$, the exponent of $\alpha_i$ plus the exponent of
$\beta_i$ equals the exponent of $\alpha_j$ plus the exponent of $\beta_j$. Then
$B_n(R)$ is a graded ring over $R$, where we define the degree of a monomial to
be the exponent of $\alpha_i$ plus the exponent of $\beta_i$ (for any $i$). For
$k$ from $0$ to $n$, consider the homogeneous degree $1$ element
\begin{equation*}
  a_k = (-1)^k \sum_{ \substack {S \subseteq \{1,\dots,n\} \\ |S|=k}} \prod_{i \not\in S} \alpha_i \prod_{i\in S} \beta_i
\end{equation*}
of $B_n(R)$. Then $a_k$ is the $k$-th coefficient of the binary $n$-ic form
\begin{equation*}
  f(x,y)=\prod_{i=1}^n(\alpha_i x-\beta_i y).
\end{equation*}
The ring $B_n\colonequals B_n(\ZZ)$ is the analogue of $\ZZ[X_1,\ldots,X_n]$ and the
elements $a_k\in B_n$ are the analogues of the elements $(-1)^k\Sigma_k$. We will
prove that $B_n$ is free of rank $n!$ over $\ZZ[a_0,\ldots,a_n]$, with the following
set $T_n$ of monomials forming a basis:

\begin{definition}
  For $d\in\{0,\dots,n-1\}$, let $T_{n}^{d}$ be the set of monomials in $B_n$ of
  degree $d$ such that for all $0\leq k \leq d-1$, there exists $i<j$ where the
  exponent of $\beta_i$ is $k$ and the exponent of $\beta_j$ is $k+1$. Let
  $T_n \colonequals \bigcup_{d=0}^{n-1} T_{n}^{d}$.
\end{definition}

We first check that $T_n$ has size $n!$ using the following bijection:

\begin{lemma}
  The $($degree $d)$ monomials in $T_{n}^{d}$ are in bijection with the
  permutations in $S_n$ with exactly $d$ ascents (i.e., exactly $d$ indices $i$
  such that $\sigma(i) <\sigma(i+1)$).
\end{lemma}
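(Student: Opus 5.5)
The plan is to encode a monomial in $T_n^d$ by its vector of $\beta$-exponents and then match that vector to a permutation through the decomposition of a word into maximal decreasing runs.

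\textbf{Encoding monomials.} A monomial in $B_n$ of degree $d$ has the form $\prod_{i=1}^n \alpha_i^{d-e_i}\beta_i^{e_i}$ with $0\le e_i\le d$. It is therefore determined by the tuple $e=(e_1,\dots,e_n)\in\{0,\dots,d\}^n$. The defining condition of $T_n^d$ becomes: for every $0\le k\le d-1$ there are indices $i<j$ with $e_i=k$ and $e_j=k+1$. In particular every level set $L_k=\{i : e_i=k\}$, for $0\le k\le d$, is nonempty. The condition then says exactly that $\min L_k<\max L_{k+1}$ for each $k<d$.

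\textbf{The map $\Phi$ from permutations to tuples.} Write $\sigma$ as the word $\sigma(1)\sigma(2)\cdots\sigma(n)$ and suppose it has exactly $d$ ascents. Set $e_{\sigma(m)}$ to be the number of ascents among positions $1,\dots,m-1$. Then the positions carrying label $k$ form the $(k+1)$-st maximal decreasing run of the word, so the level sets $L_0,\dots,L_d$ are exactly the value sets of the $d+1$ runs. Labels lie in $\{0,\dots,d\}$. At the boundary between run $k$ and run $k+1$ there is an ascent. There the last letter of run $k$, which is $\min L_k$, is smaller than the first letter of run $k+1$, which is $\max L_{k+1}$. Hence $e$ satisfies the condition and gives a monomial in $T_n^d$.

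\textbf{The inverse map $\Psi$.} Given $e$ satisfying the condition, form the word obtained by writing $L_0$ in decreasing order, then $L_1$ in decreasing order, and so on up to $L_d$. This is a permutation $\sigma$. Inside each block all adjacent pairs are descents. At each of the $d$ block boundaries the pair is (last letter of $L_k$, first letter of $L_{k+1}$) $=(\min L_k,\max L_{k+1})$, which is an ascent by the condition. So $\sigma$ has exactly $d$ ascents, and its maximal decreasing runs are precisely the blocks $L_k$. It follows that $\Phi(\Psi(e))=e$. Conversely, a permutation is recovered from its runs, each written in decreasing order, so $\Psi(\Phi(\sigma))=\sigma$.

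The only real subtlety is choosing the statistic so that the "exists $i<j$" condition matches the ascent condition at run boundaries. Using decreasing runs, with ascents as the separators, makes the boundary comparison exactly $\min L_k<\max L_{k+1}$, and everything else is bookkeeping. Summing over $d$ gives $|T_n|=n!$.
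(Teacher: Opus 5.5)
Your proof is correct and uses the same bijection as the paper. Both concatenate the level sets $\{i : e_i = k\}$ in decreasing order, and both invert by counting the ascents that precede each entry. You additionally check that the two maps are mutually inverse, via the identification of level sets with maximal decreasing runs, a step the paper leaves as straightforward.
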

The number of such permutations is called the \cdef{Eulerian number}
$E(n,d) = |T_{n}^{d}|$.
\begin{proof}
  To a monomial $\prod_{i=1}^n \alpha_i^{d-c_i}\beta_i^{c_i}$ in $T_{n}^{d}$, we associate a
  permutation in $S_n$ whose entries consist of, first, the $i$ such that
  $c_i=0$ in descending order, then the $i$ such that $c_i=1$ in descending
  order, and so on. The condition that there is $i<j$ where $c_i =k $ and
  $c_j=k+1$ implies that the least $i$ with $c_i=k$ is less than the greatest
  $j$ with $c_j=k+1$, so there is an ascent between the $i$ with $c_i=k$ and
  the $i$ with $c_i=k+1$. Since the $i$ with $c_i=k$ are placed in descending
  order for all $k$ there are no more ascents, showing that the number of
  ascents is in fact $d$. In the inverse direction, given a permutation with
  $d$ ascents, let $c_i$ be the number of ascents that appear in the
  permutation before $i$ does. It is straightforward to check that these two
  maps are inverses.
\end{proof}

\begin{corollary}\label{euler-identities}
  We have the following identities: \hfill
\begin{enumerate}[label=(\roman*)]
\item The total number of monomials is $|T_n| = \sum_{d=0}^{n-1}|T_{n}^{d}| = n!$.
\item\label{euler-identities-2} The sum of the degrees of the monomials is
  $\sum_{d=0}^{n-1}|T_{n}^{d}| d = (n-1)n!/2$.
\item\label{euler-identities-3} We have
  $\sum_{d=0}^{n-1} |T_{n}^{d}| \binom{ n+e-d}{n} = (e+1)^n$ for all $e\geq0$.
\end{enumerate}
\end{corollary}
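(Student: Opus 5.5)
All three identities follow from the bijection in the preceding lemma, which identifies $T_n^d$ with the permutations in $S_n$ having exactly $d$ ascents. Thus $|T_n^d| = E(n,d)$, and each identity reduces to a classical fact about Eulerian numbers.

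For (i), every permutation has some number of ascents between $0$ and $n-1$. Summing $E(n,d)$ over $d$ therefore counts every element of $S_n$ exactly once, giving $n!$.

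For (ii), we use the involution $\sigma \mapsto \sigma\circ w_0$, where $w_0$ reverses positions, so that $\sigma\circ w_0$ reads $\sigma$ backwards. Each of the $n-1$ adjacent positions is either an ascent or a descent, and reversal exchanges the two. Hence this involution sends a permutation with $d$ ascents to one with $n-1-d$ ascents, and so $E(n,d)=E(n,n-1-d)$. Pairing $d$ with $n-1-d$ shows that the average number of ascents is $(n-1)/2$. The sum is therefore $(n-1)n!/2$.

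For (iii), we prove Worpitzky's identity. Since $E(n,d)$ is defined through ascents, we phrase it as $(e+1)^n = \sum_d E(n,d)\binom{n+e-d}{n}$. We count the functions $c\colon\{1,\dots,n\}\to\{0,\dots,e\}$, of which there are $(e+1)^n$, by assigning each such $c$ a unique permutation, namely the one from the lemma's construction. Concretely, we list the $i$ with $c_i=0$ in descending order, then those with $c_i=1$ in descending order, and so on. Call the resulting permutation $\sigma$, and say it has $d$ ascents.

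Read along $\sigma$, the values of $c$ are weakly increasing. Moreover, at each ascent of $\sigma$ the value of $c$ must strictly increase: within a block of equal $c$-value the entries are descending, so two adjacent entries with the same $c$-value can only form a descent.

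Conversely, fix $\sigma$ with $d$ ascents. We count the sequences $0\le c_{\sigma(1)}\le\cdots\le c_{\sigma(n)}\le e$ that strictly increase at each of the $d$ ascent positions. Every such sequence arises from exactly one function $c$, and that function maps back to $\sigma$. The reason is that a strict increase at every ascent forces each block of equal value to be descending in $\sigma$, which is precisely the sorting rule above.

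We count these sequences by subtracting from each $c_{\sigma(k)}$ the number of ascents occurring before position $k$. This turns them bijectively into weakly increasing sequences of length $n$ with values in $\{0,\dots,e-d\}$. There are $\binom{n+e-d}{n}$ such sequences, and summing over $\sigma$ gives the identity.

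The main obstacle is the bijection in (iii). One has to check carefully that the strictness condition at ascents, together with the descending-within-block rule, makes the map from functions $c$ to pairs ($\sigma$, compatible sequence) well defined and invertible. Once that is established, the rest is routine.
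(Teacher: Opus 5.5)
Your proof is correct. Parts (i) and (ii) match the paper, which also obtains (ii) from the ascent/descent symmetry under reversal.

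For (iii) you take a different, self-contained route. The paper reindexes by $d\mapsto n-1-d$ and uses the symmetry $E(n,n-1-d)=E(n,d)$. It then cites Worpitzky's identity $\sum_d E(n,d)\binom{e+1+d}{n}=(e+1)^n$ as a classical fact. You instead prove the required form $\sum_d E(n,d)\binom{n+e-d}{n}=(e+1)^n$ directly. Your bijection sends functions $c\colon\{1,\dots,n\}\to\{0,\dots,e\}$ to pairs consisting of $\sigma$ and a sequence that is weakly increasing along $\sigma$ and strict at its ascents. You then shift by the ascent count to reach weakly increasing sequences in $\{0,\dots,e-d\}$. The checks you give are exactly what invertibility needs: equal values form contiguous blocks, there are no ascents inside a block, so each block is descending and $\sigma$ is recovered. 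The case $d>e$ is harmless, since both the set of sequences and $\binom{n+e-d}{n}$ are then empty or zero.

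The paper's version is shorter because it leans on a citation. Yours avoids both the citation and the symmetry step, and it extends the lemma's own construction. The monomials in $T_n^d$ are precisely the functions $c$ whose shifted sequence is identically $0$. Your count also mirrors the graded rank count in the proof of \Cref{graded-basis}. Degree-$e$ monomials of $B_n$ correspond to functions $c$ via their $\beta$-exponents, and $\binom{n+e-d}{n}$ is the number of degree-$(e-d)$ monomials in $a_0,\dots,a_n$.
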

\begin{proof}
\hfill
\begin{enumerate}[leftmargin=*,label=(\roman*)]
\item This is clear since every permutation has between $0$ and $n-1$ ascents.
\item The sum of the degrees of the monomials in $T_n$ is the total number of
  ascents in all permutations. By symmetry (composing with the reflection
  $i\mapsto n+1-i$), we see that this is equal to the total number of descents. The
  total number of ascents plus the total number of descents is $(n-1)n!$.
\item Indeed,
  \[ \sum_{d=0}^{n-1} E(n,d) \binom{ n+e-d}{n} =\sum_{d=0}^{n-1} E(n,n-1-d)
    \binom{e+1+d}{n} = \sum_{d=0}^{n-1} E(n,d) \binom{e+1+d}{n} = (e+1)^n \] by a
  change of variables $d \to n-1-d$, the identity $E(n,n-1-d)=E(n,d)$, and
  Worpitzky's identity. \qedhere
\end{enumerate}
\end{proof}

\begin{lemma}\label{graded-basis}
  $B_n$ is free of rank $n!$ as a module over $\ZZ[a_0,\dots, a_n]$, with basis
  $T_n$.
\end{lemma}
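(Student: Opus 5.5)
The plan is a rank comparison in each graded degree combined with a spanning argument, so that linear independence comes for free. Both $B_n$ and $\ZZ[a_0,\dots,a_n]$ are graded with $\deg a_k = 1$, and the multiplication map
\[
  \Phi\colon \bigoplus_{t\in T_n} \ZZ[a_0,\dots,a_n]\cdot t \longrightarrow B_n
\]
is a homomorphism of graded $\ZZ[a_0,\dots,a_n]$-modules once we give the summand indexed by $t\in T_n^d$ a shift by $d$.

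In degree $e$, the source is a free $\ZZ$-module of rank $\sum_{d=0}^{n-1}|T_n^d|\binom{n+e-d}{n}$. Here $\binom{n+e-d}{n}$ counts the monomials of degree $e-d$ in the $n+1$ variables $a_0,\dots,a_n$.

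The target in degree $e$ is free over $\ZZ$ with basis the monomials $\prod_i \alpha_i^{e-c_i}\beta_i^{c_i}$ with $0\le c_i\le e$. It therefore has rank $(e+1)^n$.

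By \Cref{euler-identities}\ref{euler-identities-3} the two ranks agree. A surjective homomorphism between free $\ZZ$-modules of the same finite rank is an isomorphism. Hence it suffices to prove that $\Phi$ is surjective in every degree, i.e. that $T_n$ generates $B_n$ as a $\ZZ[a_0,\dots,a_n]$-module.

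For the spanning statement I would use graded Nakayama. Since $B_n$ is nonnegatively graded and the $a_k$ have positive degree, it suffices to show that the images of $T_n$ span the quotient $B_n/(a_0,\dots,a_n)B_n$ over $\ZZ$. (Nakayama is applied degree by degree, by induction on $e$, which is valid over $\ZZ$ because each graded piece is finitely generated.) So the task becomes a straightening statement: every monomial $m=\prod_i\alpha_i^{e-c_i}\beta_i^{c_i}$ is congruent, modulo the ideal $(a_0,\dots,a_n)$ and lower terms in a suitable order, to a $\ZZ$-combination of elements of $T_n$. In particular every monomial of degree $\ge n$ must lie in that ideal up to such terms.

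To organize this I would fix a total order on exponent vectors $(c_1,\dots,c_n)$, for instance lexicographic after sorting, compatible with the bijection to permutations. I would then use two families of relations:
\begin{itemize}
  \item the $a_k$ themselves;
  \item the relations obtained from the partial products $f^{(j)}=\prod_{i\ge j}(\alpha_ix-\beta_iy)$, whose coefficients are expressible via the $a_k$ and the variables $\alpha_i,\beta_i$ with $i<j$.
\end{itemize}
This is the binary analogue of the Bhargava--Satriano argument that $\ZZ[X_1,\dots,X_n]$ is spanned over $\ZZ[\Sigma_1,\dots,\Sigma_n]$ by the monomials $\prod X_i^{e_i}$ with $e_i<i$. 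The key identity is $f^{(j)}(\beta_j,\alpha_j)=0$ together with the factorization $f=f^{(1)}=(\alpha_1x-\beta_1y)f^{(2)}$. Descending induction on $j$ then lets one rewrite any monomial failing the defining condition of $T_n$ (some $k<d$ with no $i<j$ having $c_i=k$, $c_j=k+1$) in terms of smaller monomials plus elements of the ideal.

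The main obstacle is exactly this straightening step: producing, for an arbitrary monomial outside $T_n$, an explicit reduction in which the leading monomial strictly decreases, and checking that the condition ``for every $k<d$ there are $i<j$ with $c_i=k$, $c_j=k+1$'' is precisely the set of normal forms. I would first try to dehomogenize on the open set where all $\alpha_i$ are invertible. There one can reduce to the Bhargava--Satriano basis with $X_i=\beta_i/\alpha_i$. Then I would track how the exponents $e_i<i$ translate into the ascent condition after rehomogenizing, using the bijection with permutations.

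If the dehomogenization bookkeeping fails (it loses control at primes dividing $a_0$), the fallback is geometric. Over each field $F$ (namely $\QQ$ and every $\mathbf F_p$), $B_n\otimes F$ is the section ring of $\mathcal O(1,\dots,1)$ on $(\Pone)^n$, and $\ZZ[a]\otimes F$ is the coordinate ring of $\PP^n$. The map $(\Pone)^n\to\operatorname{Sym}^n\Pone\cong\PP^n$ is finite and flat, and flatness gives that $a_0,\dots,a_n$ is a regular sequence on $B_n\otimes F$. Then the quotient $B_n\otimes F/(a)$ has dimension $n!$, which matches $|T_n|$. Spanning by $T_n$ modulo $(a)$ over every such $F$ yields surjectivity of $\Phi$ over $\ZZ$ in each degree.
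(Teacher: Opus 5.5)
Your reduction matches the paper's. The degreewise rank count via \Cref{euler-identities}\ref{euler-identities-3} shows it suffices to prove that $T_n$ generates $B_n$ over $\ZZ[a_0,\dots,a_n]$, and the graded Nakayama reformulation is harmless. But that spanning statement is the whole combinatorial content of the lemma, and your proposal never proves it. Each route you sketch either fails or stops short:
\begin{enumerate}[label=(\roman*)]
\item \emph{Partial products.} This route rests on a false claim. In the binary setting the coefficients of $f^{(j)}=\prod_{i\ge j}(\alpha_ix-\beta_iy)$ are \emph{not} polynomials in the $a_k$ and the $\alpha_i,\beta_i$ with $i<j$, because dividing by the non-monic linear form $\alpha_1x-\beta_1y$ is not integral. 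Already for $n=2$, $\alpha_2\notin\ZZ[a_0,a_1,a_2,\alpha_1,\beta_1]$: every bihomogeneous component of an element of that ring has degree in $\alpha_1,\beta_1$ at least its degree in $\alpha_2,\beta_2$.
\item \emph{Dehomogenization.} This cannot be transported either. A homogeneous basis of $B_n$ must have exactly $E(n,d)$ elements in degree $d$, as forced by the Hilbert series; for $n=3$ that is $1,4,1$. Yet the Bhargava--Satriano monomials $X_3^2$ and $X_2X_3^2$ both need degree $\ge 2$ once denominators are cleared. So there is no degree-preserving dictionary between the condition $e_i<i$ and the ascent condition.
\item \emph{Geometric fallback.} This only re-proves that $B_n\otimes F$ is free with $\dim_F (B_n\otimes F)/(a)=n!$. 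It does not identify \emph{which} $n!$ monomials span, so you still need that $T_n$ spans modulo $(a)$.
\end{enumerate}

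The missing idea is an explicit straightening rule with a well-chosen termination order. The paper inducts on monomials $m=\prod_i\alpha_i^{d-e_i}\beta_i^{e_i}$, ordered lexicographically by $\bigl(d,\ \sum_i e_i^2,\ \sum_i(n-i)e_i\bigr)$.
\begin{itemize}
\item If $m\notin T_n$, pick $k\le d-1$ such that there are no $i<j$ with $e_i=k$ and $e_j=k+1$; such a $k$ exists, including when $d\ge n$.
\item Set $S_0=\{i: e_i\ge k+1\}$. Let $m'$ be $m$ with one factor $\beta_i$ removed for each $i\in S_0$ and one factor $\alpha_i$ removed for each $i\notin S_0$, so $\deg m'=d-1$.
\item Expanding gives $\pm a_{|S_0|}m'=\sum_{|S|=|S_0|}m''_S$ with $m''_{S_0}=m$.
\item For $S\ne S_0$, the quantity $\sum_i e_i^2$ does not increase. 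Equality forces the moved indices to have exponents exactly $k$ (those in $S\setminus S_0$) and $k+1$ (those in $S_0\setminus S$).
\item In the equality case, failure of the ascent condition at $k$ says every index of the first kind exceeds every index of the second. Hence $\sum_i(n-i)e_i$ strictly drops.
\end{itemize}
Modulo $(a)$ this is exactly the reduction you were looking for, and it is where the ascent condition emerges as the normal-form condition.
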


\begin{proof}
  Let us first check that the $n!$ monomials in $T_n$ generate $B_n$. We can
  check that every monomial
  $m = \prod_{i=1}^n \beta_i^{e_i} \alpha_i^{d-e_i} \in B_n$ lies in the submodule
  $\langle T_n\rangle$ generated by $T_n$ by strong induction on the ordering where we
  order lexicographically first by $\deg(m) = d$, then by
  $r_1(m) \colonequals \sum_{i=1}^n e_i^2$, then by
  $r_2(m) \colonequals \sum_{i=1}^n (n-i) e_i$.

  In the case where for each $0\leq k\leq d-1$ we have some $i<j$ with $e_i=k$ and
  $e_{j} =k+1$, we must have $0\leq d\leq n-1$ and so the monomial lies in
  $T_n$. Otherwise, fix some $k$ where this is not the case (so $i>j$ for all
  $i,j$ with $e_i = k$ and $e_j = k+1$) and let $S_0$ be the set of $i$ with
  $e_i\geq k+1$. The monomial
\[
  m' \colonequals \prod_{\substack{1\leq i\leq n:\\ i\notin S_0}} \alpha_i^{d-e_i-1} \beta_i^{e_i}
  \prod_{\substack{1\leq i\leq n:\\ i\in S_0}} \alpha_i^{d-e_i} \beta_i^{e_i-1} \in B_n
\]
lies in $\langle T_n\rangle$ by the induction hypothesis since
$\deg(m') = d-1 < d = \deg(m)$. Hence so does $(-1)^k a_{|S_0|} m'$. This
product is the sum of the monomials
\[
  m''_S \colonequals \prod_{\substack{i\in S\setminus S_0}} \alpha_i^{d-e_i-1} \beta_i^{e_i+1}
  \prod_{\substack{i\in S_0\setminus S}} \alpha_i^{d-e_i+1} \beta_i^{e_i-1}
  \prod_{\substack{\textnormal{all other }i}} \alpha_i^{d-e_i} \beta_i^{e_i} \in B_n
\]
over all subsets $S$ of $\{1,\dots,n\}$ of size $|S| = |S_0|$. The monomial
corresponding to $S = S_0$ is
$\prod_{i=1}^n \beta_i^{e_i} \alpha_i^{d-e_i} = m$. We will check that the other monomials
(corresponding to $S\neq S_0$) lie in $\langle T_n\rangle$ by the induction hypothesis, which
then implies that $m$ lies in $\langle T_n\rangle$.

We have $\deg(m_S'') = \deg(m_S)$. Moreover,
\[
  r_1(m) - r_1(m_S'') = \sum_{i\in S\setminus S_0} \underbrace{\left(e_i^2 - (e_i +
      1)^2\right)}_{= -2e_i-1 \geq -2k-1} + \sum_{i\in S_0\setminus S} \underbrace{\left(e_i^2
      - (e_i - 1)^2\right)}_{= 2e_i-1 \geq 2k+1} \geq 0
\]
since $e_i \leq k$ for all $i\notin S_0$ and $e_i\geq k+1$ for all $i\in S_0$ and
$|S\setminus S_0| = |S_0\setminus S|$. Equality holds if and only if $e_i = k$ for all
$i\in S\setminus S_0$ and $e_i = k+1$ for all $i\in S_0\setminus S$. In the case of equality,
\[
  r_2(m) - r_2(m_S'') = \sum_{i\in S\setminus S_0} (n-i)\left(e_i - (e_i+1)\right) + \sum_{i\in
    S_0\setminus S} (n-i)\left(e_i - (e_i-1)\right) = \sum_{i\in S\setminus S_0} i - \sum_{i\in S_0\setminus S} i
  > 0
\]
since $|S\setminus S_0| = |S_0\setminus S| > 0$ and $i > j$ for all $i,j$ with
$i\in S\setminus S_0$ (and thus $e_i=k$) and $j\in S_0\setminus S$ (and thus
$e_j = k+1$). We have therefore shown that for every $S\neq S_0$ with
$|S| = |S_0|$, the tuple $(\deg(m_S''),r_1(m_S''),r_2(m_S''))$ is
lexicographically smaller than the tuple $(\deg(m),r_1(m),r_2(m))$, so $m_S''$
indeed lies in $\langle T_n\rangle$ by the induction hypothesis. This concludes the proof
that the monomials in $T_n$ generate $B_n$.

The rank of the degree $d$ part of $\ZZ[a_0,\dots,a_n]$ is $\binom{n+d}{n}$.
Hence the rank of the degree $e$ part of the free graded module on the putative
basis is $\sum_{d=0}^{n-1} |T_{n}^{d}| \binom{ n+e-d}{n}$, which equals
$(e+1)^n$ by \Cref{euler-identities}\ref{euler-identities-3}. The rank of the
degree $e$ part of $B_n$ is also $(e+1)^n$. Since the map from the free graded
module on the putative basis to $B_n$ is a surjective map between modules of
the same rank, it is injective, and so the putative basis is in fact a
basis. \end{proof}

Let $R$ be a ring. Pick a \cdef{primitive} element
$f(x,y)=f_0x^n+\cdots+f_ny^n$ in $V_n(R)$, by which we mean an element whose
coefficients generate the unit ideal in $R$. We define the ring $\mathcal{S}(f;R)$ to be
\begin{equation*}
  \mathcal{S}(f;R)\colonequals B_n(R)/(a_0-f_0,a_1-f_1,\ldots,a_n-f_n).
\end{equation*}
Note that $\mathcal{S}(f;R)$ is a free $R$-module of rank $n!$ with basis $T_n$.

We will show in several steps that the $S_n$-closure of $R_f$ is isomorphic to
$\mathcal{S}(f;R)$. To do this, we will first define maps
$\mu^{(j)}\colon R_f \to \mathcal{S}(f;R)$ for $j$ from $1$ to $n$, which we will check are
ring homomorphisms, and then define a ring homomorphism
$\mu \colon R_f^{\otimes n} \to \mathcal{S}(f;R)$ by
$\mu(x_1 \otimes \dots \otimes x_n)=\mu^{(1)} (x_1) \cdots \mu^{(n)} (x_n)$. We will check that
$\mu$ is surjective and that its kernel is $I(R_f/R)$, which implies that it
gives an isomorphism from the $S_n$-closure
$\SnC{R_f}{R} = R_f^{\otimes n}/ I(R_f/R)$ to $\mathcal{S}(f;R)$.

We first define $\mu^{(j)}$ as the unique $R$-linear map $R_f \to \mathcal{S}(f;R)$ such that

\begin{equation}\label{eq:muj-def} 
  \displaystyle \mu^{(j)}(\zeta_k) = 1,  \textrm{ if } k=0, \quad \text{ and } \quad \mu^{(j)}(\zeta_k) = (-1)^{k-1}  \sum_{ \substack {j \in S \subseteq \{1,\dots,n\} \\ |S|=k}} \prod_{i \notin S} \alpha_i \prod_{i\in S} \beta_i,  \textrm{ if }k>0.
\end{equation}

Recall the notation introduced in \Cref{eq:zeta_k}. In the case where $f_0$ is
a unit in $R$, everything simplifies. First when $f_0$ is a unit, $R_f$ is
isomorphic to $R[x]/f(x,1)$ under the map sending $\zeta_0$ to $1$ and
$\zeta_k$ to $f_0 \theta^k+ \dots + f_{k-1} \theta$. Second, when $f_0$ is a unit,
$\mathcal{S}(f;R)$ is a quotient of
$B_n(R) [ a_0^{-1}]= R[ \frac{\beta_1}{\alpha_1},\dots ,\frac{\beta_n}{\alpha_n}, a_0,a_0^{-1}]$
(where
$\frac{\beta_j}{\alpha_j} = \frac{ \alpha_1 \dots \alpha_{j-1} \beta_j \alpha_{j+1} \dots \alpha_n }{a_0} $ with
the numerator in $B_n$) so that
\[\mathcal{S}(f;R) = \left. R\left[ \tfrac{\beta_1}{\alpha_1},\dots ,\tfrac{\beta_n}{\alpha_n}\right]
    \middle/ \left( (-1)^k \Sigma_k \left( \tfrac{\beta_1}{\alpha_1},\dots,
        \tfrac{\beta_n}{\alpha_n}\right) - \tfrac{f_k}{f_0} \right)_{k \in \{1,\dots,n\}
    }\right. \] where $\Sigma_k$ denotes the $k$-th elementary symmetric polynomial.
Note that this is exactly the Bhargava-Satriano formula for the $S_n$-closure
of the monogenic ring $R_f = R_{f/f_0}$.

Our strategy will be to reduce to the case when $f_0$ is a unit in every step
of the argument where this is possible. The key lemma is the following.

\begin{lemma}\label{monogenic-case} When $f_0$ is a unit in $R$, the map
  $\mu^{(j)}$ is under the above isomorphisms the unique ring homomorphism:
  \[ R[x]/f(x,1) \to \left. R\left[ \tfrac{\beta_1}{\alpha_1},\dots
        ,\tfrac{\beta_n}{\alpha_n}\right] \middle/ \left( (-1)^k \Sigma_k \left(
          \tfrac{\beta_1}{\alpha_1},\dots, \tfrac{\beta_n}{\alpha_n}\right) - \tfrac{f_k}{f_0}
      \right)_{k \in \{1,\dots,n\} }\right. \] fixing $R$ and sending
  $\theta \colonequals x \md (f(x,1))$ to $\frac{\beta_j}{\alpha_j}$. \end{lemma}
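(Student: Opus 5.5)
Write $t_i\colonequals \beta_i/\alpha_i$ in the target ring and $g(x)\colonequals f(x,1)/f_0 = x^n + (f_1/f_0)x^{n-1}+\dots+f_n/f_0$. The plan has two steps: first show that the ring homomorphism in the statement exists and is unique, then show that it agrees with $\mu^{(j)}$ on the basis $\zeta_0,\dots,\zeta_{n-1}$ of $R_f$.

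For existence and uniqueness, note that $R[x]/f(x,1)=R[x]/g(x)$, since $f_0$ is a unit. A ring homomorphism fixing $R$ is therefore determined by the image of $\theta$. Sending $\theta\mapsto t_j$ is well defined exactly when $g(t_j)=0$. The defining relations say $(-1)^k\Sigma_k(t_1,\dots,t_n)=f_k/f_0$, so in the target
\[
\prod_{i=1}^n (x-t_i)=\sum_{k=0}^n (-1)^k\Sigma_k(t)\,x^{n-k}=g(x).
\]
Substituting $x=t_j$ gives $g(t_j)=0$.

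For agreement, both maps are $R$-linear and $\mu^{(j)}(\zeta_0)=1$, so it remains to check, for $1\le k\le n-1$, that
\[
f_0t_j^k+f_1t_j^{k-1}+\dots+f_{k-1}t_j
\]
equals the image of $\mu^{(j)}(\zeta_k)$ under the identification. Dividing the sum over $S\ni j$ in \eqref{eq:muj-def} by $a_0=f_0=\prod_i\alpha_i$, that image is $f_0(-1)^{k-1}e_{k-1}^{(j)}\,t_j$. Here $e_m^{(j)}$ denotes the $m$-th elementary symmetric polynomial in the variables $t_i$ with $i\ne j$. So after dividing by $f_0$ the required identity reads
\[
\sum_{m=0}^{k-1}(-1)^m\Sigma_m(t)\,t_j^{k-m}=(-1)^{k-1}e^{(j)}_{k-1}t_j,
\]
where we used $f_m/f_0=(-1)^m\Sigma_m(t)$. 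This is a polynomial identity in the $t_i$. To prove it, compare coefficients in
\[
\prod_i(1-t_iz)=(1-t_jz)\prod_{i\ne j}(1-t_iz).
\]
This gives $(-1)^m\Sigma_m=(-1)^m e^{(j)}_m+(-1)^{m}t_je^{(j)}_{m-1}$, which is the recursion $\Sigma_m=e^{(j)}_m+t_je^{(j)}_{m-1}$. Inserting this into the left-hand side, the sum telescopes: consecutive terms $(-1)^m e^{(j)}_m t_j^{k-m}$ and $(-1)^{m+1}t_je^{(j)}_{m}t_j^{k-m-1}$ cancel. The only term left is $(-1)^{k-1}e^{(j)}_{k-1}t_j$, as required. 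Equivalently, one can use that $\sum_{m=0}^{k}(-1)^m\Sigma_m t_j^{k-m}=(-1)^k e^{(j)}_k$.

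I expect the main obstacle to be bookkeeping rather than ideas. One has to verify that the identification of $\mathcal{S}(f;R)$ with the Bhargava--Satriano ring really sends the degree-$1$ element $\prod_{i\notin S}\alpha_i\prod_{i\in S}\beta_i$ to $f_0\prod_{i\in S}t_i$. This holds because we invert $a_0=\prod\alpha_i$, and $a_0=f_0$ in the quotient. One also has to keep the signs $(-1)^{k-1}$ against $(-1)^m$ consistent throughout. Uniqueness of the homomorphism is automatic because $\theta$ generates the source ring.
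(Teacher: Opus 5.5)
Your proposal is correct and follows the paper's proof: existence and uniqueness are argued exactly as in the paper, and agreement with $\mu^{(j)}$ is checked on the basis $\zeta_0,\dots,\zeta_{n-1}$. The only difference is that you verify the identity for $\zeta_k$ in closed form by telescoping with $\Sigma_m=e^{(j)}_m+t_je^{(j)}_{m-1}$, whereas the paper checks $\zeta_1$ directly and then inducts on $k$ via $\zeta_k=\theta(\zeta_{k-1}+f_{k-1})$ and $\mu^{(j)}(\zeta_k)=\frac{\beta_j}{\alpha_j}(\mu^{(j)}(\zeta_{k-1})+a_{k-1})$, which is the same splitting of the elementary symmetric polynomials written recursively.
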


\begin{proof} Such a ring homomorphism exists since the relations
  $(-1)^k \Sigma_k ( \frac{\beta_1}{\alpha_1},\dots, \frac{\beta_n}{\alpha_n}) =\frac{f_k}{f_0} $
  force $ \prod_{i=1}^n (T - \frac{\beta_i}{\alpha_i}) = f(x,1)/f_0$ so
  $\frac{\beta_j}{\alpha_j}$ satisfies the polynomial equation $f(x,1)=0$. It is unique
  since $\theta$ generates $R[x]/f(x,1)$.

  To check it agrees with $\mu^{(j)}$, it suffices to check on the basis
  $\zeta_0,\dots,\zeta_{n-1}$. For $\zeta_0=1$ this is obvious. For $\zeta_k$ we take the
  formula $f_0 \theta^k+ \dots + f_{k-1} \theta$ for $\zeta_k$ and plug in
  $\frac{\beta_j}{\alpha_j}$ for $\theta$ and $a_{i}$ for $f_i$, and check this agrees with
  $\mu^{(j)}(\zeta_k)$. For $\zeta_1$ this is an easy check. For larger $k$ this follows
  by induction using the identities $\zeta_{k}=\theta(\zeta_{k-1}+f_{k-1})$ and
  $\mu^{(j)}(\zeta_k)=\frac{\beta_j}{\alpha_j}( \mu^{(j)}(\zeta_{k-1})+a_{k-1})$, clear from the
  definition of $\zeta_k$ and the definition \eqref{eq:muj-def} of $\mu^{(j)}$.
\end{proof}

\begin{lemma} $\mu^{(j)}$ is a ring homomorphism. \end{lemma}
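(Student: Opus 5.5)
We reduce to the case where $f_0$ is a unit, where \Cref{monogenic-case} already identifies $\mu^{(j)}$ with a ring homomorphism. The claim is that certain polynomial identities hold in $\mathcal{S}(f;R)$. Concretely, $\mu^{(j)}$ is a ring homomorphism exactly when $\mu^{(j)}(1)=1$ and
\[
\mu^{(j)}(\zeta_k\zeta_l)=\mu^{(j)}(\zeta_k)\mu^{(j)}(\zeta_l)
\qquad\text{for all }0\le k,l\le n-1.
\]
The first condition holds by definition. For the second, write $\zeta_k\zeta_l=\sum_m c_{klm}\zeta_m$ in $R_f$. By Nakagawa's multiplication table (see \cite{Nakagawa89}, \cite{Wood-binary}), the structure constants $c_{klm}$ are universal integer polynomials in $f_0,\dots,f_n$.

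The identity to verify is therefore
\[
\mu^{(j)}(\zeta_k)\mu^{(j)}(\zeta_l)=\sum_m c_{klm}(a_0,\dots,a_n)\,\mu^{(j)}(\zeta_m).
\]
The images $\mu^{(j)}(\zeta_k)$ are defined by the universal formula \eqref{eq:muj-def} in $B_n(R)$. So the identity takes place in $\mathcal{S}(f;R)$ and is the image of the corresponding identity in the universal ring $\mathcal{S}(f^{\mathrm{univ}};\ZZ[f_0,\dots,f_n]) = B_n$, via the map $\ZZ[a_0,\dots,a_n]\to R$ sending $a_i\mapsto f_i$. Here $f^{\mathrm{univ}}$ is the form with coefficients $a_i$ over $\ZZ[a_0,\dots,a_n]$. (Primitivity plays no role at the universal level, since we only use that $B_n$ surjects onto $\mathcal{S}(f;R)$ after base change.) It therefore suffices to prove the identity in $B_n$ itself, that is, with $a_i$ treated as variables.

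To prove it in $B_n$ we use that $B_n$ is a domain: it is a subring of a polynomial ring. Hence $B_n\hookrightarrow B_n[a_0^{-1}]$, so it suffices to check the identity after inverting $a_0$. After inverting $a_0$ we are working in $\mathcal{S}(f^{\mathrm{univ}};\ZZ[a_0^{\pm1},a_1,\dots,a_n])$, because by \Cref{graded-basis} this ring is just the base change of the free module $B_n$. In that setting $f_0$ is a unit, so \Cref{monogenic-case} says $\mu^{(j)}$ coincides with the genuine ring homomorphism $\theta\mapsto \beta_j/\alpha_j$. In particular it is multiplicative on the $\zeta_k$, which gives the identity.

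The main obstacle is bookkeeping: making sure that the structure constants of $R_f$ really are given by the same universal polynomials, compatible with base change, both in the universal case and for $R$. This holds because $R_f$ is defined by the universal construction of Nakagawa and Wood, which is compatible with base change. A more elementary alternative is to check the identity over $R=\ZZ[a_0,\dots,a_n]$ and then specialize. Over that ring the form is not primitive, but we never need primitivity, only the surjection $B_n\otimes_{\ZZ[a]}R\to\mathcal{S}(f;R)$, which holds by definition.
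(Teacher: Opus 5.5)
Your proposal is correct and takes the same route as the paper: you express multiplicativity of $\mu^{(j)}$ as universal polynomial identities in the coefficients, and you verify them when the leading coefficient is a unit via \Cref{monogenic-case}. The paper states the ``hence in general'' step briefly, using the bases $\zeta_k$ and $T_n$ to get identities in $\ZZ[f_0,\dots,f_n]$; you spell it out by proving the identity in $B_n$ and using that $B_n$ is a domain, so $B_n\hookrightarrow B_n[a_0^{-1}]$, which is equally valid.
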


\begin{proof} The multiplication of $R_f$ in the basis
  $\zeta_0,\dots, \zeta_{n-1}$ is given by universal polynomials in
  $f_0,\dots, f_n$ (see \cite[Proposition 1.1]{Nakagawa89}) and the same is
  true for the multipication of $\mathcal{S}(f;R)$ in the basis $T_n$, so this statement
  can be expressed as a collection of polynomial identities in $f_0,\dots,f_n$.
  By Lemma \ref{monogenic-case} these identities hold when $f_0$ is invertible
  and hence they hold in general. \end{proof}

The group $\GL_2(R)$ acts on $R[\alpha_1,\dots,\alpha_n,\beta_1,\dots,\beta_n]$ via
$R$-algebra automorphisms, where
$\gamma=\left(\begin{smallmatrix}a&b\\c&d\end{smallmatrix}\right)\in \GL_2(R)$
corresponds to the automorphism $\iota_\gamma$ given by sending
$(\alpha_i,\beta_i)$ to $(a\alpha_i-c\beta_i,-b\alpha_i+d\beta_i)$. This action restricts to an action of
the subalgebra $B_n(R)$ of $R[\alpha_1,\dots,\alpha_n,\beta_1,\dots,\beta_n]$.

We will also denote $\iota_\gamma(t)$ by $t^\gamma$ for any
$t\in R[\alpha_1,\dots,\alpha_n,\beta_1,\dots,\beta_n]$. It is easy to check that for
$f(x,y)\colonequals\prod_i(\alpha_ix-\beta_iy) \in V_n(B_n(R))$, we have
\begin{equation*}
f^\gamma(x,y)=\prod(\alpha_i^\gamma x-\beta_i^\gamma y),
\end{equation*}
where (as always) $f^\gamma$ is defined as in (\ref{eq:action-on-forms}). Note that
$\iota_\gamma^{-1}$ induces an isomorphism
$\mathcal{S}(f;R) \stackrel\sim\rightarrow \mathcal S(f^\gamma;R)$. Indeed, for
$f\in V_n(R)$, let $F_f(x,y)$ denote the binary $n$-ic form
$f(x,y)-\prod_i(\alpha_ix-\beta_iy) \in V_n(B_n(R))$. Then we have
\begin{align*}
  \mathcal{S}(f;R)
  &= B_n(R)/(f_i-a_i)_{i\in\{1,\dots,n\}}
  = B_n(R)/((F_f)_i)_{i\in\{1,\dots,n\}}
  = B_n(R)/((F_f^\gamma)_i)_{i\in\{1,\dots,n\}} \\
  &= B_n(R)/((f^\gamma(x,y) - \prod(\alpha_j^\gamma x-\beta_j^\gamma y))_i)_{i\in\{1,\dots,n\}} \\
  &\cong B_n(R)/((f^\gamma(x,y) - \prod(\alpha_j x-\beta_j y))_i)_{i\in\{1,\dots,n\}}
  = B_n(R)/(f^\gamma_i - a_i)_{i\in\{1,\dots,n\}}
  = \mathcal S(f^\gamma;R),
\end{align*}
where the isomorphism on the third line is induced by $\iota_\gamma^{-1}$ (which acts
trivially on $f^\gamma$).

We will be using Nakagawa's explicit isomorphism $\psi_\gamma$ between the rings
$R_f$ and $R_{f^\gamma}$ in the next result. Note however that Nakagawa uses the
right action of $\GL_2$ on $V_n$ while we use the left action. To translate
between these actions it is only necessary to take transposes of elements in
$\GL_2(R)$. That is, Nakagawa's $\gamma\cdot f$ is our $f^{\gamma^t}$.
\begin{lemma}\label{lem-change-basis-cd} Let $f \in V_n(R)$ be an element whose
  coefficients generate the unit ideal, and let $\gamma \in \mathrm{GL}_2(R)$. We
  obtain the commutative diagram
\[ \begin{tikzcd}
R_f \arrow[d, "\psi_\gamma"] \arrow[r,"\mu^{(j)}"] & \mathcal{S}(f;R) \arrow[d,"\iota_\gamma^{-1}"] \\
R_{f^\gamma } \arrow[r,"\mu^{(j)}"] & \mathcal{S}(f^\gamma ;R)
\end{tikzcd} \] where $\psi_\gamma$ is the isomorphism
$R_f \to R_{f^\gamma }=R_{\gamma^t\cdot f}$ from \cite[Corollary on p. 226]{Nakagawa89} and
$\iota_\gamma^{-1}$ is the isomorphism described above. \end{lemma}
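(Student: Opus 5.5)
The commutativity of the diagram is a collection of polynomial identities, so I will prove it by reducing to a generic, localized situation. Both compositions $\iota_\gamma^{-1}\circ\mu^{(j)}$ and $\mu^{(j)}\circ\psi_\gamma$ are $R$-linear maps $R_f\to\mathcal S(f^\gamma;R)$. I will check them on the basis $\zeta_0,\dots,\zeta_{n-1}$ of $R_f$ and expand the results in the basis $T_n$ of $\mathcal S(f^\gamma;R)$.

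The coefficients that arise are universal polynomials in $f_0,\dots,f_n$ and $a,b,c,d$, with $\det(\gamma)^{-1}$ also allowed. This holds for three reasons:
\begin{itemize}
\item Nakagawa's $\psi_\gamma$ is given by explicit universal formulas.
\item $\mu^{(j)}$ is given by \eqref{eq:muj-def}.
\item $\iota_\gamma^{-1}$ acts on $B_n(R)$ by a linear substitution in the $\alpha_i,\beta_i$. Reducing its output modulo $(a_i-f^\gamma_i)$ to the basis $T_n$ uses the universal reduction from \Cref{graded-basis}.
\end{itemize}
It therefore suffices to prove the identity over the universal ring $R_0=\ZZ[f_0,\dots,f_n,a,b,c,d,\det^{-1}]$.

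Since $R_0$ is a domain and $\mathcal S(\cdot;R_0)$ is free, the identity may be checked after inverting $f_0$ and $f_0^\gamma=f(a,c)$. Both elements are nonzero in $R_0$. After this localization, \Cref{monogenic-case} applies on both sides.

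Since $\theta$ generates $R_f[f_0^{-1}]=R[x]/f(x,1)$ as an algebra, and every map in the diagram is a ring homomorphism, it suffices to track $\theta$.
\begin{itemize}
\item By \Cref{monogenic-case}, $\mu^{(j)}(\theta)=\beta_j/\alpha_j$.
\item Applying $\iota_\gamma^{-1}$ to this gives the image of $\beta_j/\alpha_j$ under $\gamma^{-1}$, namely a Möbius transform of $\beta_j/\alpha_j$. Concretely, $(\alpha_j,\beta_j)\mapsto(a'\alpha_j-c'\beta_j,\,-b'\alpha_j+d'\beta_j)$ where $\gamma^{-1}=\left(\begin{smallmatrix}a'&b'\\c'&d'\end{smallmatrix}\right)$.
\item Nakagawa's isomorphism, in the monic-localized setting, sends $\theta$ to the root of $f^\gamma$ corresponding to $\theta$ under the fractional linear change of variables. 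In the paper's convention, if $[\theta:1]$ is a root of $f$, then $\gamma^{-1}[\theta:1]$ is a root of $f^\gamma$.
\item Then $\mu^{(j)}$ on $R_{f^\gamma}$ sends $\theta'$ to $\beta_j/\alpha_j$ in $\mathcal S(f^\gamma;R)$.
\end{itemize}
Commutativity then amounts to checking that $\iota_\gamma^{-1}(\beta_j/\alpha_j)$ equals $\mu^{(j)}$ of the Möbius image of $\theta$. The identification of $\mathcal S(f;R)$ with $\mathcal S(f^\gamma;R)$ is itself built from the substitution $\iota_\gamma^{-1}$ on the linear factors $\alpha_ix-\beta_iy$, and this is what makes the two Möbius transforms agree.

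The main obstacle is bookkeeping rather than ideas. One must pin down Nakagawa's formula for $\psi_\gamma$, including the transpose convention and the correct power of $\det\gamma$ (since $f^\gamma$ may differ from Nakagawa's normalization by a unit scalar). One must then verify that it really sends $\theta$ to the correct fractional linear image of $\theta$ under $\gamma^{-1}$, rather than under $\gamma$ or $\gamma^t$. I would first handle the generators of $\GL_2$ separately: diagonal matrices, the unipotent $\left(\begin{smallmatrix}1&b\\0&1\end{smallmatrix}\right)$, and the swap $\left(\begin{smallmatrix}0&1\\1&0\end{smallmatrix}\right)$. For the swap, $f_0$ is not preserved, so I would instead localize at $f_0f_n$.

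For a single generator, checking $\theta$ together with $\zeta_1$ directly should suffice. Compatibility of both $\psi$ and $\iota$ with composition ($\psi_{\gamma\delta}=\psi_\delta\psi_\gamma$ and $\iota_{\gamma\delta}^{-1}=\iota_\delta^{-1}\iota_\gamma^{-1}$ in the appropriate order) then extends the result to the generic $\gamma$. Since the identity is polynomial, it holds on the Zariski-dense set generated by these elements, and therefore it holds in general.
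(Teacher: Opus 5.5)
Your reduction is the paper's: treat commutativity as a universal polynomial identity, invert $f_0$ and $f^\gamma_0=f(a,c)$, and compare the two images of $\theta$. The gap is in the one step with real content, namely what Nakagawa's $\psi_\gamma$ does to $\theta$. You assert an answer, then defer its verification to a check on generators of $\GL_2$ glued together by $\psi_{\gamma\delta}=\psi_\delta\circ\psi_\gamma$. That composition law is not justified. Nakagawa builds $\psi_\gamma$ by reduction to three separate cases, and nothing you cite says the resulting maps are functorial in $\gamma$. The obvious way to prove functoriality is to already know $\psi_\gamma(\theta)$ for every $\gamma$, which is essentially the lemma itself.

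The missing idea, which is how the paper settles this, is uniqueness. For $n\ge 3$ and $f$ with Galois group $S_n$, the field $K_f$ has no nontrivial automorphisms. So there is exactly one isomorphism $K_f\to K_{f^\gamma}$. Hence $\psi_\gamma\otimes\QQ$, in whichever of Nakagawa's cases, must be the M\"obius map $\theta\mapsto\frac{a\theta'+b}{c\theta'+d}$, where $\theta'$ is the generator of $R_{f^\gamma}$. Such $f$ are Zariski dense, so your own polynomial-identity argument extends the formula to all $f$. No generator reduction or composition law is needed. (For $n=2$ this uniqueness fails, and a separate argument would be required.)

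The directions in your write-up also need fixing, because they are exactly what makes the identity hold. The element $\psi_\gamma(\theta)$ is not a root of $f^\gamma$. It is the root $\frac{a\theta'+b}{c\theta'+d}$ of $f(x,1)$ inside $K_{f^\gamma}$, since $f^\gamma(\theta',1)=f(a\theta'+b,\,c\theta'+d)=0$. This is the M\"obius image of $\theta'$ under $\gamma$, not $\gamma^{-1}$. Likewise, your explicit substitution for $\iota_\gamma^{-1}$ is correct, and it acts on $t=\beta_j/\alpha_j$ by $t\mapsto\frac{at+b}{ct+d}$. With these formulas, the comparison you leave as ``what makes the two M\"obius transforms agree'' is one line:
\[
\iota_\gamma^{-1}\Bigl(\frac{\beta_j}{\alpha_j}\Bigr)=\frac{a\beta_j+b\alpha_j}{c\beta_j+d\alpha_j}=\mu^{(j)}\Bigl(\frac{a\theta'+b}{c\theta'+d}\Bigr).
\]
The denominators cause no trouble. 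After inverting $f_0$, $f^\gamma_0$ and $\det\gamma$, the element $c\theta'+d$ is a unit, because its norm is $\det(\gamma)^n f_0/f^\gamma_0$.
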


\begin{proof} 
  To check the diagram commutes, observe that, expressed in terms of the
  standard bases, all maps are given by universal polynomials in
  $f_0,\dots, f_n$ and the coefficients of $\gamma$, so the commutativity of the
  diagram is equivalent to an identity of polynomials. Hence it suffices to
  check the diagram commutes in the case when $f_0$ and $(f^\gamma)_0$ are
  invertible. In this case, $R_f$ is generated by the element $\theta$, so it
  suffices to check $\theta$ is sent to the same element by both sides of the
  commutative diagram.

  Let $\gamma = \begin{psmallmatrix} a & b \\ c & d \end{psmallmatrix}$. The
  horizontal arrow sends $\theta$ to $\frac{\beta_j}{\alpha_j}$ and by definition
  $(\iota_\gamma)^{-1}$ sends $\beta_j$ to
  $(ad-bc)^{-1} (b \alpha_j + a \beta_j)$ and $\alpha_j$ to
  $(ad-bc)^{-1} ( d \alpha_j + c\beta_j)$ so $\iota_\gamma^{-1} $ sends
  $\frac{\beta_j}{\alpha_j}$ to
  $\frac{ b \alpha_j + a \beta_j}{d \alpha_j + c\beta_j}$. On the other hand, by definition, the
  vertical arrow $\psi_\gamma$ sends $\theta$ to
  $\frac{ a \theta+ b}{ c\theta+d}$. (The isomorphism $\psi_\gamma$ is constructed as
  $\sigma$ in \cite[Proof of Proposition 1.2]{Nakagawa89} by reduction to three
  different cases. However, all three cases are applicable when $f$ has Galois
  group $S_n$, in which case there is a unique field isomorphism
  $K_f \to K_{f^\gamma}$ which is given by the formula
  $\theta \mapsto \frac{ a \theta+ b}{ c\theta+d}$, so all three must be given by this formula, and
  thus the general case is given by this formula as well.) This is then sent by
  $\mu^{(j)}$ to
  $(a \frac{\beta_j}{\alpha_j} + b)/( c\frac{\beta_j}{\alpha_j}+d)$. Since these are equal, the
  diagram commutes. \end{proof}

We now define $\mu$ as the unique ring homomorphism
$R_f^{ \otimes n} \to \mathcal{S}(f;R) $ such that
$\mu(x_1 \otimes \dots \otimes x_n)=\mu^{(1)} (x_1) \cdots \mu^{(n)} (x_n)$ for
$x_1,\dots, x_n\in R_f$. It is immediate from the definition of the tensor
product of rings that $\mu$ exists, is unique, and is a ring homomorphism.

It also follows from Lemma \ref{lem-change-basis-cd} that the diagram

\begin{equation}\label{eq-change-basis-cd} \begin{tikzcd} R_f^{\otimes n} \arrow[d,
    "\psi_\gamma^{\otimes n} "] \arrow[r,"\mu"] & \mathcal{S}(f;R) \arrow[d,"\iota_\gamma^{-1}"] \\ R_{f^\gamma }^{\otimes n}
    \arrow[r,"\mu"] & \mathcal{S}(f^\gamma ;R)\end{tikzcd} \end{equation} commutes.

\begin{lemma}\label{monogenic-surjective} When $f_0$ is invertible, $\mu$ is
  surjective.\end{lemma}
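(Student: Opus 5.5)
When $f_0$ is invertible, I would work entirely in the description of $\mathcal{S}(f;R)$ given just before \Cref{monogenic-case}:
\[
\mathcal{S}(f;R) = R\bigl[\tfrac{\beta_1}{\alpha_1},\dots,\tfrac{\beta_n}{\alpha_n}\bigr]\Big/\Bigl((-1)^k\Sigma_k\bigl(\tfrac{\beta_1}{\alpha_1},\dots,\tfrac{\beta_n}{\alpha_n}\bigr)-\tfrac{f_k}{f_0}\Bigr)_{k\in\{1,\dots,n\}}.
\]
In this description, $\mathcal{S}(f;R)$ is generated as an $R$-algebra by the $n$ elements $\beta_j/\alpha_j$ for $j=1,\dots,n$.

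By \Cref{monogenic-case}, under the isomorphism $R_f\cong R[x]/f(x,1)$ the map $\mu^{(j)}$ is the ring homomorphism fixing $R$ and sending $\theta$ to $\beta_j/\alpha_j$. Hence the image of $\mu$ contains
\[
\mu(1\otimes\cdots\otimes\theta\otimes\cdots\otimes 1)=\mu^{(j)}(\theta)=\tfrac{\beta_j}{\alpha_j},
\]
where $\theta$ sits in the $j$-th slot and all other factors map to $1$. This works because each $\mu^{(i)}$ is a ring homomorphism, so $\mu^{(i)}(1)=1$.

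Since $\mu$ is a ring homomorphism, its image is a subring containing $R$, namely the images of $r\cdot 1\otimes\cdots\otimes 1$. It also contains every generator $\beta_j/\alpha_j$, so the image is all of $\mathcal{S}(f;R)$.

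The one point requiring care is justifying the identification of $\mathcal{S}(f;R)$ with the Bhargava--Satriano quotient when $f_0$ is a unit. In particular, one must check that inverting $a_0$ in $B_n(R)$ yields $R[\beta_i/\alpha_i,a_0^{\pm1}]$, and that the relation $a_0=f_0$ then kills the extra variable $a_0$. This was asserted in the text preceding \Cref{monogenic-case}, and I would take it as given. One could also argue directly, without that identification. A monomial $\prod\alpha_i^{d-e_i}\beta_i^{e_i}$ of degree $d$ equals $a_0^d\prod(\beta_i/\alpha_i)^{e_i}$, which becomes $f_0^d\prod(\beta_i/\alpha_i)^{e_i}$ in $\mathcal{S}(f;R)$. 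Such monomials span $\mathcal{S}(f;R)$, so again every element lies in the $R$-subalgebra generated by the $\beta_i/\alpha_i$, and hence in the image of $\mu$.
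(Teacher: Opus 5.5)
Your proposal is correct and is the same argument as the paper's: when $f_0$ is a unit, $\mathcal{S}(f;R)$ is generated as an $R$-algebra by the elements $\beta_j/\alpha_j$, and \Cref{monogenic-case} gives $\mu^{(j)}(\theta)=\beta_j/\alpha_j$, so every generator lies in the image of $\mu$. Your extra monomial computation, showing that a degree-$d$ monomial equals $f_0^{d}\prod_i(\beta_i/\alpha_i)^{e_i}$ in $\mathcal{S}(f;R)$, is a valid self-contained proof of the generation claim, which the paper instead takes from the discussion preceding \Cref{monogenic-case}.
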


\begin{proof} This follows from Lemma \ref{monogenic-case} since we saw that
  $\mathcal{S}(f;R)$ is generated by
  $\frac{\beta_1}{\alpha_1},\dots, \frac{\beta_n}{\alpha_n}$ in this case and
  $\mu^{(j)}$ sends $\theta$ to $\frac{\beta_j}{\alpha_j}$ so
  $\frac{\beta_j}{\alpha_j}$ is in the image of $\mu$.\end{proof}

\begin{lemma}\label{monogenic-kernel} When $f_0$ is invertible, the kernel of
  $\mu$ is the ideal $I(R_f,R)$. \end{lemma}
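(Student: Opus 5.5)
When $f_0$ is invertible, I would reduce to the monogenic case, where Bhargava--Satriano have already done the work. Since $f_0\in R^\times$, the ring $R_f$ is identified with $R[x]/(g(x))$, where $g(x)=f(x,1)/f_0$ is monic. This identification is an isomorphism of rank $n$ $R$-algebras. The ideal $I(R_f/R)$ is defined intrinsically from the algebra structure, through the characteristic polynomials $s_j(a)$. Hence $I(R_f/R)$ corresponds to $I(R_g/R)$ under the induced isomorphism $R_f^{\otimes n}\cong R_g^{\otimes n}$. On the target side, the discussion before Lemma~\ref{monogenic-case} identifies $\mathcal S(f;R)$ with $R[X_1,\dots,X_n]/((-1)^k\Sigma_k-f_k/f_0)_k$, where $X_j=\beta_j/\alpha_j$. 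This is exactly the Bhargava--Satriano presentation of $\SnC{R_g}{R}$.

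The first step is to show $I(R_f/R)\subseteq\ker\mu$. By Lemma~\ref{monogenic-case}, $\mu$ sends $\theta^{(j)}$ to $X_j$. I would then show that for every $a\in R_f$, the elements $\mu(a^{(1)}),\dots,\mu(a^{(n)})$ have elementary symmetric functions equal to $s_j(a)$. A clean way to do this is the universal argument. Write $a=p(\theta)$ and consider $\prod_j(T-p(X_j))$. Its coefficients are symmetric in the $X_j$, hence polynomials in the $\Sigma_k$, hence lie in $R$ modulo the relations. The claim is that these coefficients are the coefficients of the characteristic polynomial of multiplication by $a$.

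To verify the claim, I would base change to the universal ring $R[X_1,\dots,X_n]$ with $g=\prod(x-X_i)$. There $R_g\otimes R[X]\cong\prod_i R[X]$ via $\theta\mapsto(X_i)_i$, at least after inverting the discriminant, so the char poly of $p(\theta)$ is $\prod(T-p(X_i))$. Since both sides are polynomial identities, the discriminant can be removed by density. Alternatively, I would simply cite \cite[\S6]{BhargavaSatriano-2014}, which proves precisely that the kernel of $R_g^{\otimes n}\to R[X]/(\Sigma_k-(-1)^kg_k)$ is $I(R_g/R)$.

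For the reverse inclusion, I would use surjectivity and freeness. By Lemma~\ref{monogenic-surjective}, $\mu$ induces a surjection $\bar\mu\colon R_f^{\otimes n}/I(R_f/R)\to\mathcal S(f;R)$. The target is free of rank $n!$ with basis $T_n$, by Lemma~\ref{graded-basis}. Bhargava--Satriano show that the $S_n$-closure of a monogenic rank $n$ algebra is generated as an $R$-module by the $n!$ images of $\theta_1^{e_1}\cdots\theta_n^{e_n}$ with $0\le e_i<i$. This follows by repeatedly using the relations $s_j(\theta)$ to reduce exponents, exactly as in the monogenic basis argument. A surjective $R$-module map from a module generated by $n!$ elements onto a free module of rank $n!$ is an isomorphism. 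Indeed, the composite $R^{n!}\to\SnC{R_f}{R}\to R^{n!}$ is a surjective endomorphism of a finitely generated module, hence bijective. Therefore $\bar\mu$ is injective, and $\ker\mu=I(R_f/R)$.

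I expect the main obstacle to be the first step: checking that $\mu$ kills the generators $s_j(a)-\sum a^{(i_1)}\cdots a^{(i_j)}$ for every $a$, not just for $a=\theta$. The ideal is not obviously generated by the $\theta$-relations alone. I would handle this via the splitting/universal-identity argument above, or by appealing directly to the corresponding monogenic statement in Bhargava--Satriano.
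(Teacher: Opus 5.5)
Your proof is correct, but it handles the two inclusions in the opposite order from the paper.

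\textbf{The paper's route.} The paper gets $\ker\mu\subseteq I(R_f/R)$ for free. Since $R[x_1,\dots,x_n]\to R_f^{\otimes n}$ is surjective, and the composite to $\mathcal S(f;R)=R[X_1,\dots,X_n]/((-1)^k\Sigma_k-f_k/f_0)_k$ is just the quotient map $x_i\mapsto X_i$, the kernel of $\mu$ is generated by the relations for the single element $\theta$. These lie in $I(R_f/R)$ by definition. The paper then gets equality by citing \cite[Theorem 3]{BhargavaSatriano-2014}, which says $R_f^{\otimes n}/I(R_f/R)$ is free of rank $n!$, and comparing it with the free rank-$n!$ module $\mathcal S(f;R)$.

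\textbf{Your route.} You prove the harder-looking inclusion $I(R_f/R)\subseteq\ker\mu$ through the universal identity $\operatorname{charpoly}(p(\theta))=\prod_i(T-p(X_i))$. This identity holds after inverting the nonzerodivisor $\prod_{i<j}(X_i-X_j)$ and then specializing from the generic monic polynomial. After that you only need $R_f^{\otimes n}/I(R_f/R)$ to be \emph{generated} by $n!$ elements. That part is elementary: it uses only the $\theta$-relations and the classical basis of $R[x_1,\dots,x_n]$ over the symmetric polynomials. Your argument that a surjection from an $n!$-generated module onto $R^{n!}$ is an isomorphism is also fine, since surjective endomorphisms of $R^{n!}$ are bijective.

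\textbf{Comparison.} The paper's proof is shorter but hands the real content to Bhargava--Satriano: the freeness of rank $n!$, which in effect says $I$ is generated by the $\theta$-relations. Your proof is self-contained and essentially reproves that monogenic result.

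Your Step~1, combined with the paper's one-line observation that $\ker\mu$ is generated by the $\theta$-relations, already gives $\ker\mu=I(R_f/R)$ with no rank-counting at all.
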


\begin{proof} The surjection $R[x] \to R[x]/f(x,1) \cong R_f$ gives a surjection
  $R[x_1,\dots,x_n] \to R_f^{\otimes n}$.

  By Lemma \ref{monogenic-case}, the composition
  $R[x_1,\dots,x_n] \to R_f^{\otimes n} \stackrel\mu\to \mathcal S(f;R) $ is the map
  \[R[x_1,\dots, x_n] \to \left. R\left[ \tfrac{\beta_1}{\alpha_1},\dots
        ,\tfrac{\beta_n}{\alpha_n}\right] \middle/ \left( (-1)^k \Sigma_k \left(
          \tfrac{\beta_1}{\alpha_1},\dots, \tfrac{\beta_n}{\alpha_n}\right) - \tfrac{f_k}{f_0}
      \right)_{k \in \{1,\dots,n\} }\right)\] sending $x_i$ to
  $\frac{\beta_i}{\alpha_i}$, so the kernel of the composition is
  $ \left( (-1)^k \Sigma_k \left( x_i ,\dots, x_n \right) - \tfrac{f_k}{f_0}
  \right)_{k \in \{1,\dots,n\} } $. Thus the kernel of $\mu$ is generated by the
  images of the classes
  $(-1)^k \Sigma_k \left( x_i ,\dots, x_n \right) - \tfrac{f_k}{f_0}$ in
  $R_f^{\otimes n}$. Since the image of $x_i$ is $x^{(i)}$, the image of the class
  $(-1)^k \Sigma_k \left( x_i ,\dots, x_n \right) - \tfrac{f_k}{f_0}$ is, up to a
  sign, equal to
  $s_j(a)-\sum_{1\leq i_1<\cdots<i_j\leq n}a^{(i_1)}\cdots a^{(i_j)}$ where
  $j=k$ and $a=x$ and thus lies in $I(R_f/R)$ by definition.

  Hence the kernel of $\mu$ is contained in $I(R_f/R)$. The quotient of
  $R_f^{\otimes n}$ by the kernel of $\mu$ is $\mathcal S(f;R)$ by Lemma
  \ref{monogenic-surjective}, and we know that $\mathcal S(f;R)$ is free of
  rank $n!$. On the other hand, the quotient of $R_f^{\otimes n}$ by $I(R_f/R)$ is
  free of rank $n!$ by \cite[Theorem 3]{BhargavaSatriano-2014}. Hence these
  ideals are equal, since it is not possible for a free module of rank $n!$ to
  be the quotient of a free module of rank $n!$ by a nonzero
  submodule. \end{proof}

\begin{lemma}\label{lem:sn-closure-is-bn} Let $f$ be a primitive element of $V_n(R)$.
  Then the map $\mu \colon R_f^{\otimes n} \to \mathcal{S}(f;R) $ is surjective with kernel
  $I(R_f/R)$, thereby inducing an isomorphism
  $\SnC{R_f}{R}\stackrel\sim\to \mathcal{S}(f;R)$. \end{lemma}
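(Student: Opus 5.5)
The plan is to reduce to the case where $f_0$ is invertible, which is handled by Lemmas~\ref{monogenic-surjective} and~\ref{monogenic-kernel}. There are two reductions: a localization on $\Spec R$, and a change of variables by $\GL_2$ using the commutative diagram~\eqref{eq-change-basis-cd}.

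\emph{Step 1: globalizing.} Both conditions are local on $\Spec R$. Surjectivity of an $R$-module map and equality of two submodules of $R_f^{\otimes n}$ can each be checked after localizing at every prime $\mathfrak p$ of $R$. The formation of $R_f$, $R_f^{\otimes n}$, $I(R_f/R)$, $\mathcal S(f;R)$ and $\mu$ commutes with base change $R\to R_{\mathfrak p}$. For $I$ this is because $s_j$ and the generators are compatible with base change; for $\mathcal S(f;R)$, which is free with basis $T_n$, it follows from the definition as a quotient. Since $\ker\mu\otimes R_{\mathfrak p}=\ker(\mu\otimes R_{\mathfrak p})$ by flatness of localization, it suffices to prove the statement for $R$ local. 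Equality of ideals $\ker\mu = I(R_f/R)$ then holds because both inclusions can be tested locally.

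\emph{Step 2: making the leading coefficient a unit.} Let $R$ be local with maximal ideal $\mathfrak m$ and residue field $k$. Since $f$ is primitive, its reduction $\bar f\in V_n(k)$ is nonzero. If $k$ is infinite, we choose $\gamma\in\GL_2(R)$ (lifting some element of $\GL_2(k)$) with $\bar f^{\gamma}(1,0)\neq 0$. Then $(f^\gamma)_0=f(a,c)$ is a unit, because a nonzero form does not vanish on all of $\PP^1(k)$.

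When $k$ is finite this may fail, for example for $xy(x-y)\cdots$ over $\mathbf F_2$. I expect this to be the main obstacle. The plan is to pass first to a faithfully flat extension $R\to R'$ with infinite residue field. A standard choice is $R'=R[t]_{\mathfrak m R[t]}$, which is local with residue field $k(t)$. Surjectivity and equality of ideals descend along faithfully flat maps, again because all constructions commute with base change.

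Over $R'$, fix $\gamma$ with $(f^\gamma)_0\in R'^\times$. By Lemmas~\ref{monogenic-surjective} and~\ref{monogenic-kernel}, the map $\mu\colon R_{f^\gamma}^{\otimes n}\to\mathcal S(f^\gamma;R')$ is surjective with kernel $I(R_{f^\gamma}/R')$.

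\emph{Step 3: transporting back.} In the diagram~\eqref{eq-change-basis-cd}, the vertical maps $\psi_\gamma^{\otimes n}$ and $\iota_\gamma^{-1}$ are ring isomorphisms. Hence $\mu$ for $f$ is surjective with kernel $(\psi_\gamma^{\otimes n})^{-1}(I(R_{f^\gamma}/R'))$.

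It remains to see that $\psi_\gamma^{\otimes n}$ carries $I(R_f/R')$ onto $I(R_{f^\gamma}/R')$. This holds because $\psi_\gamma$ is an $R'$-algebra isomorphism: it preserves characteristic polynomials of multiplication maps, so $s_j(\psi_\gamma(a))=s_j(a)$, and $\psi_\gamma^{\otimes n}(a^{(i)})=\psi_\gamma(a)^{(i)}$. So the generators of one ideal map bijectively to the generators of the other.

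Combining Steps 1--3 shows that $\mu$ induces an isomorphism $\SnC{R_f}{R}\to\mathcal S(f;R)$.
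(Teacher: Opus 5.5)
Your proof is correct and follows the paper's argument: localize at primes, pass to a faithfully flat extension with large residue field, choose $\gamma$ so that $(f^\gamma)_0$ is a unit, and transport the monogenic case (Lemmas~\ref{monogenic-surjective} and~\ref{monogenic-kernel}) along the diagram~\eqref{eq-change-basis-cd}. The differences are minor: you use $R[t]_{\mathfrak m R[t]}$ where the paper uses a finite \'etale extension with residue field of size $>n$, and the paper checks surjectivity on residue fields via Nakayama rather than by localizing.

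One step is justified too briefly. The claim $I(R_f\otimes_R R'/R') = I(R_f/R)\,R'$ is not immediate from ``the generators are compatible with base change'' when $R'$ is not a localization, because the generators of the left-hand ideal are indexed by all elements of $R_f\otimes_R R'$. It does hold by Bhargava--Satriano's theorem that the $S_n$-closure respects base change, which the paper's finite \'etale step also uses implicitly.
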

\begin{proof}

  Since $\mu$ is a map of finitely generated $R$-modules, to check $\mu$ is
  surjective, it is enough that $\mu$ is surjective after base changing to
  $\kappa$ for all residue fields $\kappa$ of $R$. To check this, we may further base
  change to any field extension $\kappa'$ of $\kappa$. We can always choose
  $\kappa'$ so that $|\kappa'|>n$, which ensures that $f(x,y)$ does represent a nonzero
  (hence unit) element in $\kappa'$. (Here we are using that $f$ is primitive.) Then
  we can choose $\gamma$ so that $f^\gamma $ has leading coefficient a unit. For this
  $\gamma$, in the commutative diagram \eqref{eq-change-basis-cd}, the columns are
  isomorphisms and by Lemma \ref{monogenic-surjective} the bottom arrow is
  surjective, so the top arrow is surjective, giving the desired surjectivity
  of $\mu$.

  To check that the kernel of $\mu$ is $I(R_f/R)$, we need a slightly more
  complicated version of the same argument. To verify that two ideals are
  equal, it is enough to check that they are equal after base changing to the
  localization at every prime ideal. To check this, we may further base change
  to any finite \'{e}tale algebra over this local ring. We can choose this
  finite \'{e}tale extension so that its residue field has more than $n$
  elements, which ensures that $f(x,y)$ represents a nonzero element in the
  residue field and hence a unit in the ring. We again choose $\gamma$ so that
  $f^\gamma $ has leading coefficient a unit. For this $\gamma$, in the commutative
  diagram \eqref{eq-change-basis-cd}, the columns are isomorphisms, so the
  column isomorphism sends the kernel of the top arrow to the kernel of the
  bottom arrow, which by Lemma \ref{monogenic-case} is $I(R_{f^\gamma}/R)$. The
  definition of $I(R_f/R)$ is manifestly also compatible with the isomorphism
  $R_{f^\gamma}\to R_f$, so we get that the kernel of $\mu$ is $I(R_f/R)$. \end{proof}

\begin{remark} The construction of $\mathcal{S}(f;R)$ has the following geometric
  interpretation.

  We first state a general fact in algebraic geometry: Let $X$ a scheme over
  $R$, $g \colon X \to \PP^n_R$ a finite morphism, $x_0,\dots, x_n$ the standard
  basis for $H^0( \PP^n, \OO(1))$, and $[y_0:\dots: y_n]$ a point of
  $\PP^n(R)$. Then,
  \[\Spec \left[ \bigoplus_{d=0}^\infty H^0(X, g^* \OO_{\mathbb P^n}(d)) /
      (x_0-y_0,\dots, x_n-y_n)\right] \] is the fiber product of the following
  diagram

\begin{equation*}
    \begin{tikzcd}
                & & X \arrow[d, "g"] \\
        \Spec R \arrow[rr, "{[y_0:\dots :y_n]}"'] & & \PP^n \, .
    \end{tikzcd}
\end{equation*}

Taking 
\begin{itemize}
\item $X = (\PP^1)^n_R$ where the $i$th copy of $\PP^1$ has coordinates
  $(\beta_i,\alpha_i)$,
\item $g \colon X \to \PP^n$ the map sending
  $([\beta_1:\alpha_1],\dots,[\beta_n,\alpha_n])$ to $[a_0,\dots,a_n]$, and
\item $[y_0,\dots,y_n]=[f_0,\dots,f_n]$,
\end{itemize}
we see that $\Spec \mathcal{S}(f;R)$ is the inverse image of $[f_0,\dots,f_n]$ under
$g$. Indeed, $g^* \OO_{\PP^n} (1) = O_{ (\PP^1)^n}(1,\dots, 1)$ so that
$ \bigoplus_{d=0}^\infty H^0(X, g^* \OO_{\PP^n}(d))= B_n(R)$ and then
$(x_0-y_0,\dots, x_n-y_n) = (a_0-f_0,\dots,a_n-f_n)$.

So $\Spec \mathcal{S}(f;R)$ is a closed subscheme of $(\mathbb P^1)^n_R$. Classically,
$\Spec R_f$ is the closed subscheme of $\PP^1_R$ defined by the binary form
$f$. The ring homomorphism $\mu^{(j)} \colon R_f\to \mathcal{S}(f;R)$ corresponds to a map
$\Spec \mathcal{S}(f;R)\to \Spec R_f$ arising from the projection onto the
$j$th copy of $\PP^1$.

It is likely possible to prove Lemma \ref{lem:sn-closure-is-bn} from this
geometric description, but we have chosen to express the proof in terms of
elementary algebra instead.\end{remark}

We end this section with a determination of the discriminant polynomial of the
$S_n$-closure of $R_f$.
\begin{lemma}\label{disc-of-closure}The discriminant of the $S_n$-closure
  $\mathcal{S}(f;R) \cong\SnC{R_f}{R}$ of $R_f$ with respect to the basis $T_n$ is
  $\Disc(f)^{n!/2}$. \end{lemma}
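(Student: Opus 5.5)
The plan is to treat the discriminant of $\mathcal{S}(f;R)$ in the basis $T_n$ as a universal polynomial and identify it by specialization. The trace form of $\mathcal{S}(f;R)$ in the basis $T_n$ has entries that are universal polynomials in $f_0,\dots,f_n$ with integer coefficients. Indeed, $\mathcal{S}(f;R)$ is the base change of the universal algebra $B_n/(\text{homog.})$ along $\ZZ[a_0,\dots,a_n]\to R$, $a_i\mapsto f_i$, and $T_n$ is a basis over $\ZZ[a_0,\dots,a_n]$ by Lemma \ref{graded-basis}. Hence $\Disc_{T_n}$ is a polynomial $D(f_0,\dots,f_n)\in\ZZ[f_0,\dots,f_n]$. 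Concretely, consider the free graded $\ZZ[a_0,\dots,a_n]$-algebra $B_n$ with basis $T_n$. For $f$ with unit coefficients, the trace of multiplication on $\mathcal{S}(f;R)$ is obtained from a degree-preserving trace map on $B_n$ after setting $a_i=f_i$. It therefore suffices to prove the polynomial identity $D=\Disc^{n!/2}$ in $\ZZ[f_0,\dots,f_n]$, and for that it suffices to check it over $\CC$ on a Zariski dense set, for instance forms with $f_0\neq 0$ and distinct roots.

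\textbf{Step 1: homogeneity.} Compare degrees. Giving $a_i$ degree $1$, the trace form entry $\operatorname{tr}(t t')$ for $t,t'\in T_n$ of degrees $d,d'$ is homogeneous of degree $d+d'$ in the $a_i$: the multiplication map is graded, and the trace of a degree-$e$ element lands in degree $e$ of $\ZZ[a]$. So $D$ is homogeneous of degree $2\sum_{t\in T_n}\deg t=(n-1)n!$ by Corollary \ref{euler-identities}\ref{euler-identities-2}. This matches $\deg \Disc^{n!/2}=(2n-2)n!/2$. This homogeneity lets us use the dense set described above.

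\textbf{Step 2: the monogenic case.} For $f$ with $f_0$ a unit, we have $\mathcal{S}(f;R)\cong \mathcal{S}(f/f_0;R)$ via the description as Bhargava--Satriano's ring in the variables $\beta_i/\alpha_i$. The basis $T_n$ maps to $f_0^{\deg t}$ times a basis of monomials in the $X_i=\beta_i/\alpha_i$. I would show that this basis differs from the Bhargava--Satriano basis $\{\prod X_i^{e_i}, e_i<i\}$ by a unimodular change of basis over $\ZZ[f_1/f_0,\dots]$. Alternatively I would avoid this by directly computing over $\CC$: when $f$ has distinct roots $P_1,\dots,P_n$, $\mathcal{S}(f;\CC)\cong\CC^{n!}$, with coordinates given by evaluating at the $n!$ orderings $(P_{\sigma(1)},\dots,P_{\sigma(n)})$ of the roots, using chosen representatives $(\beta_i,\alpha_i)$ with $\prod\alpha_i=f_0$. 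The discriminant then equals $\det(M)^2$, where $M$ is the $n!\times n!$ matrix $\bigl(t(P_\sigma)\bigr)_{\sigma,t}$. So the claim reduces to $\det M=\pm\prod_{i<j}(\alpha_i\beta_j-\beta_i\alpha_j)^{n!/2}$.

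\textbf{Step 3: evaluating $\det M$, the main obstacle.} I would view $\det M$ as a polynomial in the root coordinates $(\alpha_i,\beta_i)$, multihomogeneous of degree $\sum_t\deg t=(n-1)n!/2$ in each pair. It vanishes whenever $P_i=P_j$, since then rows $\sigma$ and $(ij)\sigma$ coincide. It is alternating under transpositions of the roots, which permute rows. Hence it is divisible by each $(\alpha_i\beta_j-\beta_i\alpha_j)$, and the exponent of divisibility should be $n!/2$. To see this, note that rows pair up into $n!/2$ pairs $\{\sigma,(ij)\sigma\}$, and subtracting within each pair gives a factor $(\alpha_i\beta_j-\beta_i\alpha_j)$ per pair. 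A degree count then gives $\det M=c\prod_{i<j}(\alpha_i\beta_j-\beta_i\alpha_j)^{n!/2}$. The total degree of the product in the $i$-th pair is $(n-1)n!/2$, which agrees with Step 1. It remains to show $c=\pm1$. I expect this constant to be the delicate part. I would try to get it by specializing to the monic case with $\alpha_i=1$ and comparing with Bhargava--Satriano's discriminant formula \cite[(16)]{BhargavaSatriano-2014}. Failing that, I would argue integrally: $D$ and $\Disc^{n!/2}$ have the same vanishing over every $\overline{\FF}_p$, because $\mathcal{S}(f;\FF_p)$ is étale exactly when $f$ is separable, so $c$ is not divisible by any prime.
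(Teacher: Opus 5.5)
Your proposal is correct, but it takes a different route from the paper.

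\textbf{The paper's route.} The paper does no computation. It regards the discriminant in the basis $T_n$ as a universal polynomial $\Disc_G(f)$ and evaluates it on integral forms representing $1$, which are monic after a $\GL_2(\ZZ)$ change of variables. There it invokes Bhargava--Satriano's monogenic formula \cite[(16)]{BhargavaSatriano-2014}, together with the fact that over $\ZZ$ the discriminant does not depend on the chosen basis. Zariski density of such forms then finishes the proof.

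\textbf{Your route.} You compute $\det M$ in root coordinates instead. Write $\Delta_{ij}\colonequals\alpha_i\beta_j-\beta_i\alpha_j$. Subtracting paired rows $\sigma,(ij)\sigma$ legitimately produces $n!/2$ rows divisible by $\Delta_{ij}$. The multidegree count via \Cref{euler-identities}\ref{euler-identities-2} then forces $\det M=c\prod_{i<j}\Delta_{ij}^{n!/2}$, hence $D=c^2\Disc^{n!/2}$.

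\textbf{Fixing the constant.} Your first option is essentially the paper's input, and it is simpler than you fear. Since $D=c^2\Disc^{n!/2}$ is already established, a single monic $f\in\ZZ[x]$ with $\Disc(f)\neq0$ suffices: $\mathcal S(f;\ZZ)$ is then literally the Bhargava--Satriano ring, and its discriminant over $\ZZ$ is basis-independent. So you never need to compare $T_n$ with their basis over $\ZZ[f_1/f_0,\dots]$.

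Your second option makes the proof self-contained. It needs only \Cref{graded-basis} and no input from Bhargava--Satriano, and in effect reproves their formula in the monogenic case. To make it rigorous you should record two routine facts:
\begin{enumerate}[(a)]
\item $c\in\ZZ$. This follows from Gauss's lemma, since $\det M\in\ZZ[\alpha,\beta]$ and $\prod_{i<j}\Delta_{ij}$ is primitive.
\item $\mathcal S(f;k)$ is étale for separable $f$ over an algebraically closed field $k$ of any characteristic. This follows from the same point count you use over $\CC$: the algebra has dimension $n!$ and has $n!$ distinct $k$-points, given by the orderings of the roots.
\end{enumerate}

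\textbf{Two harmless slips.} First, $\det M$ is not alternating for $n\geq4$: the row permutation $\sigma\mapsto(ij)\sigma$ has sign $(-1)^{n!/2}$. You only use vanishing on $\{\Delta_{ij}=0\}$, so nothing breaks. Second, Step~1 is superfluous, because the multidegree count in Step~3 already does the work.
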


\begin{proof} 
  The discriminant must be a polynomial in the coefficients of $f$. Denote this
  polynomial by $\Disc_G(f)$. Suppose $f$ is a polynomial over $\ZZ$ with
  squarefree discriminant that represents $ 1$. Then $R_f$ is a monogenic ring
  so we may apply \cite[(16)]{BhargavaSatriano-2014} to deduce that the
  $S_n$-closure of $R_f$ has discriminant $\Disc(f)^{n!/2}$. (While the
  discriminant of an arbitrary ring extension depends on a choice of basis, and
  our basis may be different from Bhargava and Satriano's, when the base ring
  is $\ZZ$ the discriminant is independent of the choice of basis.) The set of
  polynomials in $V_n(\ZZ)$ with squarefree discriminant that represent $1$ is
  Zariski dense (in $V_{n,\QQ}$) and so we have $\Disc(f)^{n!/2}= \Disc_G(f)$
  as polynomials in the coefficients of $f$. In particular, they are equal for
  $f$ defined over an arbitrary ring. \end{proof}

\section{Successive minima}
\label{sec:successive-minima}

In this section, we derive estimates for the successive minima of the
$S_n$-closures of $R_f$, for $f\in V_n(\ZZ)$. Throughout this section,
$f=[f_0,\dots,f_n]\in V_n(\ZZ)$ will denote a primitive irreducible binary
$n$-ic form. In particular, $f_0\neq0$. In \Cref{lem:sn-closure-is-bn}, we have
constructed an isomorphism
\[
    \mathcal S(f;\ZZ) = B_n(\ZZ)/(a_0-f_0,\dots,a_n-f_n) \stackrel\sim\to \SnC{R_f}{\ZZ}.
\]
Let 
\[
    \tau_f : B_n(\ZZ) \twoheadrightarrow \SnC{R_f}{\ZZ}
\]
be the resulting map.

We have the following commutative diagram of $S_n$-equivariant ring
homomorphisms, where the vertical isomorphisms come from
\Cref{lem:sn-closure-is-bn} and the horizontal injections in the bottom row are
induced by the inclusions $R_f \hookrightarrow \OO_{K_f} \hookrightarrow K_f$:
\[
\begin{tikzcd}
\mathcal S(f;\ZZ) \dar{\sim} \ar[hook]{rr} & & \mathcal S(f;\QQ) \dar{\sim} \\
\SnC{R_f}{\ZZ} \rar[hook] & \SnC{\OO_{K_f}}{\ZZ} \rar[hook] & \SnC{K_f}{\QQ}
\end{tikzcd}
\]

For any degree $n$ extension $K/\QQ$, we define an \cdef{absolute value} on
$\SnC{K}{\QQ}$ by
\[
    \|r\| \colonequals \max_{\sigma} {\lvert\sigma(r)\rvert},
\]
where $\sigma$ runs over all $n!$ $\QQ$-algebra homomorphisms $\SnC{K}{\QQ}\to\CC$.

For any $d\in\{0,\dots,n-1\}$, we let $\hatKfm{f}{d}\subseteq\hatKf{f}$ be the
$\QQ$-vector space spanned by the images in $\hatKf{f}$ of the monomials in
$B_n(\ZZ)$ of degree at most $d$, so that we have a filtration
\[
\QQ = \hatKfm{f}{0} \subseteq \hatKfm{f}{1} \subseteq \dots \subseteq \hatKfm{f}{n-1} = \hatKf{f}.
\]

For ``generic'' $f$ this filtration roughly corresponds to a filtration by
successive minima: $\SnC{\OO_{K_f}}{\ZZ}\cap\hatKfm{f}{d}$ is generated by
elements of absolute value $\ll H(f)^d$ and contains all elements of
$\SnC{\OO_{K_f}}{\ZZ}$ of absolute value $\ll H(f)^d$. (See
\Cref{monomial-length-upper-bound,lem:other-generators-long}.)

\begin{lemma}\label{sn-invariants} Let $S_n$ act on $B_n(\mathbf Z)$ by
  permuting the variables $\alpha_1,\dots,\alpha_n$ and simultaneously permuting the
  variables $\beta_1,\dots,\beta_n$. We have
  $B_n(\mathbf Z)^{S_n} = \mathbf Z[a_0,\dots,a_n]$. \end{lemma}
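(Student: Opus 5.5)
The inclusion $\ZZ[a_0,\dots,a_n]\subseteq B_n(\ZZ)^{S_n}$ is immediate: each $a_k$ is a symmetric expression in the pairs $(\alpha_i,\beta_i)$, so it is fixed by the diagonal permutation action. For the reverse inclusion I would use the free basis $T_n$ from \Cref{graded-basis}. I would prove the equality degree by degree, and in each degree $e$ compare both sides as finitely generated free $\ZZ$-modules. The plan is to show first that the two sides agree after tensoring with $\QQ$, and then to handle saturation over $\ZZ$.

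\emph{Rational step.} The degree $e$ part of $B_n(\QQ)$ is
\[
\bigotimes_{i=1}^n \Sym^e\langle\alpha_i,\beta_i\rangle \cong (\Sym^e W)^{\otimes n},
\]
where $W=\QQ^2$. Its $S_n$-invariants are $\Sym^n(\Sym^e W)$, which is spanned by symmetrized pure tensors $h^{\otimes n}$ (polarization, valid in characteristic zero). Choose a basis of $\Sym^e W$ consisting of $e$-th powers of linear forms $\ell_j=u_j\alpha+v_j\beta$. The symmetric powers $\Sym^n(\Sym^e W)$ are then spanned by the products $\prod_{i=1}^n \ell_{j}(\alpha_i,\beta_i)^e$ and their symmetrizations. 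A product $\prod_i(u\alpha_i+v\beta_i)$ is the value of the form $\prod_i(\alpha_i x-\beta_i y)$ at $(u,-v)$, hence a $\QQ$-linear combination of the $a_k$. Spelling this out requires a little care: polarize again, so that $\Sym^n(\Sym^e W)$ is spanned by $n$-th powers in $\Sym^e W$, and write each such power $\sum_j c_j\ell_j^e$ as a combination of terms $\prod_i \ell_{j_i}(\alpha_i,\beta_i)$ symmetrized. This reduces the rational step to showing that symmetrizations of $\prod_i \lambda_{i}(\alpha_i,\beta_i)$, for linear forms $\lambda_i$, lie in $\QQ[a]$. That is the classical statement that multisymmetric functions of pairs in degree $(1,\dots,1)$ are generated by the coefficients of $\prod(\alpha_i x-\beta_iy)$. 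Alternatively, I could avoid this computation entirely with a dimension count. $\mathcal S(f;\QQ)$ for generic $f$ is the regular $S_n$-representation, because $\Spec$ of it is an $S_n$-torsor over the generic point. Hence $(B_n\otimes\QQ)$ is a graded free $\QQ[a]$-module whose fibre is the regular representation, so its invariants form a free module of rank $1$, generated by the degree-$0$ element $1$. I would take this second route as the cleaner argument.

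\emph{Integral step.} Suppose $x\in B_n(\ZZ)^{S_n}$. By the rational step, $Nx\in\ZZ[a]$ for some $N\ge1$. Write $x=\sum_{t\in T_n} c_t t$ with $c_t\in\ZZ[a]$, using \Cref{graded-basis}. Since $1\in T_n$ and the expansion of $Nx$ in this basis is unique, $Nc_t=0$ for $t\neq1$. As $\ZZ[a]$ is torsion-free, this forces $c_t=0$ for $t\neq1$, so $x=c_1\in\ZZ[a]$.

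\emph{Main obstacle.} The delicate point is justifying that $(B_n\otimes\QQ)^{S_n}$ has $\QQ[a]$-rank exactly $1$. I would argue this by localizing at the generic point of $\Spec\QQ[a]$. There, $B_n\otimes \mathrm{Frac}$ is the $S_n$-closure of a separable degree $n$ field extension with Galois group $S_n$, which is $\mathcal S$ of a generic form. Bhargava–Satriano show that it is a Galois $S_n$-algebra of rank $n!$, so its invariants are just the base field. It then remains to pass back from the generic point to $\QQ[a]$ itself: an invariant of $B_n\otimes\QQ$ lies in $\mathrm{Frac}(\QQ[a])\cap(\QQ[a]\text{-span of }T_n)$, and by freeness this intersection is $\QQ[a]$.
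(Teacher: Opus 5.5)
Your chosen (second) route is correct, but it identifies the invariants generically in a different way from the paper.

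\textbf{The paper's route.} The paper never leaves $\ZZ$. It inverts only $a_0$, notes that $B_n(\ZZ)[a_0^{-1}]$ is the polynomial ring in $\beta_1/\alpha_1,\dots,\beta_n/\alpha_n$ over $\ZZ[a_0^{\pm1}]$, and applies the fundamental theorem of symmetric polynomials to get $B_n(\ZZ)^{S_n}\subseteq\ZZ[a_0^{\pm1},a_1,\dots,a_n]$. It then concludes because $B_n(\ZZ)^{S_n}/\ZZ[a_0,\dots,a_n]$ is $a_0$-power torsion inside the free module $B_n(\ZZ)/\ZZ[a_0,\dots,a_n]$.

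\textbf{Your route.} You go to the generic point $F=\QQ(a_0,\dots,a_n)$ and identify $B_n\otimes F$ with $\mathcal S(f_{\mathrm{gen}};F)\cong\SnC{K}{F}$ via \Cref{lem:sn-closure-is-bn}. You then use that the generic form has Galois group $S_n$, together with Bhargava--Satriano's identification of the $S_n$-closure of an $S_n$-extension of fields with its Galois closure. Your two saturation steps (first clearing denominators from $\mathrm{Frac}(\QQ[a_0,\dots,a_n])$, then the integer $N$) use $1\in T_n$ and \Cref{graded-basis}. This is the same torsion-freeness mechanism the paper applies once, with $a_0$-powers.

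\textbf{Comparison.} The paper's argument is shorter and self-contained: it needs only \Cref{graded-basis} and classical symmetric functions, and it handles the rational and integral statements in one step. Yours imports an external theorem and \Cref{lem:sn-closure-is-bn}. It also needs one check you left implicit: that $\mu$ intertwines the index-permuting action on $\mathcal S(f;F)$ with Bhargava--Satriano's tensor-factor action. This is the identity $\sigma\circ\mu^{(j)}=\mu^{(\sigma(j))}$, which is immediate from \eqref{eq:muj-def}.

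\textbf{Discard the polarization sketch.} The reduction from the degree-$e$ terms $\prod_i\ell_{j_i}(\alpha_i,\beta_i)^e$ to symmetrized products of linear forms is not valid. Generation of the degree-$e$ invariants by degree-one ones is exactly what is at stake. If you want an elementary rational (even integral) step, just count. $S_n$ permutes the degree-$e$ monomials of $B_n$, so the degree-$e$ invariants are free on orbit sums. There are $\binom{n+e}{n}$ such orbits, which equals the rank of the degree-$e$ part of $\ZZ[a_0,\dots,a_n]$. Torsion-freeness of $B_n/\ZZ[a_0,\dots,a_n]$ then finishes the proof.
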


\begin{proof} Certainly $a_0,\dots,a_n$ are $S_n$-invariant so we have
  $\mathbf Z[a_0,\dots,a_n] \subseteq B_n(\mathbf Z)^{S_n}$. By Lemma
  \ref{graded-basis}, $ B_n(\mathbf Z)$ is free as a
  $\mathbf Z[a_0,\dots,a_n]$-module with a basis including $1$, so
  $B_n(\mathbf Z)/\mathbf Z[a_0,\dots,a_n]$ is free as a
  $\mathbf Z[a_0,\dots,a_n]$-module. Then
  $B_n(\mathbf Z)^{S_n} / \mathbf Z[a_0,\dots,a_n] $ is a submodule.

  After inverting $a_0$, $B_n(\mathbf Z)[a_0^{-1}]$ is simply the polynomial
  ring on generators $\frac{\beta_1}{\alpha_1},\dots, \frac{\beta_n}{\alpha_n}$ over the base
  ring $\mathbf Z [a_0,a_0^{-1}]$. By the fundamental theorem of symmetric
  polynomials, the $S_n$-invariants of this ring are generated by the
  elementary symmetric polynomials in
  $\frac{\beta_1}{\alpha_1},\dots, \frac{\beta_n}{\alpha_n}$, which are the elements
  $\frac{a_1}{a_0}, \dots, \frac{a_n}{a_0}$ of
  $\mathbf Z[a_0,\dots,a_n,a_0^{-1}]$.

  So
  \[B_n(\mathbf Z)^{S_n} \subseteq ( B_n(\mathbf Z)[a_0^{-1}])^{S_n} = \mathbf
    Z[a_0,\dots,a_n,a_0^{-1}] \] so
  $B_n(\mathbf Z)^{S_n} /\mathbf Z[a_0,\dots,a_n]$ is $a_0$-power torsion, but
  as a submodule of a free module
  $B_n(\mathbf Z)^{S_n} /\mathbf Z[a_0,\dots,a_n]$ is torsion free, and hence
  it vanishes, as desired.\end{proof}

\begin{lemma}\label{monomial-length-upper-bound}
  There is a constant $C>0$ such that for all monomials
  $m = \prod_{i=1}^n \alpha_i^{e_i} \beta_i^{d-e_i}$ in $B_n(\ZZ)$ of
  degree~$d$ and for all primitive $f\in V_n(\ZZ)$, we have
  $\|\tau_f(m)\| \leq C\cdot\Ht(f)^d$.
\end{lemma}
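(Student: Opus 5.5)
The plan is to compute $\sigma(\tau_f(m))$ explicitly for each of the $n!$ embeddings $\sigma\colon\SnC{K_f}{\QQ}\to\CC$, and then bound each factor by the size of the roots of $f$.

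Since $f$ is irreducible and primitive, $f_0\neq0$. Over $\CC$ we factor
\[
f(x,y)=\prod_{i=1}^n(\alpha_i^\circ x-\beta_i^\circ y).
\]
A $\QQ$-algebra homomorphism $\mathcal S(f;\QQ)=B_n(\QQ)/(a_k-f_k)\to\CC$ is a ring map $B_n(\QQ)\to\CC$ with $a_k\mapsto f_k$. For each permutation $\pi\in S_n$ I take the map sending a degree-$d$ monomial $\prod\alpha_i^{e_i}\beta_i^{d-e_i}$ to $\prod(\alpha^\circ_{\pi(i)})^{e_i}(\beta^\circ_{\pi(i)})^{d-e_i}$. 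This is well defined on $B_n$ because monomials there are balanced in each index. Its value is unchanged under rescaling $(\alpha_i^\circ,\beta_i^\circ)\mapsto(\lambda_i\alpha_i^\circ,\lambda_i^{-1}\beta_i^\circ)$, provided $\prod\lambda_i=1$. It is a ring homomorphism, and it sends $a_k\mapsto f_k$ by the definition of $a_k$.

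These give $n!$ distinct homomorphisms when $\Disc(f)\neq0$, which holds for irreducible $f$. Since $\dim\SnC{K_f}{\QQ}=n!$ and the algebra is étale (it is the $S_n$-closure of a field with nonzero discriminant), these are all of the embeddings. Concretely, in the monic normalization they are the maps $\beta_j/\alpha_j\mapsto$ a permuted ordering of the roots, matching \Cref{monogenic-case} and the Bhargava--Satriano description.

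It therefore suffices to bound
\[
\Bigl|\prod_i(\alpha_i^\circ)^{e_i}(\beta_i^\circ)^{d-e_i}\Bigr|\le\Bigl(\prod_i\max(|\alpha_i^\circ|,|\beta_i^\circ|)\Bigr)^d .
\]
This is legitimate because every monomial of degree $d$ has exactly $d$ factors from each index $i$. Using the rescaling freedom (the $\lambda_i$ with product one, together with an overall scalar absorbed into one factor), I normalize each linear factor so that $\max(|\alpha_i^\circ|,|\beta_i^\circ|)$ is a fixed quantity. The product $\prod_i\max(|\alpha_i^\circ|,|\beta_i^\circ|)$ is then exactly the Mahler measure $M(f)$ of the binary form, which is independent of the factorization chosen. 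So $\|\tau_f(m)\|\le M(f)^d$.

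The remaining step is the classical inequality $M(f)\le C_n\Ht(f)$. With $f_0\neq0$, $M(f)$ equals the Mahler measure of the polynomial $f(x,1)$, namely $|f_0|\prod\max(1,|\theta_i|)$. Landau's inequality gives $M(f)\le\|f\|_2\le\sqrt{n+1}\,\Ht(f)$. This yields $C=(n+1)^{(n-1)/2}$, uniform in $d\le n-1$, or simply $C=\max_{d\le n-1}(n+1)^{d/2}$.

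The only delicate point I foresee is justifying that the listed maps exhaust all embeddings, and that the rescaling argument is compatible with the identification $\tau_f$. Both follow from the explicit description of $\mathcal S(f;\QQ)$ in the $f_0$-invertible case given by \Cref{monogenic-case} and \Cref{lem:sn-closure-is-bn}.
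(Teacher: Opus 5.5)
Your argument is correct, but it takes a different route from the paper.

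The paper never identifies the embeddings $\sigma$. It notes that $m$ is a root of $\prod_{g\in S_n}(y-g\cdot m)$. The coefficients of this polynomial are $S_n$-invariant of degree $kd$, hence by \Cref{sn-invariants} integer polynomials of degree $kd$ in $a_0,\dots,a_n$. So for any ring homomorphism $\sigma$ the image coefficients are $\ll\Ht(f)^{kd}$, and Lagrange's root bound gives $|\sigma(\tau_f(m))|\ll\Ht(f)^d$.

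You instead write down all $n!$ complex embeddings of $\mathcal S(f;\QQ)$ as evaluations at orderings of a factorization $f=\prod_i(\alpha_i^\circ x-\beta_i^\circ y)$. Exhaustiveness follows from distinctness plus $\dim_\QQ\mathcal S(f;\QQ)=n!$. Distinctness is visible on the degree-one elements $\alpha_1\cdots\alpha_{j-1}\beta_j\alpha_{j+1}\cdots\alpha_n\mapsto f_0\beta^\circ_{\pi(j)}/\alpha^\circ_{\pi(j)}$. You then bound directly by the Mahler measure, obtaining $\|\tau_f(m)\|\le M(f)^d\le(n+1)^{d/2}\Ht(f)^d$.

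The trade-off is as follows.
\begin{itemize}
\item The paper's argument is softer. It works verbatim for an arbitrary homomorphism $\sigma$ without knowing what the embeddings are, and never uses $f_0\neq0$ or separability. Its constant, however, is inexplicit.
\item Yours needs the concrete description of $\mathcal S(f;\QQ)$ via \Cref{monogenic-case} and \Cref{graded-basis}. In exchange it avoids \Cref{sn-invariants} and gives a clean explicit constant.
\end{itemize}

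One small slip: the rescaling that preserves the factorization of $f$ and the values of balanced monomials is $(\alpha_i^\circ,\beta_i^\circ)\mapsto(\lambda_i\alpha_i^\circ,\lambda_i\beta_i^\circ)$ with $\prod_i\lambda_i=1$, not $(\lambda_i\alpha_i^\circ,\lambda_i^{-1}\beta_i^\circ)$. In any case the normalization step is unnecessary, since $\prod_i\max(|\alpha_i^\circ|,|\beta_i^\circ|)=|f_0|\prod_i\max(1,|\theta_i|)=M(f)$ holds for every factorization.
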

\begin{proof} By definition it suffices to show for each homomorphism
  $\sigma\colon \SnC{K_f}{\QQ} \to \mathbf C$ that $\abs{ \sigma(\tau_f(m))} \ll \Ht(f)^d$.

  Observe that $m$ is a root of the monic polynomial equation
  \[ \sum_{k=0}^{n!} c_k y^{n!-k}: = \prod_{ g\in S_n} (y - \prod_{i=1}^n \alpha_{g(i)}^{e_{i}}
    \beta_{g(i)}^{d-e_{i}}) . \] Then $c_k $ is an element of $B_n(\mathbf Z)$ of
  degree $k d$ which is $S_n$-invariant, thus by Lemma \ref{sn-invariants} a
  polynomial in $a_0,\dots, a_n$ of degree $kd$. Since $\tau_f$ factors through
  $B_n(\mathbf Z)/(a_0-f_0,\dots,a_n-f_n)$ and therefore sends $a_i$ to $f_i$,
  we have $\abs{\sigma(\tau_f(c_k) ))} \ll \Ht(f)^{dk}$. Then
  $\sigma(\tau_f(m))$ is a root of the polynomial
  $\sum_{k=0}^{n!} \sigma(\tau_f(c_k)) y^{n!-k}$ and thus is $\ll \Ht(f)^d$ by a classical
  bound due to Lagrange. \end{proof}

\begin{remark}
  One can also show a lower bound $\|\tau_f(m)\|\gg\Ht(f)$ for all monomials
  $m$ of degree $1$, but this won't be needed.
\end{remark}

\begin{lemma}\label{not-degree-1-lower-bound}
  Let $d\in\{0,\dots,n-1\}$. For all $\varepsilon>0$ and all integers $s\geq1$, there is a
  number $\varphi(\varepsilon,s)>0$ such that for all primitive
  $f\in V_n(\ZZ)\setminus(B(\varepsilon)\cup C(s))$ (with $B(\varepsilon)$ and
  $C(s)$ defined in \Cref{sec:special-forms}), all $r\in \SnC{\OO_{K_f}}{\ZZ}$
  with $\|r\|\leq\varphi(\varepsilon,s)\Ht(f)^{d+1}$ lie in $\hatKfm{f}{d}$.
\end{lemma}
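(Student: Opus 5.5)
The plan is to argue by Minkowski's second theorem applied to the lattice $L = \SnC{\OO_{K_f}}{\ZZ}$, viewed inside $\SnC{K_f}{\QQ}\otimes\RR \cong \RR^{n!}$ with the sup-norm $\|\cdot\|$ (a symmetric convex body up to bounded distortion). The basis $\tau_f(T_n)$ of the sublattice $\SnC{R_f}{\ZZ}\subseteq L$ gives an upper-triangular picture: the vectors $\tau_f(m)$ with $m\in T_n^{\le d}$ span $\hatKfm{f}{d}$, since by \Cref{graded-basis} every monomial of degree at most $d$ is a $\ZZ[a_0,\dots,a_n]$-combination of elements of $T_n$ of degree at most $d$, and specializing $a_i\mapsto f_i$ keeps degrees bounded. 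Moreover \Cref{monomial-length-upper-bound} gives $\|\tau_f(m)\|\le C\Ht(f)^{\deg m}$. Write $\lambda_1\le\dots\le\lambda_{n!}$ for the successive minima of $L$ and $N_d=\sum_{e\le d}|T_n^e| = \dim\hatKfm{f}{d}$.

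The first step is to obtain upper bounds. Since $\hatKfm{f}{e}$ contains $N_e$ independent lattice vectors of norm $\le C\Ht(f)^e$, we get $\lambda_i\le C\Ht(f)^e$ for all $i\le N_e$. The second step is to obtain the covolume lower bound. Using \Cref{disc-of-closure}, $\Disc\SnC{R_f}{\ZZ}=\Disc(f)^{n!/2}$, and the index $[L:\SnC{R_f}{\ZZ}]$ is controlled by the index of $R_f$ in $\OO_{K_f}$. Since $f\notin C(s)$, $[\OO_{K_f}:R_f]^2\mid\Disc(f)$ forces $[\OO_{K_f}:R_f]\le s$. I expect $[L:\SnC{R_f}{\ZZ}]$ to be bounded by a function of $s$ and $n$, because $L$ is contained in $\frac{1}{[\OO_{K_f}:R_f]^{n}}\SnC{R_f}{\ZZ}$ via the surjection from tensor powers. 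Combined with $f\notin B(\varepsilon)$, this gives
\[
\operatorname{covol}(L)\gg_{\varepsilon,s} \Ht(f)^{(n-1)n!/2}.
\]
By Minkowski's second theorem, $\prod_i\lambda_i\gg_{\varepsilon,s}\Ht(f)^{(n-1)n!/2}$.

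The third step compares exponents. By \Cref{euler-identities}\ref{euler-identities-2}, $\sum_e e|T_n^e| = (n-1)n!/2$, so the upper bounds multiply to exactly $C^{n!}\Ht(f)^{(n-1)n!/2}$. Hence every successive minimum is within a constant factor (depending on $\varepsilon,s$) of its upper bound. In particular $\lambda_{N_d+1}\gg_{\varepsilon,s}\Ht(f)^{d+1}$: otherwise replacing that single upper bound by a much smaller one, and keeping the others and monotonicity, would contradict the product lower bound.

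The final step takes $\varphi(\varepsilon,s)$ below this implied constant. Suppose $r\in L$ with $\|r\|\le\varphi\Ht(f)^{d+1}$ and $r\notin\hatKfm{f}{d}$. Then $r$ together with the $N_d$ independent vectors $\tau_f(m)$, $m\in T_n^{\le d}$, gives $N_d+1$ independent lattice vectors of norm $\le\max(C\Ht(f)^d,\varphi\Ht(f)^{d+1})$. This contradicts the bound on $\lambda_{N_d+1}$ for large $\Ht(f)$; small heights form a finite set and can be absorbed by shrinking $\varphi$, or handled since $\Ht(f)^{d+1}$ is small there. The main obstacle is the index bound in the second step, namely that $[\SnC{\OO_{K_f}}{\ZZ}:\SnC{R_f}{\ZZ}]$ is bounded in terms of $[\OO_{K_f}:R_f]$. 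I would try to prove it by noting that multiplication by $N=[\OO_{K_f}:R_f]$ maps $\OO_{K_f}$ into $R_f$, so $N^n$ maps $\OO_{K_f}^{\otimes n}$ into the image of $R_f^{\otimes n}$, and then passing to the quotient $S_n$-closures via functoriality.
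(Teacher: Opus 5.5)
Your proposal is correct and follows the paper's proof essentially step for step. You use the upper bounds on the successive minima from \Cref{monomial-length-upper-bound}, and a matching lower bound on their product from \Cref{disc-of-closure}, \Cref{euler-identities}\ref{euler-identities-2}, $f\notin B(\varepsilon)\cup C(s)$ and Minkowski's second theorem. The conclusion then comes from $r$ together with the degree-$\leq d$ basis vectors giving too many short independent vectors, with small heights handled separately. Your $N^n$-scaling argument, with $N=[\OO_{K_f}:R_f]\leq s$ giving $[\SnC{\OO_{K_f}}{\ZZ}:\SnC{R_f}{\ZZ}]\leq s^{n\cdot n!}$, supplies the index bound that the paper only asserts.
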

\begin{proof}
  Let $f\in V_n(\ZZ)$ be primitive. In \Cref{graded-basis}, we have constructed a
  $\ZZ$-basis of $\mathcal S(f;\ZZ)=\SnC{R_f}{\ZZ}$, the $S_n$-closure of
  $R_f$, consisting of $|T_n^d| = E(n,d)$ monomials of degree $d$ for
  $d\in\{0,\dots,n-1\}$. Let $v_1,\dots,v_{n!}$ be this basis, with the monomials
  ordered increasingly by their degree. By the previous lemma, we have
  $\|v_i\|\ll s_i'$ for all $i$, where $s_1'\leq\dots\leq s_{n!}'$ are the following
  numbers:
\[
  \underbrace{H(f)^0\leq\dots\leq
    H(f)^0}_{E(n,0)}\leq\dots\leq\underbrace{H(f)^{n-1}\leq\dots\leq
    H(f)^{n-1}}_{E(n,n-1)}.
\]
Let $s_1,\dots,s_{n!}$ be the successive minima of $\SnC{\OO_{K_f}}{\ZZ}$ (with
respect to $\|\cdot\|$). Since $v_1,\dots,v_{n!}$ are linearly independent elements
of $\mathcal S(f;\ZZ) \subseteq \SnC{\OO_{K_f}}{\ZZ}$, we must have
$s_i \ll s_i'$ for all $i$.

If $f\notin B(\varepsilon)\cup C(s)$, then
$\lvert\Disc(f)\rvert \geq \varepsilon\Ht(f)^{2(n-1)}$ and
$[\OO_{K_f}:\mathcal S(f;\ZZ)]\leq s$. The latter condition implies that
$[\SnC{\OO_{K_f}}{\ZZ} : \mathcal S(f;\ZZ)]\ll_s 1$ and hence
$\Disc(\SnC{\OO_{K_f}}{\ZZ})\gg_s\Disc(\mathcal S(f;\ZZ))$. Then:
\begin{align*}
  s_1'\cdots s_{n!}'
  &= \prod_{d=0}^{n-1} H(f)^{d E(n,d)} && \textnormal{(by definition)} \\
  &= H(f)^{\sum_{d=0}^{n-1} d E(n,d)} \\
  &= H(f)^{(n-1)n!/2} && \textnormal{(by \Cref{euler-identities}\ref{euler-identities-2})}\\
  &\ll_\varepsilon \lvert\Disc(f)\rvert^{n!/4} && \textnormal{(since $f\notin B(\varepsilon)$)} \\
  &= \lvert\Disc(\mathcal S(f;\ZZ))\rvert^{1/2} && \textnormal{(by \Cref{disc-of-closure})} \\
  &\ll_s \lvert\Disc(\SnC{\OO_{K_f}}{\ZZ})\rvert^{1/2} && \textnormal{(since $f\notin C(s)$)} \\
  &\ll s_1\cdots s_{n!} && \textnormal{(by Minkowski's second theorem)}
\end{align*}
Together with the estimates $s_i\ll s_i'$, this implies
$s_i\asymp_{\varepsilon,s} s_i'$ for all $i$.

Now, let $d\in\{0,\dots,n-1\}$ and let
$I \colonequals E(n,0) + \dots + E(n,d)$, the number of monomials of degree at
most~$d$ in our $\ZZ$-basis $v_1,\dots,v_{n!}$ of $\mathcal S(f;\ZZ)$. If
$r\in\SnC{\OO_{K_f}}{\ZZ}\setminus\hatKfm{f}{d}$, then the $I+1$ vectors
$v_1,\dots,v_I \in \hatKfm{f}{d}$ and $r\notin\hatKfm{f}{d}$ are linearly independent.
In particular, one of them must have length
$\geq s_{I+1} \gg_{\varepsilon,s} H(f)^{d+1}$. The vectors $v_1,\dots,v_I$ have length
$\ll H(f)^d$, so for sufficiently large $H(f)$, they cannot have length
$\gg_{\varepsilon,s} H(f)^{d+1}$. Then, $r$ must have length
$\gg_{\varepsilon,s} H(f)^{d+1}$. The general claim follows as there are only finitely many
$f\in V_n(\ZZ)$ with bounded height $H(f)$ (and for each of them, $\|r\|$ is
bounded from below by some positive constant for all
$0\neq r\in\SnC{\OO_{K_f}}{\ZZ}$).
\end{proof}

\begin{lemma}
\label{lem:other-generators-long}
For any constant $C>0$, there is a subset $V_n''(\ZZ)$ of $V_n'(\ZZ)$ of
density $1$ in $V_n'(\ZZ)$ such that for all $d\in\{0,\dots,n-1\}$, all
$f\in V_n''(\ZZ)$, and all $r\in\SnC{\OO_{K_f}}{\ZZ}\setminus\hatKfm{f}{d}$, we have
$\|r\|>C\cdot\Ht(f)^d$.
\end{lemma}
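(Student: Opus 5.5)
The plan is to derive this from \Cref{not-degree-1-lower-bound} by a limiting argument in $\varepsilon$ and $s$, plus a cutoff on the height. Fix $C>0$. The key point is that \Cref{not-degree-1-lower-bound}, applied with $d$, says that any $r\in\SnC{\OO_{K_f}}{\ZZ}$ with $\|r\|\leq\varphi(\varepsilon,s)\Ht(f)^{d+1}$ lies in $\hatKfm{f}{d}$. Contrapositively, every $r\notin\hatKfm{f}{d}$ satisfies $\|r\|>\varphi(\varepsilon,s)\Ht(f)^{d+1}$. This exceeds $C\Ht(f)^d$ as soon as $\Ht(f)\geq C/\varphi(\varepsilon,s)$. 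So for fixed $\varepsilon,s$ the desired bound holds for all primitive $f\notin B(\varepsilon)\cup C(s)$ of sufficiently large height, simultaneously for every $d\in\{0,\dots,n-1\}$ (take the minimum of the finitely many $\varphi$'s over $d$).

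To produce a single density-$1$ set, I would take sequences $\varepsilon_k\to0$ and $s_k\to\infty$ and set $H_k=C/\min_d\varphi_d(\varepsilon_k,s_k)$. Then I would choose an increasing sequence of height thresholds $X_1<X_2<\cdots$ and define
\[
V_n''(\ZZ)\colonequals\{f\in V_n'(\ZZ): f\notin B(\varepsilon_k)\cup C(s_k)\text{ and }\Ht(f)\geq H_k,\text{ where }X_k\leq\Ht(f)<X_{k+1}\}.
\]
By construction every $f\in V_n''(\ZZ)$ satisfies the hypotheses of \Cref{not-degree-1-lower-bound} for the corresponding $(\varepsilon_k,s_k)$ and has large enough height, so the conclusion holds. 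It remains to check that the set has density $1$.

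For the density, I would use \Cref{lem:usually-large-disc} and \Cref{lem:usually-almost-squarefree}. These give $N(B(\varepsilon_k)\cup C(s_k),X)\leq(\beta(\varepsilon_k)+\gamma(s_k))X^{n+1}+o_k(X^{n+1})$. The set $\{\Ht(f)<H_k\}$ is finite, and $V_n'(\ZZ)$ has positive density $\asymp X^{n+1}$ by \Cref{lem:usually-full-galois} (primitive forms have density $1/\zeta(n+1)$). I would choose $X_{k+1}$ large enough, and growing fast enough relative to $X_k$, that for all $X\geq X_{k+1}$ the error terms for index $k+1$ are below $2^{-k}X^{n+1}$ and the contribution of heights below $X_{k}$ is negligible. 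Then for $X_k\leq X<X_{k+1}$, the complement of $V_n''$ up to height $X$ has size at most $(\beta(\varepsilon_{k-1})+\gamma(s_{k-1})+o(1))X^{n+1}$, using monotonicity of $B(\varepsilon)$ in $\varepsilon$ and of $C(s)$ in $s$, so that the bounds for the smaller index dominate the earlier ranges. This gives proportion tending to $0$.

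The main obstacle is purely bookkeeping: making the diagonal choice of $X_k$ so that the uniform errors are controlled across all height ranges. The monotonicity $B(\varepsilon_{k})\subseteq B(\varepsilon_{k-1})$ and $C(s_k)\subseteq C(s_{k-1})$ keeps this straightforward. Alternatively, one can avoid the diagonal construction by noting that density $1$ only requires, for every $\delta>0$, a density $\geq1-\delta$ set. However, the statement asks for one fixed set, so the diagonal step is needed.
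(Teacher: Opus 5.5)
Your proof is correct. Its ingredients are exactly the paper's: the contrapositive of \Cref{not-degree-1-lower-bound}, a height cutoff (your condition $\Ht(f)\geq H_k$ plays the role of the paper's ball $D(\varepsilon,s)$), and the fact that the densities of $B(\varepsilon)$ and $C(s)$ tend to $0$.

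The only difference is how you assemble a single set, and here your closing remark is mistaken: no diagonal construction is needed. The paper simply takes
\[
V_n''(\ZZ) \colonequals \bigcup_{\varepsilon>0,\ s\geq1} V_n'(\ZZ)\setminus\bigl(B(\varepsilon)\cup C(s)\cup D(\varepsilon,s)\bigr),
\]
which is one fixed set. Every element lies in some piece, so it satisfies the conclusion for the given $C$. Because the union contains each piece, its lower density in $V_n'(\ZZ)$ is at least that of every piece. By \Cref{lem:usually-large-disc,lem:usually-almost-squarefree} these densities tend to $1$ as $\varepsilon\to0$ and $s\to\infty$, so the union has density $1$.

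Your height-stratified set with thresholds $X_k$ also works. The inclusion $B(\varepsilon_k)\cup C(s_k)\subseteq B(\varepsilon_{k-1})\cup C(s_{k-1})$ and the rapid growth of the $X_k$ do handle the error terms correctly. However, it buys nothing over the union trick, which avoids choosing sequences and growth rates altogether.
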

\begin{proof}
  By \Cref{not-degree-1-lower-bound}, for any $\varepsilon>0$ and $s\geq1$, there is some
  ball $D(\varepsilon,s)\subseteq V_n(\RR)$ such that for all
  $f\in V_n'(\ZZ)\setminus(B(\varepsilon)\cup C(s)\cup D(\varepsilon,s))$ and all
  $r\in\SnC{\OO_{K_f}}{\ZZ}\setminus\hatKfm{f}{d}$, we have
  $\|r\|>C\cdot\Ht(f)^d$. We can thus take
\[
  V_n''(\ZZ) \colonequals \bigcup_{\substack{\varepsilon>0,\\ s\geq1}} V_n'(\ZZ)\setminus(B(\varepsilon)\cup C(s)\cup
  D(\varepsilon,s)).
\]
The set $V_n'(\ZZ)\setminus(B(\varepsilon)\cup C(s)\cup D(\varepsilon,s))$ has the same density in
$V_n'(\ZZ)$ as the set $V_n'(\ZZ)\setminus(B(\varepsilon)\cup C(s))$ since the ball
$D(\varepsilon,s)$ only contains finitely many forms $f\in V_n'(\ZZ)$. Thus, the (lower)
density of $V_n''(\ZZ)$ is greater than or equal to $1$ minus the density of
$B(\varepsilon)$ minus the density of $C(s)$. By \Cref{lem:usually-large-disc} and
\Cref{lem:usually-almost-squarefree}, these densities go to $0$ as
$\varepsilon\to0$ and $s\to\infty$.
\end{proof}

\begin{lemma}
\label{lem:deg1-usually-equal}
There is a subset $V_n''(\ZZ)$ of $V_n'(\ZZ)$ of density $1$ such that for all
$f,f'\in V_n''(\ZZ)$ giving rise to the same number field
$K\coloneq K_f=K_{f'}$ and all $d\in\{0,\dots,n-1\}$, we have
$\hatKfm{f}{d}=\hatKfm{f'}{d}$ (as subspaces of $\hatK=\hatKf{f}=\hatKf{f'}$).

More precisely: for any isomorphism $K_f\cong K_{f'}$, the resulting isomorphism
$\hatKf{f}\cong\hatKf{f'}$ sends $\hatKfm{f}{d}$ to $\hatKfm{f'}{d}$.
\end{lemma}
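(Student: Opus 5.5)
We take $V_n''(\ZZ)$ to be the set produced by \Cref{lem:other-generators-long}, applied with $C$ equal to the constant of \Cref{monomial-length-upper-bound}. (If necessary, first shrink it by removing the finitely many forms of small height; this does not change the density.) Now fix $f,f'\in V_n''(\ZZ)$ and an isomorphism $\phi\colon K_f\to K_{f'}$. The isomorphism $\phi$ induces an isomorphism of $S_n$-closures $\Phi\colon\hatKf{f}\to\hatKf{f'}$. Since $\hatK$ is functorial in $K$ and integral structures are preserved, $\Phi$ maps $\SnC{\OO_{K_f}}{\ZZ}$ onto $\SnC{\OO_{K_{f'}}}{\ZZ}$. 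Moreover, $\Phi$ preserves the absolute value $\|\cdot\|$: precomposition with $\Phi$ permutes the set of $\QQ$-algebra homomorphisms to $\CC$, and $\|\cdot\|$ is the maximum over all of them.

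By symmetry we may assume $\Ht(f)\leq\Ht(f')$, and we argue by induction on $d$; the case $d=0$ is trivial since both sides equal $\QQ$. The space $\hatKfm{f}{d}$ is spanned by the images $\tau_f(m)$ of monomials $m$ of degree at most $d$. These elements lie in $\SnC{R_f}{\ZZ}\subseteq\SnC{\OO_{K_f}}{\ZZ}$, and \Cref{monomial-length-upper-bound} gives $\|\tau_f(m)\|\leq C\Ht(f)^{\deg m}\leq C\Ht(f)^d\leq C\Ht(f')^d$. Hence $r=\Phi(\tau_f(m))$ is an element of $\SnC{\OO_{K_{f'}}}{\ZZ}$ with $\|r\|\leq C\Ht(f')^d$. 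Since $f'\in V_n''(\ZZ)$, the contrapositive of \Cref{lem:other-generators-long} forces $r\in\hatKfm{f'}{d}$. Therefore $\Phi(\hatKfm{f}{d})\subseteq\hatKfm{f'}{d}$.

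Equality then follows by comparing dimensions. Both spaces have dimension $E(n,0)+\dots+E(n,d)$, because by \Cref{graded-basis} the monomials of $T_n$ of degree at most $d$ form a basis of the degree $\leq d$ span in $\mathcal S(f;\QQ)$. The inverse direction, $\Phi^{-1}(\hatKfm{f'}{d})\subseteq\hatKfm{f}{d}$, thus needs no separate argument.

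The main subtlety is the inequality $\|\tau_f(m)\|\leq C\Ht(f)^d$ versus the strict bound $\|r\|>C\Ht(f')^d$ from \Cref{lem:other-generators-long}: the constants must match exactly. This is handled by choosing the constant in \Cref{lem:other-generators-long} to be the constant of \Cref{monomial-length-upper-bound}, or slightly larger, so the needed strict inequality holds.

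A second point to check is that the map $\Phi$ really carries $\SnC{\OO_{K_f}}{\ZZ}$ to $\SnC{\OO_{K_{f'}}}{\ZZ}$. This holds because $\phi$ restricts to an isomorphism $\OO_{K_f}\cong\OO_{K_{f'}}$, and $S_n$-closure is functorial for ring isomorphisms: the ideal $I(A/B)$ is defined intrinsically. The induced map is compatible with the embeddings into $\SnC{K}{\QQ}$ since $\SnC{\OO_K}{\ZZ}\otimes\QQ=\SnC{K}{\QQ}$ by base change.

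The WLOG step $\Ht(f)\leq\Ht(f')$ is legitimate because the conclusion is symmetric: applying the argument to $\phi^{-1}$ covers the other ordering.
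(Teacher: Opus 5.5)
Your proposal is correct and follows the paper's proof. You take $V_n''(\ZZ)$ from \Cref{lem:other-generators-long} with the constant $C$ of \Cref{monomial-length-upper-bound}, assume $\Ht(f)\leq\Ht(f')$, note that the spanning elements $\tau_f(m)$ of $\hatKfm{f}{d}$ are integral of size at most $C\Ht(f')^d$ and so lie in $\hatKfm{f'}{d}$, and conclude by equality of dimensions; the announced induction on $d$ and the preliminary shrinking of $V_n''(\ZZ)$ are never used but do no harm.
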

\begin{proof}
Let $C$ as in \Cref{monomial-length-upper-bound} and then let $V_n''(\ZZ)$ as in \Cref{lem:other-generators-long}. Assume without loss of generality that $\Ht(f)\leq \Ht(f')$. The $\QQ$-vector space $\hatKfm{f}{d}$ is according to \Cref{monomial-length-upper-bound} generated by vectors $m\in \mathcal S(f;\ZZ)\subseteq\SnC{\OO_{K_f}}{\ZZ}$ with $\|m\| \leq C\cdot \Ht(f)^d$. By \Cref{lem:other-generators-long}, these vectors must all lie in $\hatKfm{f'}{d}$. Thus, $\hatKfm{f}{d} \subseteq \hatKfm{f'}{d}$. On the other hand, the vector spaces $\hatKfm{f}{d}$ and $\hatKfm{f'}{d}$ have the same dimension, so we in fact have equality.
\end{proof}

\section{Uniqueness of binary forms}
\label{sec:without-representation-theory}

To show that two forms in $V_n(\QQ)$ are $G(\QQ)$-equivalent, we will show that
their roots have the same cross-ratios.

\begin{definition}
  Let $F$ be a field. The \cdef{cross ratio} of four distinct points
  $P_1,\dots,P_4\in\PP^1(F)$ with $P_i=[\beta_i:\alpha_i]$ is the point
\[
(P_1,P_2;P_3,P_4)
\coloneq
\left[
(\alpha_1\beta_3-\beta_1\alpha_3)(\alpha_2\beta_4-\beta_2\alpha_4)
:
(\alpha_1\beta_4-\beta_1\alpha_4)(\alpha_2\beta_3-\beta_2\alpha_3)
\right] \in \PP^1(F).
\]
\end{definition}

\begin{example}
  If $P_1=[1:0]$ and $P_2=[0:1]$ and $P_3=[1:1]$, then $(P_1,P_2;P_3,P_4)=P_4$.
\end{example}

Famously, cross ratios can be used to determine whether two lists of points in
$\PP^1(F)$ are related by a projective transformation:

\begin{lemma}
\label{lem:cross-ratio-map}
Let $n\geq3$ and let $P_1,\dots,P_n$ and $P_1',\dots,P_n'$ be two $n$-tuples of
distinct points in $\PP^1(F)$. There is a matrix $g\in\PGL_2(F)$ sending $P_i$ to
$P_i'$ for all $i=1,\dots,n$ if and only if
$(P_1,P_2;P_3,P_j) = (P_1',P_2';P_3',P_j')$ for all $j=4,\dots,n$.
\end{lemma}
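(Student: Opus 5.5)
The plan is the classical argument: use the simple transitivity of $\PGL_2(F)$ on ordered triples of distinct points, together with the $\PGL_2$-invariance of the cross ratio. Throughout, $F$ is a field and points are written $P=[\beta:\alpha]$ as in the definition.

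\emph{Invariance.} First I will check that the cross ratio is invariant under $\PGL_2(F)$. Applying a matrix $\gamma\in\GL_2(F)$ to each pair $(\beta_i,\alpha_i)$ multiplies every $2\times2$ determinant $\alpha_i\beta_j-\beta_i\alpha_j$ by the same factor $\det\gamma$. Each coordinate of $(P_1,P_2;P_3,P_4)$ is a product of two such determinants, so both coordinates scale by $(\det\gamma)^2$. Rescaling a representative $(\beta_i,\alpha_i)$ by $\lambda\in F^\times$ scales both coordinates by $\lambda$, because each index appears exactly once in each coordinate. Hence the cross ratio is well defined on points and $\PGL_2(F)$-invariant. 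This gives the ``only if'' direction at once.

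\emph{Normalization.} Next I will use that $\PGL_2(F)$ acts simply transitively on ordered triples of distinct points of $\PP^1(F)$. For transitivity, given distinct $P_1,P_2,P_3$, choose representatives $v_1,v_2$ of $P_1,P_2$; these are linearly independent. Since $P_3$ differs from both, $v_3=av_1+bv_2$ with $a,b\ne0$. The matrix with columns $av_1,bv_2$ sends $[1:0],[0:1],[1:1]$ to $P_1,P_2,P_3$. For uniqueness, a matrix fixing $[1:0]$ and $[0:1]$ is diagonal, and fixing $[1:1]$ forces it to be scalar.

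\emph{The ``if'' direction.} Suppose the cross ratios agree. Choose $h,h'\in\PGL_2(F)$ sending $P_1,P_2,P_3$, respectively $P_1',P_2',P_3'$, to $[1:0],[0:1],[1:1]$. By invariance and the Example, the $j$-th cross ratio equals $hP_j$, and similarly equals $h'P_j'$. So $hP_j=h'P_j'$ for all $j\ge4$, and this also holds for $j\le3$ by construction. Then $g=h'^{-1}h$ sends $P_i$ to $P_i'$ for all $i$.

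The Example needs a quick sanity check: its statement implicitly requires $P_4$ to be distinct from the first three points, and our $P_j$ are distinct, so this is fine. I see no real obstacle here; the only care needed is the well-definedness bookkeeping in the invariance step.
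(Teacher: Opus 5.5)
Your proposal is correct and follows the paper's own argument: the paper also combines the simple transitivity of $\PGL_2(F)$ on ordered triples of distinct points, the $\PGL_2(F)$-invariance of the cross ratio, and the normalization in the preceding Example. What you add are the details the paper leaves implicit, namely the $(\det\gamma)^2$ scaling, well-definedness under rescaling representatives, the explicit construction of the normalizing matrices, and $g=h'^{-1}h$.
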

\begin{proof}
  The group $\PGL_2(F)$ acts simply transitively on triples of distinct points
  in $\PP^1(F)$. The cross ratio is invariant under the action of $\PGL_2(F)$
  on $\PP^1(F)$. The claim then follows from the previous example.
\end{proof}

Now, let $n\geq3$. For any $f\in V_n'(\ZZ)$ (which certainly satisfies
$\alpha_1\cdots\alpha_n = f_0 \neq 0$ in the field $\hatKf{f}$), we define some elements
$\alpha_{f,1},\dots,\alpha_{f,n},\beta_{f,1},\dots,\beta_{f,n}$ of $\hatKf{f}$ as follows:
\[
	\alpha_{f,1},\dots,\alpha_{f,n-1}\colonequals1
	\qquad\textnormal{and}\qquad
	\alpha_{f,n} \colonequals f_0,
\]
\[
	\frac{\beta_{f,i}}{\alpha_{f,i}} \colonequals \frac{\alpha_1\dots\alpha_{i-1}\beta_i\alpha_{i+1}\dots\alpha_n}{\alpha_1\dots\alpha_{i-1}\alpha_i\alpha_{i+1}\dots\alpha_n}
	\qquad\textnormal{for }i=1,\dots,n.
\]
In $\hatKf{f}$, any monomial in $\alpha_i,\beta_i$ lying in $B_n(\ZZ)$ is equal to the
corresponding monomial in $\alpha_{f,i},\beta_{f,i}$:
\[
  \prod_{i=1}^n \alpha_i^{d-e_i} \beta_i^{e_i} = (\alpha_1\cdots\alpha_n)^d \prod_{i=1}^n
  \left(\frac{\alpha_1\cdots\alpha_{i-1}\beta_i\alpha_{i+1}\cdots\alpha_n}{\alpha_1\cdots\alpha_{i-1}\alpha_i\alpha_{i+1}\cdots\alpha_n}\right)^{e_i}
  = (\alpha_{f,1}\cdots\alpha_{f,n})^d \prod_{i=1}^n \left(\frac{\beta_{f,i}}{\alpha_{f,i}}\right)^{e_i} =
  \prod_{i=1}^n \alpha_{f,i}^{d-e_i} \beta_{f,i}^{e_i}.
\]
In particular, we have
\begin{equation}\label{eq:concrete-factorization}
  f(x,y) = \prod_{i=1}^n (\alpha_{f,i}x - \beta_{f,i}y),
\end{equation}
so the points $P_{f,i} \colonequals [\beta_{f,i}:\alpha_{f,i}]$ are the roots of
$f$ in $\PP^1(\hatKf{f})$.

We now explain how to determine from $\hatKfm{f}{1}$ the cross ratios
$[P_{f,1},P_{f,2};P_{f,3},P_{f,j}]$ for $j=4,\dots,n$. (This is of course
vacuous for $n=3$.) First, we will apply the $\QQ$-linear projection
\[
	\rho_j : \mathcal S(f;\QQ) \rightarrow \mathcal S(f;\QQ), \qquad
	r\mapsto \tfrac{1}{4}(r-(1\;3).r-(2\;j).r+(1\;3)(2\;j).r)
\]
to the $\QQ$-vector space of elements on which the transpositions $(1\;3)$ and $(2\;j)$ both act as negation.

\begin{lemma}
  \label{lem:U-generators}
  Let $n\geq3$ and $f\in V_n'(\ZZ)$. For any $j=4,\dots,n$, the image of
  $\hatKfm{f}{1}$ under $\rho_j$ is the $\QQ$-vector space spanned by the
  expressions
  \[
    u_{f,j,S,T} \coloneq (\alpha_{f,1}\beta_{f,3}-\beta_{f,1}\alpha_{f,3})
    (\alpha_{f,2}\beta_{f,j}-\beta_{f,2}\alpha_{f,j}) \prod_{i\in S}\alpha_{f,i}\prod_{i\in T}\beta_{f,i}
\]
for partitions $\{1,2,3,j\}\sqcup S\sqcup T=\{1,\dots,n\}$.
\end{lemma}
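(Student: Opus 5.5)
The approach is to compute $\rho_j$ directly on a spanning set of $\hatKfm{f}{1}$. By definition $\hatKfm{f}{1}$ is spanned by the images of the monomials of degree $\leq 1$ in $B_n(\ZZ)$. These are $1$ (degree $0$) and the monomials $\prod_{i\notin A}\alpha_i\prod_{i\in A}\beta_i$ for subsets $A\subseteq\{1,\dots,n\}$. By the identity preceding \eqref{eq:concrete-factorization}, each such monomial equals $m_A\colonequals\prod_{i\notin A}\alpha_{f,i}\prod_{i\in A}\beta_{f,i}$ in $\hatKf{f}$. Since $\rho_j$ is $\QQ$-linear, the image $\rho_j(\hatKfm{f}{1})$ is spanned by $\rho_j(1)$ and the $\rho_j(m_A)$.

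Here the $S_n$-action on $\hatKf{f}$ is the one induced from permuting the indices of $\alpha_i,\beta_i$ in $B_n$. This action preserves the ideal $(a_k-f_k)$, so it is well defined on $\mathcal S(f;\QQ)$. Under it, $\sigma.m_A=m_{\sigma(A)}$, because $\sigma$ sends the class of the $B_n$-monomial to the class of the permuted $B_n$-monomial. I will phrase everything via $B_n$-monomials and only translate to $\alpha_{f,i},\beta_{f,i}$ at the end, which avoids any issue with the non-symmetric normalization $\alpha_{f,n}=f_0$.

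Next I compute $\rho_j(1)$ and $\rho_j(m_A)$. We have $\rho_j(1)=\tfrac14(1-1-1+1)=0$. For a monomial $m_A$, write $A=A_0\sqcup T$ with $A_0=A\cap\{1,2,3,j\}$ and $T=A\setminus\{1,2,3,j\}$, and set $S$ to be the complement of $T$ in $\{1,\dots,n\}\setminus\{1,2,3,j\}$. The transpositions $(1\;3)$ and $(2\;j)$ fix $S$ and $T$. Hence
\[
\rho_j(m_A)=\prod_{i\in S}\alpha_{f,i}\prod_{i\in T}\beta_{f,i}\cdot\rho'(\text{local factor}),
\]
where $\rho'=\tfrac14(1-(1\;3))(1-(2\;j))$ acts on the four-variable local factor $\prod_{i\in\{1,2,3,j\}\setminus A_0}\alpha_{f,i}\prod_{i\in A_0}\beta_{f,i}$. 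This factorizes as $\tfrac12(1-(1\;3))$ applied to the $\{1,3\}$-part times $\tfrac12(1-(2\;j))$ applied to the $\{2,j\}$-part. For the $\{1,3\}$-part, $\tfrac12(1-(1\;3))$ kills $\alpha_1\alpha_3$ and $\beta_1\beta_3$, and sends $\alpha_1\beta_3\mapsto\tfrac12(\alpha_1\beta_3-\beta_1\alpha_3)$ and $\beta_1\alpha_3\mapsto-\tfrac12(\alpha_1\beta_3-\beta_1\alpha_3)$. The $\{2,j\}$-part behaves the same way. So each $\rho_j(m_A)$ is $0$ or $\pm\tfrac14$ times the $B_n$-version of $u_{f,j,S,T}$. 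Conversely, each $u_{f,j,S,T}$ equals $4\rho_j(m_A)$ for $A=\{3,j\}\sqcup T$. This gives both inclusions.

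The only delicate step is justifying that the $S_n$-action commutes with the identification of $B_n$-monomials with the $\alpha_{f,i},\beta_{f,i}$-monomials. This holds because the identification is an identity of classes in $\hatKf{f}$ of genuine elements of $B_n$, and the action is on $B_n$. Expanding $u_{f,j,S,T}$ gives a signed sum of four degree-$1$ monomials in $\alpha_{f,i},\beta_{f,i}$, each equal to the class of the corresponding $B_n$-monomial. So the computation carried out in $B_n$ descends, and the lemma follows.
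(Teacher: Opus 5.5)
Your proof is correct and follows the paper's argument. Like the paper, you apply $\rho_j$ to the degree-$\leq 1$ monomials: these are killed when $\beta_{f,1},\beta_{f,3}$ (or $\beta_{f,2},\beta_{f,j}$) occur with the same exponent, and are otherwise sent to $\pm\tfrac14 u_{f,j,S,T}$. You also make explicit two points the paper leaves implicit: the reverse inclusion via $u_{f,j,S,T}=4\rho_j(m_{\{3,j\}\sqcup T})$, and the fact that the $S_n$-action must be computed on classes of $B_n$-monomials rather than on the non-symmetrically normalized $\alpha_{f,i},\beta_{f,i}$ individually.
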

\begin{proof}
  Look at the images of the monomials in $B_n(\ZZ)$ of degree $\leq1$ under the
  map $\rho_j$. Some are sent to zero (namely those in which $\beta_{f,1}$ and
  $\beta_{f,3}$ occur with the same exponent or in which $\beta_{f,2}$ and
  $\beta_{f,j}$ occur with the same exponent), and the others are sent to rational
  multiples of the claimed generators.
\end{proof}

Next, we apply the map
\[
  \pi_j:\hatKf{f}\rightarrow \hatKf{f} \times \hatKf{f},\qquad r \mapsto (r,(3\;j).r).
\]

\begin{lemma}
\label{lem:proj-to-dim2}
Let $n\geq3$ and $f\in V_n'(\ZZ)$. For any $j=4,\dots,n$, the
$\mathcal S(f;\QQ)$-vector space (not $\QQ$-vector space!) spanned by the image
of $\hatKfm{f}{1}$ under $\pi_j\circ\rho_j$ corresponds to the point
$(P_{f,1},P_{f,2};P_{f,3},P_{f,j}) \in \PP^1(\hatKf{f})$.
\end{lemma}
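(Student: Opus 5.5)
By \Cref{lem:U-generators}, the image of $\hatKfm{f}{1}$ under $\rho_j$ is spanned over $\QQ$ by the elements $u_{f,j,S,T}$. So the $\hatKf{f}$-span of $\pi_j\rho_j(\hatKfm{f}{1})$ is the $\hatKf{f}$-span of the vectors $\pi_j(u_{f,j,S,T})=(u_{f,j,S,T},(3\;j).u_{f,j,S,T})$. The plan is to show that every such vector is a $\hatKf{f}$-multiple of one fixed vector representing the cross ratio, and that at least one of them is nonzero.

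Write $D_{ab}\colonequals \alpha_{f,a}\beta_{f,b}-\beta_{f,a}\alpha_{f,b}$ and $M_{S,T}\colonequals\prod_{i\in S}\alpha_{f,i}\prod_{i\in T}\beta_{f,i}$. Then $u_{f,j,S,T}=D_{13}D_{2j}M_{S,T}$. The transposition $(3\;j)$ permutes the variables $\alpha_3\leftrightarrow\alpha_j$ and $\beta_3\leftrightarrow\beta_j$ in $B_n(\ZZ)$. Since $S,T$ avoid $\{1,2,3,j\}$, it fixes $M_{S,T}$.

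The step needing care is that $(3\;j)$ acts on the normalized elements $\alpha_{f,i},\beta_{f,i}$ compatibly. The normalizations are only defined through ratios of degree-one monomials in $B_n(\ZZ)$. However, $u_{f,j,S,T}$ is the image of the element $(\alpha_1\beta_3-\beta_1\alpha_3)(\alpha_2\beta_j-\beta_2\alpha_j)\prod_S\alpha_i\prod_T\beta_i$ of $B_n(\ZZ)$, by the monomial identity preceding \eqref{eq:concrete-factorization}. So I would apply $(3\;j)$ at the level of $B_n(\ZZ)$ and then transfer back by the same identity. This gives
\[(3\;j).u_{f,j,S,T}=D_{1j}D_{23}M_{S,T}.\]
Hence
\[\pi_j(u_{f,j,S,T})=M_{S,T}\cdot\bigl(D_{13}D_{2j},\,D_{1j}D_{23}\bigr).\]
The second factor is independent of $(S,T)$, and by definition it represents $(P_{f,1},P_{f,2};P_{f,3},P_{f,j})$.

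It remains to check nondegeneracy. $\hatKf{f}$ is a field, since $f\in V_n'(\ZZ)$ has Galois group $S_n$, so the $S_n$-closure is the Galois closure. The roots $P_{f,i}$ are distinct because $\Disc(f)\neq0$, so $D_{13}D_{2j}\neq0$ and the vector is nonzero. Taking for instance $S=\{1,\dots,n\}\setminus\{1,2,3,j\}$ and $T=\emptyset$ gives $M_{S,T}=\prod_{i\in S}\alpha_{f,i}\neq0$, since $\alpha_{f,i}\in\{1,f_0\}$ and $f_0\neq0$. Therefore the span is exactly the one-dimensional line in $\hatKf{f}^2$ corresponding to the cross ratio.

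The main obstacle is the bookkeeping in the $(3\;j)$ step, namely justifying the action on the normalized $\alpha_{f,i},\beta_{f,i}$. Everything else follows directly from the formulas above.
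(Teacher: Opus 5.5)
Your proposal is correct and follows the paper's proof: both compute $\pi_j(u_{f,j,S,T}) = M_{S,T}\cdot(D_{13}D_{2j},\,D_{1j}D_{23})$ and read off that the span is the line of the cross ratio. Your extra care adds detail the paper leaves implicit. You apply $(3\;j)$ at the level of $B_n(\ZZ)$, which is needed because the normalized $\alpha_{f,i},\beta_{f,i}$ are not individually permuted, and you check nondegeneracy via $\Disc(f)\neq 0$ and a nonzero $M_{S,T}$.
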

\begin{proof}
  The map $\pi_j$ sends $u_{f,j,S,T}$ to
\[
  \begin{pmatrix}
    (\alpha_{f,1}\beta_{f,3}-\beta_{f,1}\alpha_{f,3})
    (\alpha_{f,2}\beta_{f,j}-\beta_{f,2}\alpha_{f,j}) \\
    (\alpha_{f,1}\beta_{f,j}-\beta_{f,1}\alpha_{f,j})
    (\alpha_{f,2}\beta_{f,3}-\beta_{f,2}\alpha_{f,3})
  \end{pmatrix}
  \cdot \prod_{i\in S}\alpha_{f,i}\prod_{i\in T}\beta_{f,i}.
\]
This shows that the $\hatKf{f}$-span of $\pi_j(\rho_j(\hatKfm{f}{1}))$ is the line
in $\hatKf{f} \times \hatKf{f}$ corresponding to the cross ratio point
$(P_{f,1},P_{f,2};P_{f,3},P_{f,j}) \in \PP^1(\hatKf{f})$. (We have
$\alpha_{f,i},\beta_{f,i}\neq0$ for all $i$ as we assumed $f$ to be irreducible.)
\end{proof}

\begin{lemma}
\label{lem:U-equal-then-gl}
Let $n\geq3$ and let $f,f'\in V_n'(\ZZ)$ give rise to the same number field
$K\coloneq K_f = K_{f'}$. Assume that $\hatKfm{f}{1} = \hatKfm{f'}{1}$ (as subspaces
of $\hatK$). Then, $f$ and $f'$ are $G(\QQ)$-equivalent, i.e., there exist
$\lambda\in\QQ^\times$ and $\gamma\in\GL_2(\QQ)$ such that $\lambda f^\gamma = f'$.
\end{lemma}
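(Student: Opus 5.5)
The plan is to use the cross-ratio machinery of \Cref{lem:U-generators} and \Cref{lem:proj-to-dim2}, together with \Cref{lem:cross-ratio-map}, to produce a projective transformation carrying the roots of $f$ to the roots of $f'$. Fix an isomorphism $K_f\cong K_{f'}$. It induces an isomorphism $\hatKf{f}\cong\hatKf{f'}$, and we identify both with $\hatK$. The first point to settle is that this identification is compatible with the $S_n$-actions. I would argue this via the description of the $S_n$-closure as $K^{\otimes n}/I(K/\QQ)$, where $S_n$ permutes tensor factors. Under \Cref{lem:sn-closure-is-bn}, this action matches the permutation of the indices of $\alpha_i,\beta_i$, since $\mu$ sends the $j$-th tensor factor through $\mu^{(j)}$. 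Consequently the operators $\rho_j$ and $\pi_j$, which are built purely from the $S_n$-action, agree on $\hatK$ whether they are defined via $f$ or via $f'$.

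With this in hand, the hypothesis $\hatKfm{f}{1}=\hatKfm{f'}{1}$ gives $\pi_j\rho_j(\hatKfm{f}{1})=\pi_j\rho_j(\hatKfm{f'}{1})$ for each $j=4,\dots,n$. Taking $\hatK$-spans and applying \Cref{lem:proj-to-dim2} to both forms, we get
\[
(P_{f,1},P_{f,2};P_{f,3},P_{f,j})=(P_{f',1},P_{f',2};P_{f',3},P_{f',j})\quad\text{in }\PP^1(\hatK).
\]
Both tuples consist of distinct points, because $f$ and $f'$ are separable. By \Cref{lem:cross-ratio-map}, applied over the field $F=\hatK$ (which is a field since $f\in V_n'(\ZZ)$ has Galois group $S_n$), there is $g\in\PGL_2(\hatK)$ with $gP_{f,i}=P_{f',i}$ for all $i$.

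The main obstacle is descending $g$ to $\PGL_2(\QQ)$. Here I would use that the $P_{f,i}$ and $P_{f',i}$ are permuted compatibly by $S_n=\Gal(\hatK/\QQ)$, since $\sigma\in S_n$ sends $P_{f,i}$ to $P_{f,\sigma(i)}$. Applying $\sigma$ to the relation $gP_{f,i}=P_{f',i}$ shows that $\sigma(g)$ also maps $P_{f,i}$ to $P_{f',i}$ for every $i$. Because $n\geq3$ and $\PGL_2$ acts simply transitively on triples of distinct points, this forces $\sigma(g)=g$. Hence $g\in\PGL_2(\hatK)^{S_n}=\PGL_2(\QQ)$, the last equality by Hilbert 90 (alternatively, normalize one nonzero entry of a representative to be $1$, after which the remaining entries are Galois-fixed). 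Lift $g$ to a matrix in $\GL_2(\QQ)$.

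The final step is to translate this into an equivalence of forms. By the convention stated in the paper, if $P$ is a root of $f$ then $\gamma^{-1}P$ is a root of $f^\gamma$. Taking $\gamma$ to be a lift of $g^{-1}$, the form $f^\gamma$ has the same roots $P_{f',i}$ as $f'$. Two binary $n$-ic forms with the same $n$ distinct roots in $\PP^1(\hatK)$ differ by a nonzero scalar factor, which must lie in $\QQ$ because both forms have rational coefficients. This gives $\lambda\in\QQ^\times$ with $\lambda f^\gamma=f'$, as required.
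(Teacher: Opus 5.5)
Your proposal is correct and follows the paper's proof step by step. Like the paper, you get equal cross ratios from \Cref{lem:proj-to-dim2}, a projective transformation over $\hatK$ from \Cref{lem:cross-ratio-map}, Galois invariance from simple transitivity on triples, descent to $\GL_2(\QQ)$, and $\lambda f^{\gamma}=f'$ from comparing roots. Two points you make explicit are left tacit in the paper: the $S_n$-equivariance of the identification $\hatKf{f}\cong\hatKf{f'}$, which ensures $\rho_j$, $\pi_j$ and the Galois action agree for both forms, and an elementary normalization argument that can replace Hilbert~90.
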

\begin{proof}
  To simplify notation, we let
  $\hat K \colonequals \SnC{K}{\QQ} = \hatKf{f} = \hatKf{f'}$.

  The assumption $\hatKfm{f}{1} = \hatKfm{f'}{1}$ in particular implies
  $\pi_j(\rho_j(\hatKfm{f}{1})) = \pi_j(\rho_j(\hatKfm{f'}{1}))$ for
  $j=4,\dots,n$. By \Cref{lem:proj-to-dim2}, it follows that
  $(P_{f,1},P_{f,2};P_{f,3},P_{f,j}) = (P_{f,1}',P_{f,2}';P_{f,3}',P_{f,j}')$
  for $j=4,\dots,n$. Then, \Cref{lem:cross-ratio-map} shows that there is a
  matrix $\bar \gamma \in\PGL_2(\hat K)$ sending $P_{f,i}$ to $P_{f',i}$ for all $i$.

Then,
\[
  \tau(\bar\gamma) P_i = \tau(\bar\gamma) \tau\bigl(P_{\tau^{-1}(i)}\bigr) = \tau\bigl(\bar\gamma
  P_{\tau^{-1}(i)}\bigr) = \tau\bigl(P_{\tau^{-1}(i)}'\bigr) = P_i' = \bar\gamma P_i
\]
for all $\tau\in S_n=\Gal(\hat K|\QQ)$ and all $i$, in particular for
$i=1,2,3$. Since $\PGL_2(\hat K)$ acts freely on the set of triples of
(distinct) points in $\PP^1(\hat K)$, it follows that
$\tau(\bar \gamma)=\bar \gamma$ in $\PGL_2(\hat K)$ for all $\tau$. The long exact sequence in
Galois cohomology arising from the short exact sequence
\[
1\rightarrow\hat K^\times\rightarrow\GL_2(\hat K)\rightarrow\PGL_2(\hat K)\rightarrow1
\]
together with Hilbert's Theorem~90 shows that every
$\Gal(\hat K|\QQ)$-invariant element $\bar \gamma$ of $\PGL_2(\hat K)$ has a
$\Gal(\hat K|\QQ)$-invariant representative in $\GL_2(\hat K)$, i.e., a
representative $\gamma\in\GL_2(\QQ)$. Of course, this representative also sends
$P_i$ to $P_i'$. Then, \Cref{eq:concrete-factorization} implies
$\lambda f^{\gamma^{-1}} = f'$ for some $\lambda\in\QQ^\times$.
\end{proof}

Combining \Cref{lem:deg1-usually-equal,lem:U-equal-then-gl}, we immediately obtain:

\begin{proposition}
\label{thm:generically-injective}
For any $n\geq3$, there is a subset $V_n''(\ZZ)$ of $V_n'(\ZZ)$ of density $1$
such that any two elements of $V_n''(\ZZ)$ giving rise to the same number field
are $G(\QQ)$-equivalent.
\end{proposition}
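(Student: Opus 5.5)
The plan is to take $V_n''(\ZZ)$ to be exactly the density-one subset produced by \Cref{lem:deg1-usually-equal}. Then I check that for this set the hypotheses of \Cref{lem:U-equal-then-gl} hold for any pair of forms giving the same field.

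First, let $f,f'\in V_n''(\ZZ)$ with $K_f\cong K_{f'}$, and fix an isomorphism $\phi\colon K_f\stackrel\sim\to K_{f'}$. By functoriality of the $S_n$-closure, $\phi$ induces an isomorphism $\hat\phi\colon \hatKf{f}=\SnC{K_f}{\QQ}\stackrel\sim\to\SnC{K_{f'}}{\QQ}=\hatKf{f'}$. Through $\hat\phi$ we identify both with a common field $\hat K=\SnC{K}{\QQ}$, where $K\colonequals K_{f'}$. This is a field of degree $n!$ with Galois group $S_n$, because $f,f'\in V_n'(\ZZ)$.

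Next, I apply the ``more precisely'' clause of \Cref{lem:deg1-usually-equal} with $d=1$. It says that $\hat\phi$ carries $\hatKfm{f}{1}$ onto $\hatKfm{f'}{1}$. So, viewed inside $\hat K$, the two subspaces coincide, which is exactly the hypothesis $\hatKfm{f}{1}=\hatKfm{f'}{1}$ of \Cref{lem:U-equal-then-gl}. That lemma then gives $\lambda\in\QQ^\times$ and $\gamma\in\GL_2(\QQ)$ with $\lambda f^\gamma=f'$, i.e.\ $f$ and $f'$ are $G(\QQ)$-equivalent.

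The only point needing care, and the main potential obstacle, is compatibility of the identifications. \Cref{lem:U-equal-then-gl} compares the roots $P_{f,i}$ and $P_{f',i}$ inside one field $\hat K$, together with the $S_n$-action on it, and the projections $\rho_j,\pi_j$ use this labelled $S_n$-action. So I would check that $\hat\phi$ is $S_n$-equivariant. This follows because the $S_n$-action on $\SnC{K}{\QQ}=K^{\otimes n}/I$ permutes tensor factors and $\hat\phi=\phi^{\otimes n}$ commutes with that permutation. With this, the Galois-descent step in \Cref{lem:U-equal-then-gl} goes through verbatim. The case $n=3$ needs no extra argument: the cross-ratio condition is vacuous, and \Cref{lem:cross-ratio-map} still supplies $\bar\gamma$.
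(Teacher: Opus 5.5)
Your proposal is correct and is essentially the paper's proof: take $V_n''(\ZZ)$ from \Cref{lem:deg1-usually-equal}, apply it with $d=1$, and conclude with \Cref{lem:U-equal-then-gl}. Your extra check is sound: $\hat\phi=\phi^{\otimes n}$ is $S_n$-equivariant, so $\rho_j$, $\pi_j$ and the Galois action are the same for $f$ and $f'$. The paper uses this fact implicitly.
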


The set $V_n'(\ZZ)$ only involves primitive forms, but we can also lift the
above result to $V_n(\ZZ)$.

\begin{theorem}\label{cor:generically-injective}
  For any $n\geq 3$, there is a subset $V_n'''(\ZZ)$ of $V_n(\ZZ)$ of density
  $1$ such that any two elements of $V_n'''(\ZZ)$ giving rise to the same
  number field are $G(\QQ)$-equivalent.
\end{theorem}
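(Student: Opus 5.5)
The plan is to define $V_n'''(\ZZ)$ as the set of all nonzero integer multiples of elements of $V_n''(\ZZ)$:
\[
V_n'''(\ZZ) \colonequals \{ c\,g : c\in\ZZ\setminus\{0\},\ g\in V_n''(\ZZ)\},
\]
where $V_n''(\ZZ)\subseteq V_n'(\ZZ)$ is the density-$1$ set from \Cref{thm:generically-injective}. The equivalence property is then immediate. Suppose $f=cg$ and $f'=c'g'$ lie in $V_n'''(\ZZ)$ and give the same number field. Then $K_f\cong K_g$ and $K_{f'}\cong K_{g'}$. This is because $R_{cg}\otimes\QQ\cong R_g\otimes\QQ$: scaling by $c\in\QQ^\times$ is the action of $(c,1)\in G(\QQ)$, and $G(\QQ)$-equivalent forms give isomorphic $\QQ$-algebras. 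So $g,g'\in V_n''(\ZZ)$ give the same field and are $G(\QQ)$-equivalent by \Cref{thm:generically-injective}. Hence $f$ and $f'$ are $G(\QQ)$-equivalent as well.

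The real content is the density statement. Since $V_n'''(\ZZ)\supseteq V_n''(\ZZ)$, it suffices to show that its complement has density $0$. I would decompose each nonzero $f\in V_n(\ZZ)$ uniquely as $f=c\,g$ with $c\geq1$ the content of $f$ and $g$ primitive (the sign is absorbed into $g$). Then $f\notin V_n'''(\ZZ)$ implies $g\notin V_n''(\ZZ)$, since $g$ is primitive and $V_n''(\ZZ)$ consists of primitive forms. Let $P(X)$ be the number of primitive forms of height $\leq X$ and $E(X)$ the number of primitive forms of height $\leq X$ outside $V_n''(\ZZ)$. Then
\[
N(V_n(\ZZ)\setminus V_n'''(\ZZ),X) \leq 1+\sum_{c\geq1} E(X/c).
\]

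To bound this sum, I will use two facts. First, $P(X)\asymp X^{n+1}$, with primitive forms of positive density $1/\zeta(n+1)$. Second, $V_n''(\ZZ)$ has density $1$ in $V_n'(\ZZ)$, which by \Cref{lem:usually-full-galois} has density $1$ among primitive forms. Together these give $E(Y)=o(Y^{n+1})$. Fix $\eta>0$ and choose $Y_0$ with $E(Y)\leq\eta Y^{n+1}$ for $Y\geq Y_0$. The terms with $X/c\geq Y_0$ contribute at most $\eta X^{n+1}\zeta(n+1)$. For the terms with $X/c<Y_0$, only $c\leq X$ matter, and each contributes $E(X/c)\leq P(Y_0)=O_{Y_0}(1)$, so these give $O_{Y_0}(X)=o(X^{n+1})$. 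Since $\eta$ is arbitrary, the complement has density $0$ in $V_n(\ZZ)$.

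The main point needing care is the tail summation over contents $c$, and the dominated-convergence-type argument above handles it. I would also note the trivial conventions: the zero form is excluded, and "same number field" only makes sense for irreducible forms. Multiples of forms in $V_n''(\ZZ)$ are irreducible, since irreducibility over $\QQ$ is unchanged by scaling.
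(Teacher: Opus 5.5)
Your proposal is correct and matches the paper's proof: the paper defines $V_n'''(\ZZ)$ exactly as the nonzero integer multiples of elements of $V_n''(\ZZ)$, derives the equivalence property from invariance under scaling, and proves density $1$ by decomposing forms according to their content. The only difference is bookkeeping: the paper lower-bounds the density of $V_n'''(\ZZ)$ by $\sum_{k\le m} k^{-(n+1)}/\zeta(n+1)$ and lets $m\to\infty$, whereas you upper-bound the complement by $\sum_{c} E(X/c)$ using a tail estimate, and both arguments are valid.
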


\begin{proof}
  By \Cref{thm:generically-injective} and \Cref{lem:usually-full-galois}, the
  set $V_n''(\ZZ)$ of \Cref{thm:generically-injective} has density $1$ among
  the set of primitive binary $n$-ic forms.

  Now let $V_n'''(\ZZ)$ be the set of binary $n$-ic forms that are equal to a
  nonzero integer times an element of $V_n''(\ZZ)$. Since, given two forms,
  whether they generate the same number field is invaiant under multiplying
  either form by a nonzero rational number, and the same is true for
  $G(\QQ)$-equivalence, any two elements of $V_n'''(\ZZ)$ giving rise to the
  same number field are $G(\QQ)$-equivalent.

  For each positive integer $m$, the density of $V_n'''(\ZZ)$ is at least the
  sum over $k$ from $1$ to $m$ of the density of elements of $V_n'''(\ZZ)$
  whose coefficients have greatest common divisor $k$. Elements of
  $V_n'''(\ZZ)$ whose coefficients have greatest common divisor $k$ are exactly
  the elements of $V_n''(\ZZ)$ times $k$, so their density is the density of
  $V_n''(\ZZ)$ times $k^{-(n+1)}$. The density of $V_n''(\ZZ)$ in $V_n(\ZZ)$ is
  equal to the density of primitive binary $n$-ic forms in $V_n(\ZZ)$ (since
  $V_n'(\ZZ)$ has density $1$ in binary $n$-ic forms), which is
  $\zeta(n+1)^{-1}$. Thus the density of $V_n'''(\ZZ)$ is at least
  $$ \frac{ \sum_{k=1}^m k^{-(n+1)}}{\zeta(n+1)}$$ and this converges to $1$ as
  $m$ goes to $\infty$ since the denominator converges to $\zeta(n+1)$, so the density
  of $V_n'''(\ZZ)$ is $1$. \end{proof}

\section{Counting equivalence classes of forms}
\label{sec:counting}
In this section we will prove the following theorem.

\begin{theorem}
\label{thm:counting-equivalence-classes}
For any $n\geq3$, there is a constant $C_n > 0$ such that the number $N(X)$ of
$G(\QQ)$-equivalence classes of binary $n$-ic forms containing a representative
$f\in V_n(\ZZ)$ with $\Ht(f) \leq X$ satisfies $N(X) \sim C_n\cdot X^{n+1}$ for $X\rightarrow\infty$.
\end{theorem}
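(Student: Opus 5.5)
The plan is to count each $G(\QQ)$-orbit once by choosing a canonical representative. For $f\in V_n(\ZZ)$ with $\Disc(f)\neq0$ and $n\geq3$, the stabilizer of $f$ in $G(\QQ)$ is finite. The integral forms in the orbit of $f$ of height $\leq X$ form a finite set; I will call an orbit \emph{visible} if this set is nonempty. Write $N(X)=\sum_{f}w_X(f)$, where the sum runs over $f\in V_n(\ZZ)$ with $\Ht(f)\leq X$ and
\[
w_X(f)\colonequals \#\{g\in G(\QQ)f\cap V_n(\ZZ):\Ht(g)\leq X\}^{-1}.
\]
The reducible and degenerate forms contribute $o(X^{n+1})$ by \Cref{lem:usually-large-disc} and \Cref{lem:usually-full-galois}, with the zero-discriminant locus being a proper subvariety. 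So it suffices to show that $\frac{1}{X^{n+1}}\sum_f w_X(f)$ converges to a positive limit.

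Step 1 describes, for a primitive $f$, the set $\{(\lambda,\gamma)\in G(\QQ): \lambda f^\gamma\in V_n(\ZZ)\}$. Up to scaling, the integral forms in the orbit correspond to elements $\gamma\in\GL_2(\QQ)$, and we may normalize to $\gamma\in M_2(\ZZ)$ primitive, modulo $\GL_2(\ZZ)$ on the appropriate side. For each such $\gamma$ there is a unique $\lambda$ making $\lambda f^\gamma$ primitive integral. The ratio $\lambda$ versus $\det\gamma$ is governed by local conditions at the primes dividing $\det\gamma$: essentially, $f$ has a root of high multiplicity modulo $p^k$ along the sublattice defined by $\gamma$.

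The key point is a local density computation. For a fixed ``type'' of transformation, i.e.\ a double coset $\GL_2(\ZZ)\gamma\GL_2(\ZZ)$ with $\det\gamma=m$ together with the rescaling $\lambda$, the set of $f$ that it maps to an integral form is a union of residue classes modulo a power of $m$. On that set the map multiplies heights by a factor that is roughly $|\lambda|\cdot\|\gamma\|^n$, and this distorts the real region $\{\Ht\leq X\}$ by a fixed semialgebraic transformation.

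Step 2 uses this to write
\[
\sum_f w_X(f)=\sum_f 1-\sum(\text{overcount corrections}).
\]
Alternatively, and more cleanly, I will express $N(X)$ as $X^{n+1}$ times a product of local weighted volumes. Let $\mathcal{R}_\infty$ be a fundamental domain for $\GL_2(\ZZ)$-orbits. Each orbit has a well-defined ``minimal height'' representative in $V_n(\ZZ)$, obtained via the real Julia reduction together with the $p$-adic minimization of Step 1, and $N(X)$ counts orbits whose minimal height is $\leq X$. The set of $f$ that are \emph{height-minimal in their orbit} is defined by an infinite sequence of conditions indexed by the transformation types $(m,\lambda)$. Each type removes a set of $f$ equal to a congruence condition modulo $m^{O(1)}$ intersected with a scaling-invariant semialgebraic real region. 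Truncating at $m\leq M$ gives a set whose count is $c_M X^{n+1}+o(X^{n+1})$ by \cite{Davenport-1951} combined with the Chinese remainder theorem. The limit $c_M\to C_n$ exists by monotonicity, and $C_n>0$ because the minimal set contains, for example, all forms that are $p$-minimal at every prime and lie in a small real region where $f$ is strictly height-minimal under $\GL_2(\RR)$-scalings near the identity.

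The main obstacle is the tail: I must show that forms which are non-minimal only because of a type with $m>M$ contribute $\leq \eta(M)X^{n+1}$ with $\eta(M)\to0$, uniformly in $X$. To be non-minimal via $\gamma$ with $\det\gamma=m$ requires $f$, reduced modulo the primes dividing $m$, to have a root of multiplicity roughly $\geq n/2$ with suitable $p$-adic depth. This forces $p^{k}\mid\Disc(f)$ with $k$ growing in the $p$-adic valuation of $m$. I would therefore bound the tail by the number of $f$ with $a^2\mid\Disc(f)$ for some $a\gg m^{\delta}$, i.e.\ by \Cref{lem:usually-almost-squarefree} for the set $C(s)$ with $s\to\infty$. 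For small $m$ but a large archimedean distortion, which cannot occur for fixed $m$ except on a region of small volume, I would use \Cref{lem:usually-large-disc} instead: a large archimedean distortion combined with a height decrease forces $|\Disc(f)|$ to be small relative to $\Ht(f)^{2n-2}$. Combining these two bounds gives the uniform tail estimate and hence $N(X)\sim C_nX^{n+1}$.
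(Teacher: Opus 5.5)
Your skeleton matches the paper's: count height-minimal representatives, truncate to finitely many transformations, count the truncated set by Davenport plus congruences, and absorb the error into $B(\varepsilon)\cup C(s)$. But the lemma that makes this work is missing, and the mechanism you give for the tail is wrong.

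You claim that non-minimality via $\gamma$ with $\det\gamma=m$ forces a large power of the primes dividing $m$ into $\Disc(f)$. This is false. Take $f=mX\,x^n+x^{n-1}y+y^n$, $\gamma=\mathrm{diag}(1,m)$ and $\lambda=1/m$. Then $\lambda f^\gamma=X x^n+x^{n-1}y+m^{n-1}y^n$ is integral of height $X<\Ht(f)$ whenever $m^{n-1}\leq X$. Yet $f\equiv y(x^{n-1}+y^{n-1})$ is separable modulo every $p\mid m$ with $p\nmid n-1$. (This $f$ lies in $B(\varepsilon)$, not $C(s)$.)

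The $p$-adic side only controls the \emph{denominator of} $\lambda\det\gamma$. For primitive integral $\gamma$ one has $v_p(\lambda\det\gamma)+2\lfloor v_p(\Disc f)/2\rfloor\geq 0$, so $|\lambda\det\gamma|\geq s^{-2}$ for $f\notin C(s)$. The size of $\gamma$, and hence of $m\leq 2|\gamma|^2$, has to be controlled archimedeanly. Whenever $\Ht(\lambda f^\gamma)\leq\Ht(f)$, one has $|\lambda|\,|\gamma|^{n-2}|\det\gamma|\,|\Disc(f)|\ll\Ht(f)^{2n-2}$. Only the combination of the two bounds gives $|\gamma|^{n-2}\ll\varepsilon^{-1}s^2$ off $B(\varepsilon)\cup C(s)$. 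This is exactly where $n\geq3$ is used. Your tail argument never uses $n\geq 3$, which should be a warning sign: for $n=2$ the number of classes is $\asymp X^2$, not $X^3$.

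The same missing lemma undermines your main term. Truncating at $\det\gamma\leq M$ does not give finitely many transformations; the type $m=1$ alone is all of $\GL_2(\ZZ)$. So your truncated minimality set is an infinite intersection of conditions, and Davenport's theorem does not apply to it.

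What you need is this: for $f\notin B(\varepsilon)\cup C(s)$, only a fixed finite set $G'(\varepsilon,s)\subset G(\QQ)$ can send $f$ to an integral form of no larger height. Then minimality with respect to $G'(\varepsilon,s)$ is a finite union of cones intersected with congruence classes, and it differs from true minimality only on $B(\varepsilon)\cup C(s)$.

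Your positivity argument also fails as stated. Local height-minimality under $\GL_2(\RR)$ near the identity, plus a $p$-adic condition, does not stop far-away elements (for example large elements of $\GL_2(\ZZ)$) from lowering the height. With the finiteness lemma, positivity is immediate. Each class has at most $|G'(\varepsilon,s)|$ members outside $B(\varepsilon)\cup C(s)$, so
$N(X)\,|G'(\varepsilon,s)|\geq N(V_n(\ZZ),X)-N(B(\varepsilon),X)-N(C(s),X)$.
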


As a consequence of this result, we will obtain \Cref{thm:main} for $n > 2$.

\begin{corollary}\label{thm:counting-fields-large-degree}
  For any $n\geq3$, there is a constant $C_n>0$ such that the number of
  (isomorphism classes of) degree $n$ number fields with $\PP^1$-height bounded
  by $X$ is
  \[N_n(\Ht < X) \colonequals \#\{K : [K:\QQ]=n,\,\Ht(K) \leq X\} \sim C_n \cdot X^{n+1}
\]
as $X\to\infty$.

\end{corollary}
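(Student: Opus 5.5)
We deduce the corollary from \Cref{thm:counting-equivalence-classes} and \Cref{cor:generically-injective}. Let $\mathcal E(X)$ be the set of $G(\QQ)$-equivalence classes containing a representative $f\in V_n(\ZZ)$ with $\Ht(f)\leq X$, so $\#\mathcal E(X)=N(X)\sim C_nX^{n+1}$. Consider the map $\Phi_X$ from the classes in $\mathcal E(X)$ consisting of irreducible forms to isomorphism classes of degree $n$ fields, sending a class to $K_f$; this is well defined because equivalent forms give isomorphic algebras. By definition of the $\PP^1$-height, the image of $\Phi_X$ is exactly the set of fields with $\Ht(K)\leq X$. Hence $N_n(\Ht\leq X)\leq N(X)$, which gives the upper bound $\limsup N_n(\Ht\le X)/X^{n+1}\leq C_n$. (The difference between $<X$ and $\leq X$ is harmless for an asymptotic of this shape, since $C_n(X-1)^{n+1}\sim C_nX^{n+1}$.)

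For the lower bound we need to show that $\Phi_X$ is injective outside $o(X^{n+1})$ classes. Call a class in $\mathcal E(X)$ \emph{bad} if it is reducible, or if there is another class in $\mathcal E(X)$ with the same field. If a class is bad and its field has a second class, then for any representative $f$ of height at most $X$ there is a $g$ of height at most $X$ with $K_g\cong K_f$ and $g$ not equivalent to $f$. By \Cref{cor:generically-injective}, at least one of $f$ or $g$ lies outside $V_n'''(\ZZ)$.

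The key step is to bound the number of classes that are bad in this sense. Note that it is not enough to know only that the representative $f$ itself lies outside $V_n'''$, because $f$ might lie in $V_n'''$ while its partner $g$ does not. So we argue per field. Among the classes mapping to a given field, at most one can contain a representative of height at most $X$ lying in $V_n'''(\ZZ)$, since any two such representatives are equivalent. Every other class in that fiber has \emph{all} of its height-$\leq X$ representatives outside $V_n'''(\ZZ)$, and in particular has at least one such representative. This gives
\[
\#\mathcal E(X)-\#\operatorname{im}\Phi_X\leq \#\{\text{reducible classes}\}+N(V_n(\ZZ)\setminus V_n'''(\ZZ),X).
\]
The second term is $o(X^{n+1})$ because $V_n'''$ has density $1$. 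The first term is $o(X^{n+1})$ by Hilbert irreducibility (\Cref{lem:usually-full-galois}), after accounting for imprimitive multiples of forms as in the proof of \Cref{cor:generically-injective}.

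We therefore get $N_n(\Ht\leq X)\geq N(X)-o(X^{n+1})\sim C_nX^{n+1}$. The main obstacle is really \Cref{thm:counting-equivalence-classes} itself, which we are assuming. Within the corollary, the only delicate point is the per-fiber argument just described, which charges each non-injective class to one of its own bad representatives.
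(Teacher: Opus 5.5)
Your proposal is correct and is essentially the paper's argument. The upper bound comes from mapping classes onto fields; for the lower bound, classes with a height-$\leq X$ representative in the generic set map injectively to fields, and every other class is charged to a non-generic form. The only difference is bookkeeping: the paper charges each class via its minimal (hence primitive) representative against $V_n''(\ZZ)$, whereas you use an arbitrary height-$\leq X$ representative against $V_n'''(\ZZ)$. Your separate count of reducible classes is harmless but unnecessary, since $V_n'''(\ZZ)$ consists of irreducible forms and so that term is already absorbed.
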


\begin{proof}[Proof of \Cref{thm:counting-fields-large-degree}]
  Denote the set of (isomorphism classes of) degree $n$ number fields by
  $\FF_n$. Let $C_n$ as in \Cref{thm:counting-equivalence-classes}. Since to
  every $K\in\FF_n$ with $\Ht(K)\leq X$ corresponds at least one equivalence class
  of forms containing a representative of height at most $X$, this theorem
  immediately shows the upper bound $N_n(\Ht < X) \leq C_nX^{n+1} + o(X^{n+1})$.

  For the lower bound, note that by \Cref{thm:counting-equivalence-classes},
  there are $C_n X^{n+1} + o(X^{n+1})$ pairwise non-$G(\QQ)$-equivalent
  primitive binary $n$-ic forms of height at most $X$. (Clearly, any minimal
  height representative of an equivalence class is primitive.) By
  \Cref{lem:usually-full-galois}, only $o(X^{n+1})$ of these primitive forms do
  not lie in $V_n'(\ZZ)$. Only $o(X^{n+1})$ of the remaining ones do not lie in
  the set $V_n''(\ZZ)$ constructed in \Cref{thm:generically-injective}. By
  \Cref{thm:generically-injective}, the $C_nX^{n+1}+o(X^{n+1})$ remaining
  (pairwise non-$G(\QQ)$-equivalent) forms in $V_n''(\ZZ)$ correspond to
  distinct number fields. Hence, $N_n(\Ht < X) \geq C_nX^{n+1} + o(X^{n+1})$.
\end{proof}

\subsection{Proof of
  \texorpdfstring{\Cref{thm:counting-equivalence-classes}}{Theorem 7.1}} We
will prove \Cref{thm:counting-equivalence-classes} using a sieve over elements
of $G(\QQ)$. For the tail estimate, we will have to prove that for ``most''
elements $f\in V_n(\ZZ)$ (namely those not in $B(\varepsilon)\cup C(s)$), only a specific
finite set of elements $[(\lambda,\gamma)]\in G(\QQ)$ can send $f$ to an element
$\lambda f^\gamma$ of $V_n(\ZZ)$ with smaller height (\Cref{lem:finitely-many-matrices}).
For this purpose, we prove a generic upper bound on the ``absolute value'' of
$[(\lambda,\gamma)]$ in \Cref{lem:absolute-value-bound}, and a generic upper bound on the
``denominator'' of $[(\lambda,\gamma)]$ in \Cref{lem:denominator-bound}.

We denote by $|\gamma|$ the largest absolute value of an entry of $\gamma\in\GL_2(\RR)$.

\begin{lemma}
\label{lem:absolute-value-bound}
Let $f\in V_n(\RR)$ and $[(\lambda,\gamma)]\in G(\RR)$. Assume that
$\Ht(\lambda f^\gamma), \Ht(f)\leq X$. Then,
\[
|\lambda||\gamma|^{n-2}|{\det(\gamma)}|\cdot|\Disc(f)| \ll X^{2(n-1)},
\]
with the constant only depending on $n$.
\end{lemma}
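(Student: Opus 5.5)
The proof uses the discriminant's relative invariance together with a lower bound on the discriminant that detects the skewness of $\gamma$. Write $g=\lambda f^\gamma$, so that
\[
\Disc(g)=\lambda^{2n-2}\det(\gamma)^{n(n-1)}\Disc(f).
\]
Since the statement only involves the class $[(\lambda,\gamma)]\in G(\RR)$, we may first check that the quantity $|\lambda||\gamma|^{n-2}|\det\gamma|$ is invariant under the torus $T(\RR)$. Indeed, $(t^{-n}\lambda,t\gamma)$ gives $|t|^{-n}|t|^{n-2}|t|^{2}=1$, so we may normalize freely. We will also use the $\mathrm{SO}_2(\RR)$-invariance up to constants of $\Ht$. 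Both $\Ht(\cdot)$ and $\|\cdot\|_2$ on $V_n(\RR)$ are comparable, and $f\mapsto f^k$ for $k\in \mathrm{O}_2(\RR)$ preserves a suitable norm up to a constant depending on $n$. Using the Cartan decomposition $\gamma=k_1\,\mathrm{diag}(a,b)\,k_2$ with $a\ge b>0$ (up to sign), we can replace $f$ by $f^{k_1}$ and $g$ by $g^{k_2^{-1}}$. This changes heights by $O_n(1)$ and reduces us to $\gamma=\mathrm{diag}(a,b)$. Then $|\gamma|\asymp a$ and $|\det\gamma|=ab$, and we must show $|\lambda|a^{n-1}b\,|\Disc(f)|\ll X^{2n-2}$.

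For diagonal $\gamma$, the coefficients of $g$ are $g_i=\lambda a^{n-i}b^i f_i$. The key step is a bound on $\Disc(f)$ that exploits the hypothesis $\Ht(g)\le X$, which forces $|f_i|\le X/(|\lambda| a^{n-i}b^{i})$. Combined with $|f_i|\le X$, this gives $|f_i|\le X\min(1,\,c\,(b/a)^{-i})$ where $c=1/(|\lambda|a^n)$. So the coefficients of $f$ are constrained by a weight profile, and we need to bound $\Disc$, a homogeneous polynomial of degree $2n-2$ which is isobaric of weight $n(n-1)$, on this box. Each monomial $\prod f_i^{e_i}$ in $\Disc$ satisfies $\sum e_i=2n-2$ and $\sum i e_i=n(n-1)$. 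Bounding $|f_i|\le X\,c^{\theta}(a/b)^{i\theta}$ for any $\theta\in[0,1]$ (by interpolating the two bounds) gives monomial size at most $X^{2n-2}c^{(2n-2)\theta}(a/b)^{n(n-1)\theta}$. The choice $\theta=1/2$ yields $X^{2n-2}\,c^{n-1}(a/b)^{n(n-1)/2}$, which is the symmetric estimate $|\Disc(f)|\,|\Disc(g)/\Disc(f)|^{1/2}\ll X^{2n-2}$, i.e.\ $|\lambda|^{n-1}(ab)^{n(n-1)/2}|\Disc f|\ll X^{2n-2}$. This is not yet the claimed bound, which is sharper when $a\gg b$.

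The main obstacle, then, is to obtain the extra skewness gain. I would try to get it from the structure of $\Disc$ rather than from isobaricity alone. The idea is that if $f$ is nearly divisible by $x^2$ or $y^2$ in the weighted sense, $\Disc(f)$ is small. Concretely, use roots: write $f=\prod(\alpha_ix-\beta_iy)$, so $\Disc f=\prod_{i<j}(\alpha_i\beta_j-\alpha_j\beta_i)^2$. The height conditions on $f$ and on $g$ say that the roots of $f$, viewed in $\PP^1$, have Mahler-measure-type size $\ll X$ both before and after rescaling by $\mathrm{diag}(a,b)$. Rescaling by a large factor $a/b$ would clump the roots of $g$ near $0$ or $\infty$, which costs a factor in the discriminant. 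I would therefore bound $|\Disc f|$ by an arrangement of roots: at most one root can be ``balanced'' for both $f$ and $g$, and each of the others lies near $0$ or near $\infty$ relative to the scale. Optimizing over how many roots go to each side should give $|\Disc f|\ll X^{2n-2}/(|\lambda|a^{n-1}b)$. The extreme configuration has $n-1$ roots on one side, which matches the exponent $n-2$ on $|\gamma|$. Making this root-clustering estimate rigorous, via Mahler's inequality $|\Disc f|\le n^n M(f)^{2n-2}$ applied separately to the factors near $0$ and near $\infty$ together with the cross-terms, is where I expect the real work to lie.
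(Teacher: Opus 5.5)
Your reduction to $\gamma=\mathrm{diag}(a,b)$ with $a\ge b>0$ is fine; the paper does the same via singular value decomposition. The problem is that the lemma's actual content is never proved. You note yourself that the $\theta$-interpolation gives the wrong bound, and the Mahler-measure root-clustering step is only a plan (``should give''). So nothing in the proposal establishes $|\lambda|a^{n-1}b\,|\Disc(f)|\ll X^{2n-2}$.

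Your monomial framework also cannot be rescued using only homogeneity and isobaricity once $n\ge4$. For $n=4$, the monomial $f_2^6$ has degree $6$ and weight $12$. In the regime $|\lambda|a^2b^2\le1\ll|\lambda|a^3b$, your coefficient constraints allow $|f_2|\asymp X$ (take $f=Xx^2y^2$). So such a monomial would only be bounded by $X^6$, not by the required $X^6/(|\lambda|a^3b)$. Some specific vanishing property of $\Disc$ is therefore essential.

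The missing idea is the identity
\[
\Disc(f)=f_0\,A(f_0,\dots,f_n)+f_1^2\,B(f_0,\dots,f_n),
\]
with $A,B$ homogeneous of degrees $2n-3$ and $2n-4$. One quick way to see it: setting $f_0=0$ writes $f=y\,h$ with $h(1,0)=f_1$, and $\Disc(yh)=\pm\operatorname{Res}(y,h)^2\Disc(h)=\pm f_1^2\Disc(h)$. The paper instead reads it off the Sylvester matrix of the two partial derivatives. This is exactly your ``nearly divisible by $y^2$'' intuition made algebraic.

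With this identity the lemma is immediate. For $\gamma=\mathrm{diag}(a,b)$ with $a\ge b$, the first two coefficients of $\lambda f^\gamma$ are $\lambda a^nf_0$ and $\lambda a^{n-1}bf_1$. These give
\[
|f_0|\,|\lambda|a^{n-1}b\le|f_0|\,|\lambda|a^n\le X
\quad\text{and}\quad
|f_1|\,|\lambda|a^{n-1}b\le X.
\]
Bounding $A$, $B$ and the second factor of $f_1$ via $\Ht(f)\le X$ yields
\[
|\Disc(f)|\ll X^{2n-3}\max(|f_0|,|f_1|)\ll \frac{X^{2n-2}}{|\lambda|a^{n-1}b}.
\]
In your own language: every monomial of $\Disc$ contains $f_0$ or $f_1^2$. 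Bound that single factor ($f_0$, or one copy of $f_1$) using the height of $\lambda f^\gamma$, and bound all remaining factors by $X$, rather than using a uniform $\theta$. No root clustering or Mahler measure is needed.
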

\begin{proof}
  Replacing $\gamma$ by $\alpha\gamma\beta$ and $f$ by $f^{\alpha^{-1}}$ for appropriate orthogonal
  matrices $\alpha,\beta\in\operatorname{SO}_2(\RR)$, we can make $\gamma$ a diagonal matrix
  (singular value decomposition), say
  $\gamma = \left(\begin{smallmatrix}g_1\\&g_2\end{smallmatrix}\right)$ with
  $|g_1|\geq|g_2|$ and therefore $|\gamma|=|g_1|$. This substitution doesn't change
  $\lambda$, $\det(\gamma)$, or $\Disc(f)$. Since $\operatorname{SO}_2(\RR)$ is compact,
  it changes $|\gamma|$ and $\Ht(f)$ and $\Ht(\lambda f^\gamma)$ by bounded factors. Hence,
  after the substitution, we still have $\Ht(\lambda f^\gamma),\Ht(f) \ll X$ and it still
  suffices to show $|\lambda||\gamma|^{n-2}|{\det(\gamma)}|\cdot|\Disc(f)| \ll X^{2(n-1)}$.

The binary $n$-ic form
\[
  f=f_0x^n + f_1x^{n-1}y + \cdots + f_n y^n
\]
is sent to
\[
  \lambda f^\gamma = f_0\lambda g_1^nx^n + f_1\lambda g_1^{n-1}g_2 x^{n-1}y + \cdots + f_n\lambda g_2^ny^n.
\]
Looking at the first two coefficients, we see that
\begin{equation}
\label{eq:upper-bound-f0}
|f_0||\lambda||\gamma|^{n-2}|{\det(\gamma)}| = |f_0\lambda g_1^{n-1}g_2| \leq |f_0\lambda g_1^n| \leq \Ht(\lambda f^\gamma) \ll X
\end{equation}
and
\begin{equation}
\label{eq:upper-bound-f1}
|f_1||\lambda||\gamma|^{n-2}|{\det(\gamma)}| = |f_1\lambda g_1^{n-1}g_2| \leq \Ht(\lambda f^\gamma) \ll X.
\end{equation}
The discriminant of $f$ is a polynomial in the coefficients $f_0,\dots,f_n$ of
the form
\begin{equation}
\label{eq:disc-first-two-coeffs}
\Disc(f) = f_0 A(f_0,\dots,f_n) + f_1^2 B(f_0,\dots,f_n)
\end{equation}
with homogeneous polynomials $A,B\in\ZZ[F_0,\dots,F_n]$ of degrees $2n-3$ and
$2n-4$, respectively. (This can be seen for example by writing
$\pm n^{n-2}\Disc(f)$ as the determinant of the Sylvester matrix for the forms
$\frac{\partial}{\partial X}f(X,Y)$ and $\frac{\partial}{\partial Y} f(X,Y)$ and expanding the determinant
and using that all coefficients of the determinant polynomial are integers; see
\cite[Chapter~12, Equation~(1.31)]{GelfandKapranovZelevinsky}.) Hence,
\[
  |\Disc(f)| \ll \max(|f_0|,|f_1|)\Ht(f)^{2n-3}
  \stackrel{(\ref{eq:upper-bound-f0}), (\ref{eq:upper-bound-f1})} \ll
  \frac{X^{2n-2}}{|\lambda||\gamma|^{n-2}|{\det(\gamma)}|}. \qedhere
\]
\end{proof}

\begin{lemma}
\label{lem:denominator-bound}
Let $p$ be a prime number and let $f\in V_n(\ZZ_p)$ and
$[(\lambda,\gamma)]\in G(\QQ_p)$. Assume that $\lambda f^\gamma$ lies in
$V_n(\ZZ_p)$ and that the entries of $\gamma$ lie in $\ZZ_p$, but are not all
divisible by $p$. Then,
\[
v_p(\lambda\det(\gamma)) + 2\left\lfloor\frac{v_p(\Disc(f))}{2}\right\rfloor \geq 0.
\]
\end{lemma}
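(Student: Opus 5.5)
The plan is to reduce to a diagonal $\gamma$ using the Smith normal form over $\ZZ_p$, and then to read off divisibility of the first two coefficients of $f$. The discriminant decomposition \eqref{eq:disc-first-two-coeffs} from the proof of \Cref{lem:absolute-value-bound} then finishes the argument.

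\emph{Reduction.} Since $\gamma$ has entries in $\ZZ_p$ that are not all divisible by $p$, its elementary divisors are $1$ and $p^m$ for some $m\geq 0$. So we can write $\gamma = k_1\left(\begin{smallmatrix}1\\&p^m\end{smallmatrix}\right)k_2$ with $k_1,k_2\in\GL_2(\ZZ_p)$, and $v_p(\det\gamma)=m$. Replace $f$ by $f^{k_1}$, which is still in $V_n(\ZZ_p)$, and $\lambda f^\gamma$ by $(\lambda f^\gamma)^{k_2^{-1}}$, which is still integral. These substitutions change $\Disc(f)$ and $\det(\gamma)$ only by $p$-adic units, and they leave $\lambda$ unchanged. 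We may therefore assume $\gamma=\left(\begin{smallmatrix}1\\&p^m\end{smallmatrix}\right)$. Then
\[
\lambda f^\gamma=[\lambda f_0,\lambda p^m f_1,\dots,\lambda p^{nm}f_n],
\]
and the hypothesis becomes $v_p(f_i)\geq -v_p(\lambda)-im$ for all $i$.

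\emph{Case analysis.} Put $e\colonequals -(v_p(\lambda)+m)$. If $e\leq 0$, the claim is immediate because $v_p(\Disc f)\geq 0$. So suppose $e>0$; we must show $2\lfloor v_p(\Disc f)/2\rfloor\geq e$.
\begin{itemize}
\item If $m=0$, then $v_p(f_i)\geq -v_p(\lambda)=e$ for every $i$. By homogeneity of $\Disc$ of degree $2n-2$, this gives $v_p(\Disc f)\geq (2n-2)e\geq 2e$.
\item If $m\geq 1$, then $v_p(f_0)\geq -v_p(\lambda)=e+m\geq e+1$ and $v_p(f_1)\geq -v_p(\lambda)-m=e$. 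Using $\Disc(f)=f_0A+f_1^2B$ with $A,B$ having $\ZZ_p$-integral values at $f$, we get $v_p(\Disc f)\geq\min(e+1,2e)=e+1$.
\end{itemize}

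\emph{Conclusion.} In either case $v_p(\Disc f)\geq e+1$. Then $2\lfloor (e+1)/2\rfloor\geq e$, since this equals $e$ for even $e$ and $e+1$ for odd $e$. This gives the inequality.

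The main point needing care is the reduction step: we must check that integrality of both forms and the valuations of $\Disc(f)$ and $\lambda\det\gamma$ are invariant under the $\GL_2(\ZZ_p)$ substitutions. This follows from the relative invariance formula for $\Disc$ together with the fact that $\GL_2(\ZZ_p)$ preserves $V_n(\ZZ_p)$. A secondary point is that the floor in the statement is handled exactly by the extra $+1$ coming from $m\geq 1$, which is why the case $m=0$ has to be treated separately.
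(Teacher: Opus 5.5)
Your proof is correct and follows essentially the same route as the paper: a Smith normal form reduction to $\gamma=\left(\begin{smallmatrix}1\\&p^m\end{smallmatrix}\right)$, the integrality constraints on $f_0$ and $f_1$, and the decomposition $\Disc(f)=f_0A+f_1^2B$. The only difference is how the cases are organized. You split on $m=0$ versus $m\geq1$, using homogeneity of $\Disc$ when $m=0$. The paper instead splits on which term realizes $\min(v_p(f_0),2v_p(f_1))$, and uses the same homogeneity argument to rule out the one bad subcase, which occurs exactly when $\gamma\in\GL_2(\ZZ_p)$.
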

\begin{proof}
  Replacing $\gamma$ by $\alpha\gamma\beta$ and $f$ by $f^{\alpha^{-1}}$ for appropriate matrices
  $\alpha,\beta\in\GL_2(\ZZ_p)$, we can make $\gamma$ a diagonal matrix whose diagonal entries
  are powers of $p$ (Smith normal form), which by the assumptions on $\gamma$ means
  $\gamma = \left(\begin{smallmatrix}1\\&\det(\gamma)\end{smallmatrix}\right)$. This
  substitution does not change any of the assumptions or any of the valuations.

  The binary $n$-ic form
\[
  f=f_0x^n + f_1x^{n-1}y + \cdots + f_n y^n
\]
is sent to
\[
  \lambda f^\gamma = f_0\lambda x^n + f_1\lambda\det(\gamma) x^{n-1}y + \cdots + f_n\lambda \det(\gamma)^ny^n.
\]
Looking at the first two coefficients, we see that $\lambda f^\gamma\in V_n(\ZZ_p)$ implies
\[
  v_p(f_0) + v_p(\lambda) \geq 0
\]
and
\[
v_p(f_1)+v_p(\lambda)+v_p(\det(\gamma))\geq0.
\]
By (\ref{eq:disc-first-two-coeffs}) together with $f\in V_n(\ZZ_p)$, we have
\[
v_p(\Disc(f)) \geq \min(v_p(f_0), 2v_p(f_1)),
\]
so
\[
  \textnormal{(a)}\quad v_p(\Disc(f)) + v_p(\lambda) \geq 0 \qquad\textnormal{or}\qquad
  \textnormal{(b)}\quad \left\lfloor\frac{v_p(\Disc(f))}{2}\right\rfloor + v_p(\lambda) +
  v_p(\det(\gamma)) \geq 0.
\]

In case (a), the claim follows immediately unless $v_p(\Disc(f))$ is odd,
$v_p(\lambda)=-v_p(\Disc(f))$, and $v_p(\det(\gamma))=0$. But then $\lambda$ would lie in
$p^{-v_p(\Disc(f))}\ZZ_p^\times$ and $\gamma$ would lie in $\GL_2(\ZZ_p)$, so
$\lambda f^\gamma\in V_n(\ZZ_p)$ would imply
$f\in p^{v_p(\Disc(f))}V_n(\ZZ_p)$, which would mean that
$v_p(\Disc(f)) \geq v_p(\Disc(f))\cdot2(n-1) > v_p(\Disc(f))$.

In case (b), the claim follows immediately.
\end{proof}

\begin{lemma}
\label{lem:finitely-many-matrices}
Assume $n\geq3$. For every $\varepsilon>0$ and every integer $s\geq1$, there is a finite subset
$G'(\varepsilon,s)$ of $G(\QQ)$ such that for all
$f\in V_n(\ZZ)\setminus(B(\varepsilon)\cup C(s))$, the following holds: if
$[(\lambda,\gamma)]\in G(\QQ)$ with $\lambda f^\gamma\in V_n(\ZZ)$ and
$\Ht(\lambda f^\gamma)\leq \Ht(f)$, then $[(\lambda,\gamma)]\in G'(\varepsilon,s)$.
\end{lemma}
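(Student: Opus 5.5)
We normalize $[(\lambda,\gamma)]$ so that $\gamma$ has integer entries that are coprime, which is possible since we may rescale $\gamma$ by $t\in\QQ^\times$ and $\lambda$ by $t^{-n}$. We then bound $\gamma$ and $\lambda$ separately.

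First we show that the denominator of $\lambda\det(\gamma)$ is bounded. For each prime $p$, apply \Cref{lem:denominator-bound}. Since $f\notin C(s)$, every square $a^2$ dividing $\Disc(f)$ satisfies $a\leq s$. Hence $\prod_p p^{2\lfloor v_p(\Disc f)/2\rfloor}\leq s^2$, which gives
\[
v_p(\lambda\det(\gamma))\geq -2\lfloor v_p(\Disc(f))/2\rfloor .
\]
Consequently $\lambda\det(\gamma)=u/v$ with $u,v$ coprime integers and $v\leq s^2$.

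Next we bound sizes using \Cref{lem:absolute-value-bound} with $X=\Ht(f)$. Since $f\notin B(\varepsilon)$, we have $|\Disc(f)|\geq\varepsilon\Ht(f)^{2(n-1)}$, so
\[
|\lambda||\gamma|^{n-2}|\det\gamma|\ll \varepsilon^{-1}.
\]
Write $|\lambda\det\gamma|\geq 1/v\geq s^{-2}$, which holds because $u\neq0$. This gives $|\gamma|^{n-2}\ll \varepsilon^{-1}s^2$. Since $n\geq3$, we get $|\gamma|\ll_{\varepsilon,s}1$, so $\gamma$ ranges over a finite set of integer matrices. The hypothesis $n\geq 3$ enters exactly here.

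It remains to bound $\lambda$. We have $|\lambda\det\gamma|\ll \varepsilon^{-1}|\gamma|^{2-n}\leq\varepsilon^{-1}$, and $\det\gamma$ is a nonzero integer of bounded size. So $\lambda\det\gamma$ is a rational number with denominator at most $s^2$ and absolute value $\ll\varepsilon^{-1}$, and there are finitely many such. Hence $\lambda$ lies in a finite set. We take $G'(\varepsilon,s)$ to be the image of all such pairs $(\lambda,\gamma)$.

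The main point to check is that the hypotheses of \Cref{lem:denominator-bound} hold: $\lambda f^\gamma\in V_n(\ZZ_p)$ for every $p$, and $\gamma$ is $p$-primitive, which follows from the normalization. One must also verify that \Cref{lem:absolute-value-bound} applies with $\Ht(\lambda f^\gamma)\leq\Ht(f)=X$, which it does. A further subtlety is that $\Disc(f)\neq0$, which holds because $f\notin B(\varepsilon)$. This ensures $\lambda$, $\det\gamma\neq0$ and that the valuations are finite.
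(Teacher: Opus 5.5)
Your proposal is correct and follows the paper's proof essentially step for step. You normalize $\gamma$ to a primitive integer matrix, use \Cref{lem:denominator-bound} with $f\notin C(s)$ to get $\lambda\det(\gamma)a^2\in\ZZ$ with $a\le s$, combine this with \Cref{lem:absolute-value-bound} and $f\notin B(\varepsilon)$ to get $|\gamma|^{n-2}\ll\varepsilon^{-1}s^2$, and then bound $\lambda$ using the bounded denominator and size of $\lambda\det(\gamma)$. (A tiny nitpick: $\Disc(f)\neq 0$ is really guaranteed by $f\notin C(s)$ rather than by $f\notin B(\varepsilon)$, since $f=0$ does not lie in $B(\varepsilon)$ but does lie in $C(s)$.)
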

(As we have seen in \Cref{thm:quadratic-case},
\Cref{thm:counting-fields-large-degree} is wrong for $n=2$. This lemma is the
only place in the paper where we seriously use the assumption $n\geq3$.)
\begin{proof}
  Multiplying by an appropriate element of
  $T(\QQ) = \{(t^{-n},\left(\begin{smallmatrix}t\\&t\end{smallmatrix}\right))\mid
  t\in \QQ^\times\}$, we can make the entries of $\gamma$ relatively prime integers. Let
  $a\geq1$ be the largest integer such that $a^2\mid\Disc(f)$. By assumption,
  $a\leq s$ as $f\notin C(s)$. \Cref{lem:denominator-bound} implies
  $\lambda\det(\gamma)a^2 \in \ZZ$ and therefore
  $|\lambda\det(\gamma)|\geq a^{-2} \geq s^{-2}$. The assumption
  $|\Disc(f)|\geq\varepsilon\Ht(f)^{2(n-1)}$ as $f\notin B(\varepsilon)$ together with
  \Cref{lem:absolute-value-bound} for $X = \Ht(f)$ implies
  $|\lambda\det(\gamma)||\gamma|^{n-2}\ll\varepsilon^{-1}$ and therefore
  $|\gamma|^{n-2} \ll \varepsilon^{-1} s^2$. Since $n\geq3$, this leaves only finitely many options
  for the integer matrix $\gamma$. For any choice of $\gamma$, the conditions
  $\lambda\det(\gamma)a^2\in\ZZ$ with $1\leq a\leq s$ and
  $|\lambda\det(\gamma)||\gamma|^{n-2}\ll\varepsilon^{-1}$ leave only finitely many options for the
  rational number $\lambda$.
\end{proof}

\begin{corollary}\label{cor:equiv-class-size-bound}
  Any $G(\QQ)$-equivalence class of binary forms contains at most
  $|G'(\varepsilon,s)|$ representatives $f\in V_n(\ZZ)\setminus(B(\varepsilon)\cup C(s))$.
\end{corollary}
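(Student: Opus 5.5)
Fix a $G(\QQ)$-equivalence class $\mathcal{E}$ and suppose it contains at least one representative in $V_n(\ZZ)\setminus(B(\varepsilon)\cup C(s))$. Among all such representatives, choose one, $f$, of minimal height. The plan is to show that every other representative $g\in\mathcal{E}\cap(V_n(\ZZ)\setminus(B(\varepsilon)\cup C(s)))$ can be written as $g=\lambda f^\gamma$ with $[(\lambda,\gamma)]$ ranging over the fixed finite set $G'(\varepsilon,s)$, with the roles of $f$ and $g$ chosen appropriately. It will then follow that the number of such $g$ is at most $|G'(\varepsilon,s)|$.

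The key step is to apply \Cref{lem:finitely-many-matrices} to the representative of \emph{larger} height, since the lemma controls the group elements that decrease height starting from a form outside $B(\varepsilon)\cup C(s)$. Concretely, let $g$ be any representative in $V_n(\ZZ)\setminus(B(\varepsilon)\cup C(s))$. Since $g$ lies in the class of $f$, we can write $f=\lambda g^\gamma$ for some $[(\lambda,\gamma)]\in G(\QQ)$. The form $\lambda g^\gamma=f$ lies in $V_n(\ZZ)$, and minimality gives $\Ht(f)\leq\Ht(g)$. \Cref{lem:finitely-many-matrices}, applied to $g$, therefore gives $[(\lambda,\gamma)]\in G'(\varepsilon,s)$. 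Consequently $g=f\cdot[(\lambda,\gamma)]^{-1}$, and $g$ is determined by $f$ together with an element of the finite set $G'(\varepsilon,s)$. This defines an injection from the set of such representatives $g$ into $G'(\varepsilon,s)$, sending $g$ to the unique element $[(\lambda,\gamma)]$ with $\lambda g^\gamma=f$.

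Two points need checking, and neither is substantial. First, the group element is well defined: $G(\QQ)$ acts on forms, and distinct $g$ produce distinct elements $[(\lambda,\gamma)]$, because $g$ is recovered from the element by acting with its inverse on $f$. Second, the class $\mathcal{E}$ consists of forms over $\QQ$, and only its integral members are counted. Minimality is taken only over representatives in the specified subset, which is fine because the height comparison is all the lemma requires. There is essentially no obstacle beyond this bookkeeping of which form plays the role of the one outside $B(\varepsilon)\cup C(s)$, and here both $f$ and $g$ are outside that set, so the argument goes through. If the class has no representative outside $B(\varepsilon)\cup C(s)$, the bound holds trivially.
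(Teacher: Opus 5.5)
Your argument is correct and is essentially the paper's. The paper applies \Cref{lem:finitely-many-matrices} to a representative $f$ and observes that every representative of height at most $\Ht(f)$ has the form $\lambda f^\gamma$ with $[(\lambda,\gamma)]\in G'(\varepsilon,s)$; you instead anchor at a minimal-height good representative and apply the lemma to each taller one. This is the same injection into $G'(\varepsilon,s)$ read in the other direction, and it avoids the paper's implicit ``take the tallest of any $|G'(\varepsilon,s)|+1$ representatives'' step.

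One small correction: the element $[(\lambda,\gamma)]$ with $\lambda g^\gamma=f$ need not be unique, since the stabilizer of $g$ in $G(\QQ)$ can be nontrivial. You should choose one such element for each $g$. Injectivity is unaffected, because $g$ is recovered from the chosen element.
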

\begin{proof}
  The lemma shows that for any representative
  $f\in V_n(\ZZ)\setminus(B(\varepsilon)\cup C(s))$, there are at most
  $|G'(\varepsilon,s)|$ representatives
  $f'\in V_n(\ZZ)\setminus(B(\varepsilon)\cup C(s))$ with $\Ht(f') \leq \Ht(f)$.
\end{proof}

\begin{proof}[Proof of \Cref{thm:counting-equivalence-classes}]
  Order the elements of $V_n(\RR)$ by height, breaking ties lexicographically.
  This defines a total order $\leq$ on the set $V_n(\RR)$ and a well-order on
  $V_n(\ZZ)$. Let
\[
  A \coloneq \{f\in V_n(\ZZ) : \lambda f^\gamma\notin V_n(\ZZ)\textnormal{ or }\lambda f^\gamma \geq f \textnormal{
    for all }[(\lambda,\gamma)]\in G(\QQ)\},
\]
the set of forms $f\in V_n(\ZZ)$ which are minimal in their equivalence class. By
definition, $N(X) = N(A, X)$.

Let $\varepsilon>0$ and $s\geq1$. By \Cref{lem:finitely-many-matrices}, we have
$A \subseteq A'(\varepsilon,s)$ and $A'(\varepsilon,s)\setminus A \subseteq B(\varepsilon) \cup C(s)$, where
\[
  A'(\varepsilon,s) \coloneq \{f\in V_n(\ZZ) : \lambda f^\gamma\notin V_n(\ZZ)\textnormal{ or }\lambda f^\gamma \geq f
  \textnormal{ for all }[(\lambda,\gamma)]\in G'(\varepsilon,s)\}.
\]
In particular,
\[
N(A'(\varepsilon,s),X) - N(B(\varepsilon),X) - N(C(s),X) \leq N(A, X) \leq N(A'(\varepsilon,s),X).
\]

The set $A'(\varepsilon,s)$ is a finite union of polyhedral cones intersected with
congruence classes. Hence, there is a number $\alpha(\varepsilon,s)\geq0$ such that
\[
  N(A'(\varepsilon,s), X) = \alpha(\varepsilon,s) X^{n+1} + O_{\varepsilon,s}(X^n) \qquad\textnormal{for }X\rightarrow\infty.
\]
Recall \Cref{lem:usually-large-disc,lem:usually-almost-squarefree}:
\[
  N(B(\varepsilon), X) = \beta(\varepsilon)X^{n+1} + O_\varepsilon(X^n) \qquad\textnormal{for }X\rightarrow\infty
  \qquad\textnormal{with}\qquad\lim_{\varepsilon\to0} \beta(\varepsilon)=0,
\]
\[
  N(C(s), X) \leq \gamma(s)X^{n+1} + o_s(X^{n+1}) \qquad\textnormal{for }X\rightarrow\infty
  \qquad\textnormal{with}\qquad\lim_{s\to\infty} \gamma(s)=0.
\]

Combining the above facts and letting $X\rightarrow\infty$ and then
$\varepsilon\rightarrow0$ and $s\rightarrow\infty$, we conclude that
\[
N(X) = N(A,X) = C_n X^{n+1} + o(X^{n+1})
\qquad\textnormal{with}\qquad C_n = \lim_{\substack{\varepsilon\to0\\s\to\infty}} \alpha(\varepsilon,s) \geq0.
\]

To prove $C_n>0$, we use that any equivalence class in $V_n(\ZZ)$ has at most
$|G'(\varepsilon,s)|$ representatives
$f\in V_n(\ZZ)\setminus(B(\varepsilon)\cup C(s))$ by \Cref{cor:equiv-class-size-bound}. Hence,
\[
N(X) \cdot |G'(\varepsilon,s)|
\geq N(V_n(\ZZ), X) - N(B(\varepsilon), X) - N(C(s), X).
\]
Clearly,
\[
N(V_n(\ZZ), X) = (2X)^{n+1} + O(X^n)
\qquad\textnormal{for }X\rightarrow\infty.
\]
Picking $\varepsilon$ sufficiently small and $s$ sufficiently large so that
$\beta(\varepsilon) + \gamma(s) < 2^{n+1}$, and then letting $X\rightarrow\infty$, we conclude that
\[
C_n \geq \frac{2^{n+1} - \beta(\varepsilon) - \gamma(s)}{|G'(\varepsilon,s)|} > 0.
\qedhere
\]
\end{proof}

\begin{proof}[Proof of \Cref{thm:abstract}]
  Let $W(X)$ be the set of $f\in V_n(\ZZ)$ with $\Ht(f)\leq X$ for which there is
  some $g\in V_n(\ZZ)$ with $\Ht(g)\leq X$ such that $K_f \cong K_g$ although
  $f$ and $g$ are not $G(\QQ)$-equivalent. We need to show
  $|W(X)| = o(X^{n+1})$.

  We first show that with $V_n'''(\ZZ)$ as in \Cref{cor:generically-injective},
  we have
\begin{equation}\label{eq:w-containment}
W(X) \subseteq \bigcup_{\substack{g\in V_n(\ZZ)\setminus V_n'''(\ZZ):\\\Ht(g)\leq X}} \{ f \in V_n(\ZZ) : \Ht(f)\leq X \textnormal{ and } K_f \cong K_g \}.
\end{equation}
Indeed, take any $f, g \in V_n(\ZZ)$ with $\Ht(f),\Ht(g)\leq X$ satisfying
$K_f \cong K_g$. If $f$ does not lie in the set on the right-hand side, then both
$f$ and $g$ must lie in $V_n'''(\ZZ)$, so by \Cref{cor:generically-injective},
they must be $G(\QQ)$-equivalent. This shows \Cref{eq:w-containment}.

For any $\varepsilon>0$ and $s\geq1$, letting
\[
R(\varepsilon, s, g) \colonequals
\{ f \in V_n'''(\ZZ) \setminus (B(\varepsilon)\cup C(s)) : K_f \cong K_g \},
\]
we can bound the size of the set on the right-hand side of
\Cref{eq:w-containment} as follows:
\[
|W(X)|
\leq N(V_n(\ZZ) \setminus V_n'''(\ZZ), X) + N(B(\varepsilon), X) + N(C(s), X) + \sum_{\substack{g \in V_n(\ZZ)\setminus V_n'''(\ZZ):\\\Ht(g) \leq X}} |R(\varepsilon, s, g)|.
\]
Any two elements $f,f'$ of the same set $R(\varepsilon,s,g)$ are
$G(\QQ)$-equivalent by \Cref{cor:generically-injective} since they lie in
$V_n'''(\ZZ)$ and satisfy $K_f\cong K_{f'}$. By \Cref{cor:equiv-class-size-bound},
it follows that each of the sets $R(\varepsilon, s, g)$ has size at most
$|G'(\varepsilon, s)|$. Combining this with \Cref{cor:generically-injective} and
\Cref{lem:usually-large-disc,lem:usually-almost-squarefree}, we obtain the
upper bound
\[
|W(X)| \leq o(X^{n+1}) + (\beta(\varepsilon) X^{n+1} + O_\varepsilon(X^n)) + (\gamma(s) X^{n+1} + o_s(X^{n+1})) + o(X^{n+1})\cdot |G'(\varepsilon,s)|.
\]
Letting $X\to\infty$ and then $\varepsilon\to0$ and $s\to\infty$, we conclude that $|W(X)| = o(X^{n+1})$ as claimed.
\end{proof}

\section*{Acknowledgments}
This work was started as part of a workshop on ``Nilpotent counting problems in
arithmetic statistics'' hosted by the American Institute of Mathematics. We
thank the workshop organizers, Brandon Alberts, Yuan Liu, and Melanie Matchett
Wood, for bringing us together to work on this project. We would also like to
thank Henryk Iwaniec for help with locating \cite{Chelluri2004}.

\appendix

\bibliographystyle{amsalpha}
\providecommand{\bysame}{\leavevmode\hbox to3em{\hrulefill}\thinspace}
\providecommand{\MR}{\relax\ifhmode\unskip\space\fi MR }
\providecommand{\MRhref}[2]{%
  \href{http://www.ams.org/mathscinet-getitem?mr=#1}{#2}
}
\providecommand{\href}[2]{#2}

\end{document}